\numberwithin{equation}{section}
\theoremstyle{plain}
\newtheorem{theorem}{Theorem}[section]
\newtheorem{corollary}[theorem]{Corollary}
\newtheorem{lemma}[theorem]{Lemma}
\newtheorem{proposition}[theorem]{Proposition}
\theoremstyle{definition}
\newtheorem{definition}[theorem]{Definition}
\newtheorem{remark}[theorem]{Remark}
\theoremstyle{remark}
\newcommand{\A}{\mathbb{A}}
\newcommand{\R}{\mathbb{R}}
\newcommand{\Q}{\mathbb{Q}}
\newcommand{\Z}{\mathbb{Z}}
\newcommand{\C}{\mathbb{C}}
\newcommand{\stab}{\operatorname{stab}}
\newcommand{\Hom}{\operatorname{Hom}}
\newcommand{\Aut}{\operatorname{Aut}}
\newcommand{\spec}{\operatorname{spec}}
\newcommand{\Ker}{\operatorname{Ker}}
\newcommand{\Img}{\operatorname{Im}}
\newcommand{\Cent}{\operatorname{Cent}}
\newcommand{\vol}{\operatorname{vol}}
\newcommand{\der}{\operatorname{der}}
\newcommand{\tr}{\operatorname{tr}}
\newcommand{\unip}{\operatorname{unip}}
\newcommand{\ssc}{\operatorname{sc}}
\newcommand{\Gal}{\operatorname{Gal}}
\newcommand{\reg}{\operatorname{reg}}
\newcommand{\cusp}{\operatorname{cusp}}
\newcommand{\disc}{\operatorname{disc}}
\newcommand{\cont}{\operatorname{cont}}
\newcommand{\temp}{\operatorname{temp}}
\newcommand{\fin}{\operatorname{fin}}
\newcommand{\geo}{\operatorname{geo}}
\newcommand{\elll}{\operatorname{ell}}
\newcommand{\Int}{\operatorname{Int}}
\newcommand{\dimm}{\operatorname{dim}}
\newcommand{\eexp}{\operatorname{exp}}
\newcommand{\image}{\operatorname{Image}}
\newcommand{\ram}{\operatorname{ram}}
\newcommand{\Out}{\operatorname{Out}}
\newcommand{\sss}{\operatorname{ss}}
\newcommand{\ac}{\operatorname{ac}}
\newcommand{\de}{\operatorname{det}}
\begin{document}

\title[ Multiplicity formula and stable trace formula ]{Multiplicity formula and stable trace formula }
\author[]{}
\dedicatory{}

\address{}
\email{}

\thanks{}

\date{\today}
\author[Peng Zhifeng ]{ Peng Zhifeng}
\address{ Department of Mathematics, National University of Singapore}
\email{matpeng@nus.edu.sg}

\maketitle

\begin{abstract}
Let $G$ be a connected reductive group over $\mathbb{Q}$. In this paper, we give the stabilization of the local trace formula. In particular, we construct the explicit form of the spectral side of the stable local trace formula in the Archimedean case, when one component of the test function is cuspidal. We also give the multiplicity formula for discrete series. At the same time, we obtain the stable version of $L^{2}$-Lefschetz number formula.

\end{abstract}

\maketitle


\section{Introduction}
\label{sect: introduction}
  Suppose that $G$ is a connected reductive group over $\mathbb{Q}$, and $\Gamma$ is an arithmetic subgroup of $G(\mathbb{R})$ defined by congruence conditions. Consider the regular representation $R$ with $G(\mathbb{R})$ acting on $L^{2}(\Gamma \backslash G(\mathbb{R}))$ through the right translation. The fundamental problem is to decompose $R$ into a direct sum of irreducible representations. In general, we decompose $R$ into two parts

                            \[R=R_{\disc}\oplus R_{\cont},\]
where $R_{\disc}$ is the sum of discrete series, and $R_{\cont}$ is the continuous spectrum. The continuous spectrum can be understood by Eisenstein series, which was studied by Langlands \cite{L3}. It suffices to study $R_{\disc}$. If $\pi_{\R}\in R_{\disc}$ is an irreducible representation, we denote $R_{\disc}(\pi_{\R})$ for the $\pi_{\R}$-isotypical subspace of $R_{\disc}$. Then

                    \[R_{\disc}(\pi_{\R})=\pi_{\R}^{\oplus m_{\disc}(\pi_{\R})},\]
where $m_{\disc}(\pi_{\R})$ is the multiplicity. A classical problem is to find a finite summation formula for $m_{\disc}(\pi_{\R})$.

\bigskip
   If $\pi_{\R}$ belongs to the square integrable discrete series, and $\Gamma\backslash G(\R)$ is compact, then Langlands \cite{L1} gave a formula for $m_{\disc}(\pi_{\mathbb{R}})$. If $\Gamma\backslash G(\R)$ is noncompact, the first result is for $G(\R)=SL_{2}(\mathbb{R})$, the formula for $m_{\disc}(\pi_{\mathbb{R}})$ appeared in Selberg's paper \cite{S}. For $G$ having $\mathbb{R}$-rank one, there is a formula for it in \cite{OW}. In general, for $G$ having any $\R$-rank, Arthur \cite{A3} studied the sum of multiplicities

    \begin{equation}
    \sum_{\pi_{\R}\in\Pi_{\disc}(\mu)}m_{\disc}(\pi_{\mathbb{R}},K_{0})
    \end{equation}
    by using the invariant trace formula, where the $L$-packet $\Pi_{\disc}(\mu)$ is a finite set of discrete series representations of $G(\R)$ with the same infinitesimal character $\mu$, and $K_{0}$ is an open compact subgroup of the finite adelic group $G(\mathbb{A}_{\fin})$. More generally, we can consider Hecke operators $h$ on $L^{2}(\Gamma\backslash G)$ that commute with $R$, and write $R_{\disc}(\pi_{\R},h)$ for the restriction of $h$ to $R_{\disc}(\pi_{\R})$. Arthur \cite{A3} obtained a formula for

    \begin{equation}\label{eq:sumt}
    \sum_{\pi_{\R}\in \Pi_{\disc}(\mu)}\tr(R_{\disc}(\pi_{\R},h)),
    \end{equation}
    under a weak regularity condition on $\mu$. The spectral side of invariant trace formula corresponds to \eqref{eq:sumt}, if the test function is taken to be a stable cuspidal function $f_{\mu}$ associated to $\mu$. Therefore, the explicit formula for \eqref{eq:sumt} follows from the geometric side of invariant trace formula. A key point is that the invariant distribution $I_{M}(\gamma,f_{\mu})$ vanishes, if $\gamma$ is not semisimple.

\bigskip
We shall give a formula for the multiplicity of single representation $m_{\disc}(\pi_{\R})$ and \[\tr(R_{\disc}(\pi_{\R},h)),\] which was conjectured by Spallone and Wakatsuki \cite[Conjecture 1]{Sp}, who also had checked two special cases. When one tries to use the invariant trace formula to obtain the multiplicity formula, with the test function being the pseudo-coefficient $f_{\pi_{\R}}$ for a single representation $\pi_{\R}$, the invariant distribution $I_{M}(\gamma,f_{\pi_{\R}})$ in general does not vanish for $\gamma$ having nontrivial unipotent part. Now we do not know how to obtain an explicit formula for $I_{M}(\gamma,f_{\pi_{\R}})$ in this case. To capture a single representation and obtain the stable cuspidal function, we need to use the endoscopy theory and the stable trace formula.

\bigskip
  Fortunately, when $G$ is a $K$-group, Arthur \cite{A9}, \cite{A10}, \cite{A11} has obtained the stabilization of the general trace formula in 2003, assuming the Fundamental Lemma and the the weighted Fundamental Lemma. In 2008, Ngo \cite{N} proved the Fundamental Lemma, and the weighted Fundamental Lemma was proved by Chaudouard-Laumon. As a result, we have an unconditional stabilization of global and local formal trace formula. From this we obtain the following proposition by combining with the splitting formula.
  \begin{proposition}
For any $h\in\mathcal{H}(G(\A_{\fin})),$  and $\pi_{\R}\in \Pi_{\disc}(\mu),$ with the infinitesimal character $\mu$ being regular, we have
\begin{equation}
\begin{split}
\tr(R_{\disc}(\pi_{\R},h))&=I(f_{\pi_{\R}}h) \\
&=\sum_{G^{'}\in\mathcal{E}_{\elll}(G)}\iota(G,G^{'})\sum_{M^{'}\in\mathcal{L}^{G^{'}}}|W^{M^{'}}_{0}||W^{G^{'}}_{0}|^{-1}\sum_{\delta\in\Delta(M^{\prime},V,\zeta)}b^{M^{\prime}}(\delta)S^{G^{'}}_{M^{'}}(\delta,f_{\pi_{\R}})(h_{M})^{M^{'}}(\delta)
\end{split}
\end{equation}
\end{proposition}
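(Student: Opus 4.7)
The plan is to assemble three ingredients: the defining property of the Arthur pseudo-coefficient $f_{\pi_{\R}}$, the (now unconditional) stabilization of the invariant trace formula, and the splitting formula for weighted stable distributions. The two equalities in the proposition are logically independent and I would handle them separately.

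First, to establish $\tr(R_{\disc}(\pi_{\R},h))=I(f_{\pi_{\R}}h)$, I would expand the right-hand side via Arthur's spectral side of the invariant trace formula applied to the test function $f_{\pi_{\R}}\otimes h$. The pseudo-coefficient satisfies $\tr\sigma(f_{\pi_{\R}})=\delta_{\sigma,\pi_{\R}}$ on tempered irreducible representations, vanishes on proper parabolically induced representations, and is stable cuspidal in Arthur's sense. Under the regularity hypothesis on $\mu$, Langlands's classification forces every representation contributing to the spectral side with infinitesimal character $\mu$ to be a discrete series, so both the continuous and residual contributions vanish. What remains is exactly $\tr(R_{\disc}(\pi_{\R},h))$.

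Second, I would apply Arthur's stabilization of the invariant trace formula, which is unconditional by Ngô's Fundamental Lemma and the Chaudouard--Laumon weighted Fundamental Lemma. This yields
\begin{equation*}
I(f_{\pi_{\R}}h)=\sum_{G'\in\calE_{\elll}(G)}\iota(G,G')\,S^{G'}\bigl((f_{\pi_{\R}}h)^{G'}\bigr),
\end{equation*}
where $(f_{\pi_{\R}}h)^{G'}$ denotes the Langlands--Shelstad--Kottwitz endoscopic transfer. Expanding each $S^{G'}$ by the geometric side of the stable trace formula on $G'$ produces the double sum over $M'\in\calL^{G'}$ and $\delta\in\Delta(M',V,\zeta)$ with coefficients $|W_{0}^{M'}||W_{0}^{G'}|^{-1}b^{M'}(\delta)$ and factors $S^{G'}_{M'}(\delta,(f_{\pi_{\R}}h)^{G'})$.

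Third, I would invoke the splitting formula for the stable weighted distribution $S^{G'}_{M'}$ relative to the decomposition of the test function into its Archimedean and finite parts. Because $f_{\pi_{\R}}$ is stable cuspidal at the Archimedean place, all cross-terms in the splitting collapse, leaving only the clean product
\begin{equation*}
S^{G'}_{M'}\bigl(\delta,(f_{\pi_{\R}}h)^{G'}\bigr)=S^{G'}_{M'}(\delta,f_{\pi_{\R}})\,(h_{M})^{M'}(\delta),
\end{equation*}
where $h_{M}$ is the constant-term descent of $h$ to $M(\A_{\fin})$ and $(h_{M})^{M'}$ is its endoscopic transfer. Substituting this into the expansion of the previous step gives the asserted identity.

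The main obstacle is the careful application of the splitting formula in this mixed Archimedean--finite setting: one must verify that the cuspidality of $f_{\pi_{\R}}$ indeed annihilates every non-product term produced by the splitting, and one must identify the surviving finite factor precisely as the endoscopic transfer $(h_{M})^{M'}(\delta)$ of the descent rather than as some less tractable weighted orbital integral on $G'$. The vanishing of non-discrete spectral contributions under the regularity of $\mu$ is a secondary point, but it is standard once one knows $f_{\pi_{\R}}$ is stable cuspidal.
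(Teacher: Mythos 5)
Your plan --- (i) use cuspidality of $f_{\pi_{\R}}$ to collapse the spectral side of the invariant trace formula to the discrete spectrum and read off $\tr(R_{\disc}(\pi_{\R},h))$, (ii) apply Arthur's unconditional stabilization to pass to the endoscopic geometric expansion, (iii) apply the stable splitting formula to factor the Archimedean from the finite part --- is exactly the route the paper takes, and each step is implemented the same way.

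One assertion, however, needs to be corrected: you twice describe $f_{\pi_{\R}}$ as ``stable cuspidal in Arthur's sense,'' first when arguing that the spectral side collapses and again when justifying the collapse of the cross terms in the splitting formula. This is not the case, and it is directly at odds with the paper's own motivating remark that $f_{\pi_{\R}}$ is not a stable function. The pseudo-coefficient of a \emph{single} member of a discrete series $L$-packet is cuspidal but not stable: its trace is not constant on $L$-packets, and its orbital integrals are not determined by stable orbital integrals alone. Indeed, the entire reason the paper resorts to the stable trace formula and endoscopy --- rather than simply invoking Arthur's explicit geometric expansion for stable cuspidal test functions from \cite{A3} --- is precisely that $f_{\pi_{\R}}$ fails to be stable. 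What \emph{is} stable cuspidal is the Langlands--Shelstad transfer $f^{G'}_{\pi_{\R}}$ to each endoscopic group $G'$, a fact the paper exploits only in later sections. Fortunately your argument survives the mislabeling: both the vanishing of the non-discrete spectral contributions and the collapse of the cross terms $e^{G}_{M}(L_{1},L_{2})$ in the splitting formula use only the vanishing of the constant-term descents $(f_{\pi_{\R}})_{L}$ for proper Levi subgroups $L$, which is ordinary cuspidality. But the terminology should be fixed, since taking it at face value would render the paper's endoscopic apparatus unnecessary.
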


\bigskip
 See section \ref{sec:gs} for more details. The stable coefficients $b^{M^{\prime}}(\delta)$ and the stable distributions $S^{G^{\prime}}_{M^{\prime}}(\delta,f_{\pi_{\R}})$ are not explicit. However, the transfer function $f^{G^{\prime}}_{\pi_{\R}}$ is a stable cuspidal function for endoscopic group $G^{\prime}$ of $G$, we obtain the vanishing of  $S^{G^{\prime}}_{M^{\prime}}(\delta,f_{\pi_{\R}})$, if $\delta$ is not semisimple. It remains to study this distribution in the case of semisimple elements of $G^{\prime}(\mathbb{R})$. We shall give the explicit formula for the stable distribution $S^{G^{\prime}}_{M^{\prime}}(\delta,f_{\pi_{\R}})$ through comparison of the stable local trace formula with the stable Weyl integral formula. We need to stabilize directly the spectral side of the local trace formula in the Archimedean place.

\bigskip
 In the $p$-adic case, when the test function is cuspidal at two places, Arthur \cite{A7} gave a concrete formula about the geometric side of stable local trace formula, which is just an inner product. We shall establish a formula in the Archimedean case, with the test function being cuspidal at only one place, so it is more complicated. It suffices to study the stable distribution on quasisplit groups. Our work relies on the harmonic analysis on reductive groups of Harish-Chandra \cite{Harish-Chandra1}, \cite{Harish-Chandra2}, \cite{Harish-Chandra3}, and on the Langlands \cite{L2} classification of the irreducible representations of real algebraic groups. We also need the work of Shelstad \cite{S3}, \cite{S4}, who classified the tempered representations, directly constructed the spectral transfer factors, and gave the inverse adjoint relations. For the stable local trace formula, we need to build the transfer factors with respect to the virtual tempered representations $\tau=(M,\pi,r)$ and the Langlands parameters. The main obstruction is to show that the representation theoretic $R$-groups $R_{\pi}$ and the endoscopic $R$-groups are compatible. This is the essential part that allows us to stabilize the local trace formula. We will also build the general inverse adjoint relations, and obtain an explicit formula for the spectral side of the stable local trace formula, c.f. Section $6$.

 \bigskip
\begin{theorem}
If $f = f_1 \times \bar{f_2}$, $f_1 \in C_{\cusp}(G(\R),\zeta)$, and $f_2 \in C(G(\R),\zeta)$, then
\begin{align*}
I_{\disc}(f) =&\int_{T_{\elll}(G,\zeta)} i^{G}(\tau)f_{1,G}(\tau)\overline{f_{2,G}(\tau)} d\tau  \\
=&\sum_{G'\in\mathcal{E}_{\elll}(G)} \iota(G, G') \widehat{S}^{G'}(f'),
\end{align*}
 and $\widehat{S}^{G'}(f')$ is a stable distribution on $G^{\prime}$,  where
 \begin{align*} i^{G}(\tau) =& |d(\tau)|^{-1}|R_{\pi,r}|^{-1}, \\ \widehat{S}^{G'}(f') =& \int_{\Phi_2(G', \zeta)} S^{G'}(\phi')\widetilde{f_1}'(\phi')\overline{f_{2}'(\phi')} d\phi', \\
S^{G'}(\phi^{\prime}) =& \frac{1}{|\mathcal{S}_{\phi^{\prime}}|}, \qquad \phi = \xi'\circ\phi'.
\end{align*}
\end{theorem}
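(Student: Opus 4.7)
The plan is to establish the two displayed equalities separately and then verify that $\widehat{S}^{G'}(f')$ is indeed stable on $G'$. First, I would start from the spectral expansion of the invariant local trace formula and extract $I_{\disc}(f)$. Arthur's expansion presents $I_{\disc}(f)$ as an integral against weighted characters $I_M(\tau,f)$ over the full tempered parameter space $T(G,\zeta)$. Since $f_1 \in C_{\cusp}(G(\R),\zeta)$, the weighted character $I_M(\tau,f_1)$ vanishes for every proper Levi $M$, collapsing the parameter space to its elliptic subset $T_{\elll}(G,\zeta)$. Bookkeeping the normalizations, in which a virtual representation $\tau = (M,\pi,r)$ is counted together with the action of the representation-theoretic $R$-group $R_{\pi,r}$, produces the density $i^{G}(\tau) = |d(\tau)|^{-1}|R_{\pi,r}|^{-1}$ and hence the first equality.

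Next, I would stabilize the resulting integral. The essential ingredients are Shelstad's classification of tempered $L$-packets and her spectral transfer factors $\Delta(\phi',\tau)$: each elliptic tempered virtual representation $\tau$ of $G(\R)$ is, up to the transfer factor, the image of a stable combination of characters coming from a tempered Langlands parameter $\phi'$ of an elliptic endoscopic group $G'$, via $\phi = \xi'\circ\phi'$. Substituting this into the first equality and exchanging the integral over $T_{\elll}(G,\zeta)$ with the sum over $G'\in\mathcal{E}_{\elll}(G)$, the factor $f_{1,G}(\tau)$ becomes the stable transfer $\widetilde{f_1}'(\phi')$ by the inverse adjoint relations, while the measure on $T_{\elll}(G,\zeta)$ pushes forward to a measure on $\Phi_2(G',\zeta)$. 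The combinatorial coefficients $i^{G}(\tau)$, once aggregated over $\tau$ in each $L$-packet, reassemble into $\iota(G,G')\cdot |\mathcal{S}_{\phi'}|^{-1}$, producing the second equality.

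The main obstacle is the compatibility between the representation-theoretic $R$-group $R_{\pi,r}$ appearing in $i^{G}(\tau)$ and the endoscopic $R$-group attached to $\mathcal{S}_{\phi'}$: without this matching, the resummation of the combinatorial factors across an $L$-packet breaks. To resolve it, I would construct transfer factors directly for the triples $(M,\pi,r)$ and verify that they satisfy both the adjoint and the inverse adjoint relations of Shelstad simultaneously; this is carried out in Section~6 and is the heart of the argument. Once this identification is in place, the stability of $\widehat{S}^{G'}(f')$ follows essentially from its construction: the integrand is the product of $S^{G'}(\phi') = 1/|\mathcal{S}_{\phi'}|$, the stable transfer $\widetilde{f_1}'(\phi')$, and $\overline{f_2'(\phi')}$, each of which depends only on the stable class of $\phi'$ on $G'$. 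Hence $\widehat{S}^{G'}$ descends to a stable distribution on $G'(\R)$, completing the proof.
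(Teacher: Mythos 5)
Your proposal is correct and follows essentially the same route as the paper: reduce $I_{\disc}(f)$ to an integral over $T_{\elll}(G,\zeta)$ using cuspidality of $f_1$, absorb the $|d(\tau)|^{-1}$ factor into a new cuspidal function $\widetilde{f_1}$ via the trace Paley--Wiener theorem, apply the adjoint and inverse adjoint relations for the transfer factors $\Delta(\tau,\phi')$, $\Delta(\phi',\tau)$ together with the identification $|R_{\pi,r}|=|R_\phi|$ and the coefficient relation $|\mathcal{S}_\phi|/|\mathcal{S}_{\phi'}|=|Z(\widehat{G'})^\Gamma/Z(\widehat{G})^\Gamma|$, and exchange the $\tau$-integral with the sum over $\Phi_{\elll}^{\mathcal{E}}(G,\zeta)$ by the Fubini-type lemma. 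The only place you elide detail is in saying ``$f_{1,G}(\tau)$ becomes $\widetilde{f_1}'(\phi')$ by the inverse adjoint relations'': one first replaces $d(\tau)f_{1,G}(\tau)$ by $\widetilde{f}_{1,G}(\tau)$ via Paley--Wiener and only then invokes the inversion formula, and the coefficient $\iota(G,G')$ emerges from the measure comparison on $\Phi_2(G',G,\zeta)$ versus $\Phi_2(G',\zeta)$ combined with the center-quotient identity, but these are precisely the paper's Lemmas and your outline captures the structure correctly.
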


\bigskip
 We shall obtain a stable local trace formula $S_{\geo}=S_{\spec}$ in Theorem \ref{theorem,ls} and an explicit formula for the stable distribution. We then have the following main theorem.

 \bigskip
 \begin{theorem}
 If $h\in \mathcal{H}(G(\A_{\fin}))$, and the highest weight of representation $\mu$ is regular, then we have

  \begin{equation*}
  \begin{split}
  &\tr(R_{\disc}(\pi_{\R},h))   \\
=&\sum_{G^{\prime}\in\mathcal{E}_{\elll}(G)}\iota(G,G^{\prime})\sum_{M^{\prime}\in\mathcal{L}^{G^{\prime}}}(-1)^{\dimm(A_{M^{\prime}}/A_{G^{\prime}})}|W^{M^{\prime}}_{0}||W^{G^{\prime}}_{0}|^{-1}\sum_{\delta\in\{M^{\prime}(\Q)\}}P_{\mu}(M^{\prime})S\Phi_{M^{\prime}}(\phi^{\prime}_{\mu},\delta)(h_{M})^{M^{\prime}}(\delta),
\end{split}
\end{equation*}
 and the multiplicity formula of the discrete series is

 \[  m_{\disc}(\pi_{\R},K_{0})=\tr(R_{\disc}(\pi_{\R},\mathit{1}_{K_{0}})).
   \]
\end{theorem}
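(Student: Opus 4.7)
The plan is to begin from the Proposition recorded earlier in the introduction, which rewrites
\[
\tr(R_{\disc}(\pi_{\R},h)) = \sum_{G'\in\mathcal{E}_{\elll}(G)}\iota(G,G')\sum_{M'\in\mathcal{L}^{G'}}|W^{M'}_{0}||W^{G'}_{0}|^{-1}\sum_{\delta\in\Delta(M',V,\zeta)}b^{M'}(\delta)\, S^{G'}_{M'}(\delta,f_{\pi_{\R}})\, (h_{M})^{M'}(\delta),
\]
and then to replace $b^{M'}(\delta)$ and $S^{G'}_{M'}(\delta,f_{\pi_{\R}})$ by explicit expressions. The first input is that the Shelstad transfer $f^{G'}_{\pi_{\R}}$ of the pseudo-coefficient $f_{\pi_{\R}}$ is a stable cuspidal function on the endoscopic group $G'$. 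The vanishing of $S^{G'}_{M'}(\delta,f_{\pi_{\R}})$ for non-semisimple $\delta$, already noted in the excerpt, then collapses the inner sum to semisimple classes $\delta\in\{M'(\Q)\}$, and the stable coefficient $b^{M'}(\delta)$ at such $\delta$ supplies the sign $(-1)^{\dimm(A_{M'}/A_{G'})}$ anticipated from an $L^{2}$-Lefschetz-type formula.

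The technical heart of the argument is the explicit identity
\[
S^{G'}_{M'}(\delta,f_{\pi_{\R}}) \;=\; P_{\mu}(M')\, S\Phi_{M'}(\phi'_{\mu},\delta)
\]
for semisimple $\delta$, with $\phi'_{\mu}$ the discrete series parameter attached to the $L$-packet $\Pi_{\disc}(\mu)$. To establish it I would carry out the program outlined in Section~6: construct transfer factors attached to virtual tempered triples $\tau = (M,\pi,r)$ and to tempered Langlands parameters $\phi'$; verify the compatibility of the representation-theoretic $R$-groups $R_{\pi}$ with Shelstad's endoscopic $R$-groups so that the endoscopic decomposition of $I_{\disc}(f)$ is truly governed by a stable distribution $\widehat{S}^{G'}(f')$ as in the preceding theorem; establish the general inverse adjoint relations in this setting; and thereby deduce the stable local trace formula $S_{\geo}=S_{\spec}$ (Theorem~\ref{theorem,ls}). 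Comparing the geometric side of that identity with the stable Weyl integral formula, evaluated at $\phi'_{\mu}$, extracts the claimed explicit expression for $S^{G'}_{M'}(\delta,f_{\pi_{\R}})$, with the Euler-type factor $P_{\mu}(M')$ coming from the restriction of the stable character of $\pi_{\R}$ to the Cartan subgroups of $M'$.

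Substituting these explicit values into the Proposition yields the first display of the theorem. The multiplicity statement $m_{\disc}(\pi_{\R},K_{0}) = \tr(R_{\disc}(\pi_{\R},\mathit{1}_{K_{0}}))$ is then essentially a matter of definition: the Hecke operator attached to $\mathit{1}_{K_{0}}$ acts as an orthogonal projection onto $K_{0}$-fixed vectors on each isotypical component, so its trace on $R_{\disc}(\pi_{\R}) = \pi_{\R}^{\oplus m_{\disc}(\pi_{\R})}$ recovers the multiplicity used in the introduction.

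The principal obstacle is the stabilization of the Archimedean local trace formula with a test function cuspidal at only one component. In Arthur's $p$-adic treatment, two cuspidal factors reduce the geometric side to an inner product; in the present asymmetric setting this shortcut is unavailable, and one must instead control the spectral $R$-groups, the spectral transfer factors, and the inverse adjoint relations for virtual tempered data directly. Securing the compatibility of the representation-theoretic $R$-groups with the endoscopic $R$-groups is the essential step that allows the spectral side of the local trace formula to be stabilized, and without it the explicit formula for $S^{G'}_{M'}(\delta,f_{\pi_{\R}})$ — and hence the multiplicity formula — cannot be extracted.
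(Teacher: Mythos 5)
Your overall route is the paper's: start from Proposition \ref{prop:cmf}, use that the Shelstad transfer of the pseudo-coefficient is stable cuspidal (hence $S^{G'}_{M'}(\delta,f_{\pi_\R})$ vanishes for non-semisimple $\delta$), invoke the explicit formula for $S^{G'}_{M'}(\delta,f)$ obtained by comparing the stable local trace formula with the stable Weyl integral formula, evaluate the stable coefficient $b^{M'}(\delta)$ on semisimple elliptic classes, and then observe that the $K_0$-multiplicity statement is immediate from $\tr\pi_{\fin}(\mathit{1}_{K_0}) = \dim V(\pi_{\fin}^{K_0})$. The programme you outline for Section 6 (spectral transfer factors on virtual triples, $R$-group compatibility, inverse adjoint relations, then $S_{\geo}=S_{\spec}$) is exactly what the paper carries out.

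However, your bookkeeping of the constants is off in a way that matters. You claim the sign $(-1)^{\dim(A_{M'}/A_{G'})}$ is supplied by the stable coefficient $b^{M'}(\delta)$, and that the key explicit identity is $S^{G'}_{M'}(\delta,f_{\pi_\R}) = P_\mu(M')\,S\Phi_{M'}(\phi'_\mu,\delta)$. Neither is correct. In the paper, the descent formula of \cite{A10} together with \cite[Theorem 8.3.1]{K2} gives
\[
b^{M'}(\delta) = b^{M'}_{\elll}(\delta) = \tau(M'),
\]
which is a Tamagawa number and carries no sign. The sign $(-1)^{\dim(A_{M'}/A_{G'})}$ comes instead from Theorem \ref{theorem:smt} (the comparison of the stable local trace formula with the stable Weyl integral formula), which yields
\[
S^{G'}_{M'}(\delta,f_{\pi_\R}) = (-1)^{\dim(A_{M'}/A_{G'})}\,S^{G'}(\phi'_\mu)\,S\Phi_{M'}(\phi'_\mu,\delta)\,\Delta(\phi'_\mu,\pi_\R),
\]
and only then is $P_\mu(M') := S^{G'}(\phi'_\mu)\,\Delta(\phi'_\mu,\pi_\R)\,\tau(M')$ defined to absorb both the transfer-factor contribution and the Tamagawa factor from $b^{M'}(\delta)$. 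As written, your ``technical heart'' identity is inconsistent with your own attribution of the sign (and, depending on how one reads it, either drops $\tau(M')$ or double-counts it). You should also record explicitly the computation $f'_{\phi'_\mu}(\phi') = \Delta(\phi'_\mu,\pi_\R)$ when $\phi'=\phi'_\mu$ and $0$ otherwise, and the simplification $|d(\tau)|=1$ and $R_\phi$ trivial under the regularity hypothesis on $\mu$, both of which are needed before Theorem \ref{theorem:smt} gives the displayed formula.
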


\bigskip
We will also give a stable $L^{2}$-Lefschetz formula, when the test function is stable cuspidal. This case is also studied in an unpublished work of Kottwitz \cite{K4}.

\begin{theorem}
  For any $h\in \mathcal{H}(G(\mathbb{A}_{\fin}))$, we have
  \[\mathcal{L}_{\mu}(h)=\sum_{G^{'}\in\mathcal{E}_{\elll}(G)}\iota(G,G^{'})\sum_{M^{'}\in\mathcal{L}^{G^{'}}}|W^{M^{'}}_{0}||W^{G^{'}}_{0}|^{-1}\sum_{\delta\in\{M^{\prime}(\mathbb{Q})\}}F_{\mu}(M^{\prime})S\Phi_{M^{\prime}}(\phi^{\prime},\delta)(h_{M})^{M^{\prime}}(\delta).\]
\end{theorem}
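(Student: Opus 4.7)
The plan is to mirror the strategy of the preceding multiplicity theorem, substituting the stable cuspidal Euler--Poincar\'e function $f_{EP,\mu}$ at the archimedean place for the pseudo-coefficient $f_{\pi_\R}$, and tracking the corresponding archimedean factor $F_\mu(M')$ in place of $P_\mu(M')$.

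First I would invoke Arthur's identification of the $L^{2}$-Lefschetz number with an invariant distribution,
\[\mathcal{L}_\mu(h) = I(f_{EP,\mu}\cdot h),\]
where $f_{EP,\mu}$ is a stable cuspidal Euler--Poincar\'e function at $\R$ attached to the representation of highest weight $\mu$. This is the exact analogue of the starting identity $\tr(R_{\disc}(\pi_\R,h))=I(f_{\pi_\R}h)$ used in the multiplicity theorem, and it is the single point at which $f_{\pi_\R}$ is replaced by $f_{EP,\mu}$.

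Next I would apply the (now unconditional) stabilization of the invariant trace formula,
\[I(f_{EP,\mu}\,h) = \sum_{G'\in\mathcal{E}_{\elll}(G)} \iota(G,G')\,\widehat{S}^{G'}((f_{EP,\mu}\,h)'),\]
and expand each $\widehat{S}^{G'}$ via its geometric expansion combined with the splitting formula separating the archimedean and finite components. This produces a triple sum over $M'\in\mathcal{L}^{G'}$ and stable rational classes $\delta\in\{M'(\Q)\}$ of the form $b^{M'}(\delta)\,S^{G'}_{M'}(\delta,f'_{EP,\mu})\,(h_M)^{M'}(\delta)$. Since $f_{EP,\mu}$ is stable cuspidal, its endoscopic transfer $f'_{EP,\mu}$ is stable cuspidal on each $G'(\R)$; by the vanishing established through the explicit spectral form of the stable local trace formula of Section 6, $S^{G'}_{M'}(\delta,f'_{EP,\mu})=0$ unless $\delta$ is semisimple.

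For semisimple $\delta$, I would evaluate $b^{M'}(\delta)\,S^{G'}_{M'}(\delta,f'_{EP,\mu})$ explicitly by comparing the stable local trace formula with the stable Weyl integration formula, yielding the product $F_\mu(M')\,S\Phi_{M'}(\phi',\delta)$. The main obstacle is this archimedean computation: one must decompose $f_{EP,\mu}$ as a signed combination of pseudo-coefficients $f_{\pi_\R}$ indexed by $\pi_\R\in\Pi_{\disc}(\mu)$, apply the explicit formula for $S^{G'}_{M'}(\delta,f'_{\pi_\R})$ established in the proof of the multiplicity theorem, and verify that the sign $(-1)^{\dimm(A_{M'}/A_{G'})}$ appearing there is absorbed into the Euler--Poincar\'e averaging that defines $F_\mu(M')$. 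Once this identification is made, reassembling the endoscopic sum yields the stated formula and simultaneously recovers the unpublished computation of Kottwitz.
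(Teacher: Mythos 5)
Your proposal is correct and follows essentially the same strategy as the paper: start from $\mathcal{L}_\mu(h)=I(f_\mu h)$ with $f_\mu$ the stable cuspidal Euler--Poincar\'e function, invoke Proposition~\ref{prop:cmf} (stabilization plus splitting formula), use stable cuspidality to kill non-semisimple $\delta$, and evaluate the archimedean factor explicitly to extract $F_\mu(M')S\Phi_{M'}(\phi',\delta)$. The one cosmetic difference is that you propose to compute $S^{G'}_{M'}(\delta,f'_\mu)$ by decomposing $f_\mu=(-1)^{q(G)}\sum_{\pi_\R\in\Pi_{\disc}(\mu)}f_{\pi_\R}$ and applying the pseudo-coefficient formula of Theorem~\ref{theorem: m} termwise, whereas the paper plugs $f_\mu$ directly into the stable-cuspidal formula of Theorem~\ref{theorem:smt} and evaluates $f^{G'}_\mu(\phi')=(-1)^{q(G)}\sum_{\pi\in\Pi_\phi}\Delta(\phi',\pi)$ in one step; these are the same calculation, since the decomposition into pseudo-coefficients is exactly what produces that value of $f^{G'}_\mu(\phi')$. (One small imprecision: the decomposition is not a ``signed'' combination in the sense of alternating signs --- every $f_{\pi_\R}$ enters with the same uniform coefficient $(-1)^{q(G)}$.)
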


\bigskip
 The content of the paper is as follows. In section \ref{sec:pd}, we will introduce $K$-groups, which are unions of connected reductive groups. If $F$ is a $p$-adic field, then a $K$-group over $F$ is still a connected reductive group. However, if $F$ is an Archimedean field, then a $K$-group over $F$ is not connected. Any connected reductive group $G_{1}$ is a component of a unique $K$-group $G$. The invariant and the stable distributions can be extended to $K$-group as in \cite{A8}.

\bigskip
In section \ref{sec:mi}, we will obtain the relation between multiplicity and invariant trace formula. However when the test function is a pseudo-coefficient, we cannot give an explicit invariant trace formula. This is because we cannot cancel the contributions coming from the unipotent elements. For test functions which are not stable, we need to stabilize the invariant trace formula to overcome this obstruction. In general, the invariant trace formula is the identity obtained from two different expansions of a certain linear form $I(f)$. One side is the geometric expansion

\[I(f)=\sum_{M\in\mathcal{L}}|W^{M}_{0}||W^{G}_{0}|^{-1}\sum_{\gamma\in\Gamma(M,S)}a^{M}(S,\gamma)I_{M}(\gamma,f), \]
which is a linear combination of distributions parameterized by conjugacy classes $\gamma$
in Levi subgroups $M(F_{S})$. The other side is the spectral expansion

\[I(f)=\sum_{M\in\mathcal{L}}|W^{M}_{0}||W^{G}_{0}|^{-1}\int_{\Pi(M,S)}a^{M}(S,\pi)I_{M}(\pi,f)\,d\pi,\]
which is a linear combination of distributions parameterized by representations $\pi$ of Levi subgroup $M(F_{S})$.
  Here $ f \in\mathcal{H}(G,S)$ in the Hecke algebra of $G(F_{S})$ (see \cite{A1}, \cite{A2}), $S$ is the finite set of place in $F$. Arthur \cite{A11} had stabilized the invariant trace formula. So we can express the multiplicity by using the geometric side of the stable global trace formula.

  \bigskip
  In section \ref{sec:gs}, We apply the splitting formula to reduce the local component of the global trace formula for the Archimedean case. Compared with the local trace formula, the local component of the global trace formula is more complicated, because it contains contributions from nontrivial unipotent elements. When the test function is stable cuspidal, those contributions vanish, and the local components of the local trace formula coincide with the local components of the global trace formula. Moreover, the pseudo-coefficient of a representation can be transferred to a stable cuspidal function under the Shelstad transfer mapping. So it is enough to study the stabilization of local trace formula. In general, the geometric side of the local trace formula concerns semisimple regular elements. The spectral side of the local trace formula concerns the tempered representations and is thus much simpler in comparison with the global trace formula.

\bigskip
 The spectral side of the invariant local trace formula contains a natural object, which is called the virtual character. A simple invariant local trace formula can be given in terms of the virtual representation as in \cite{A5}. In section \ref{sec:tc}, we introduce the virtual representation for the Archimedean case, and define the transfer factors $\Delta(\tau,\phi)$ together with the inverse factors $\Delta(\phi,\tau)$, building on Shelstad's work \cite{S3}. Then we stabilize the spectral side of the local trace formula. When one component of the test function is cuspidal, we just need to consider elliptic representations. In section \ref{sec:se}, we obtain a formula for the spectral side of the stable local trace formula.

\bigskip
In section \ref{sec:cs}, we study the main term $S^{G}_{M}(\delta,\phi)$. To do this, we need to stabilize the Weyl integral formula, which connects between the geometric and spectral sides of a local component of the local trace formula. We can then compare with the stable local trace formula, and the main term of the formula will appear.

\bigskip
In section \ref{sec:mf}, we establish the relation between $S^{G}_{M}(\delta,f)$ and the invariant main term $\Phi_{M}(\gamma,f)$. This allows us to overcome the key obstacle. We have $S^{G}_{M}(\delta,f_{\phi})=0$, if $\delta$ is not semisimple, where $f_{\phi}$ is a stable cuspidal function which arises by a transfer from a pseudo-coefficient of $\pi$.
We shall collect various terms, and they will be combined into our main formula in Theorem \ref{theorem: m}. In section \ref{sec:sl}, we shall give a stable formula for $L^{2}$-Lefschetz number, when the test function is stable cuspidal.

\smallskip
\noindent\textbf{Acknowledgements.} I would like to thank my advisor James Arthur's help and encouragement with this project. This paper is indebted to Chung Pang Mok for much useful conversation. I would like to thank Bin Xu for helpful conversation. We gratefully acknowledge generous support provided by the National Natural Science Foundation of P.R. China Grant No.11601503.

\bigskip
\section{Preliminaries and distributions of K-groups } \label{sec:pd}
\subsection{Notation}

 Let $G$ be a reductive group over $\mathbb{Q}$, $M$ be a Levi subgroup of $G$, $P$ be a parabolic subgroup, and $A_{G}$ be the $\mathbb{Q}$-split component of the center of $G$. If $\gamma$ is a semisimple element of $G$, we denote by $G({\mathbb{Q},\gamma})$ the centralizer of $\gamma$ in $G(\mathbb{Q})$, and write $G_{\gamma}$ for the identity component of $G(\mathbb{Q},\gamma)$. Write $G_{\der}$ for the derived group of $G$, $G_{\ssc}$ for the simply connected cover of $G_{\der}$. We say that $G$ is cuspidal, if $G(\mathbb{R})$ contains a maximal $\mathbb{R}$-torus $T$ such that $T/A_{G}$ is anisotropic over $\mathbb{R}$. In other words, the $\mathbb{Q}$-split component coincide with $\mathbb{R}$-split component $A_{G(\mathbb{R})}$ and the real group $G(\mathbb{R})$ contains an $\mathbb{R}$-elliptic maximal torus. A torus $T$ in $G$ is elliptic if $T/A_{G}$ is anisotropic. An element of $G(\mathbb{R})$ is elliptic if its centralizer in $G(\mathbb{R})$ is an elliptic torus of $G$. Let $X(G)_{\mathbb{R}}$ be the module of $\mathbb{R}$-rational character on $G$, and $\mathfrak{a}_{G}=\Hom(X(G)_{\mathbb{R}},\mathbb{R})$.

\bigskip
We denote by $\mathbb{A}$ the ring of ad\'eles of $\mathbb{Q}$, and denote by $\mathbb{A}_{\fin}$ the finite part of ad\'eles ring over $\mathbb{Q}$, so that $\mathbb{A}=\mathbb{R}\times \mathbb{A}_{\fin}$. We denote by $G^{\prime}$ an endoscopy group of $G$. The center of $G$ is denoted as $Z(G)$, and we will denote by $Z$ a central induced torus in $G$ over $\mathbb{Q}$. Throughout the paper, $F$ will be a field of characteristic 0.

\bigskip
\subsection{$K$-groups}

 $K$-groups are natural objects for studying the stabilization of general trace formula, which contain several connected components. To work with several groups simultaneously is suitable for studying the transfer properties of the various objects in the trace formula. The use of several inner forms is due originally to Vogan. When Kottwitz learned of Vogan's idea, he applied it to the Langlands-Shelstad transfer factors. We shall follow Arthur's discussion \cite{A8}, where he extended the geometric transfer factors to $K$-groups.

\bigskip
\begin{definition}
$G$ is called a $K$-group over a local field $F$, if
\begin{enumerate}
\item $G$ is an algebraic variety whose connected components are reductive algebraic groups over $F$, endowed with an equivalence class of objects $\{(\psi,u)\}$. Here $(\psi,u)=\{ (\psi_{\alpha\beta},u_{\alpha\beta}): \alpha, \beta\in \pi_{0}(G)\}$,
$\psi_{\alpha\beta}:G_{\beta}
 \to G_{\alpha}$  is an isomorphism over $\bar{\mathnormal{F}}$, and $u_{\alpha\beta}: \Gamma \to G_{\alpha,\ssc}$ is an 1-cocycle, where $\Gamma =\Gal(\bar{\mathnormal{F}}/\mathnormal{F})$. We require that $\{(\psi,u)\}$
satisfy the compatibility conditions,\label{compatibility}

(i) \quad $\psi_{\alpha\beta} \tau(\psi_{\alpha\beta})^{-1}=\Int(u_{\alpha\beta}(\tau));$

(ii) \quad $ \psi_{\alpha\gamma}=\psi_{\alpha\beta}\psi_{\beta\gamma};$

(iii)\quad $u_{\alpha\gamma}(\tau)=\psi_{\alpha\beta,\ssc}(u_{\beta\gamma}(\tau))u_{\alpha\beta}(\tau),$
\smallskip
for any $\alpha,\beta,\gamma \in \pi_{0}(G)$ and $\tau\in \Gamma$,
\item the corresponding sequence
$$\{1\} \longrightarrow  \{u_{\alpha\beta}:\beta\in \pi_{0}(G)\} \longrightarrow \mathnormal{H}^{1}(F,G_{\alpha}) \xrightarrow{K_{G_{\alpha}}}\pi_{0}(Z(\widehat{G})^{\Gamma})^{\ast}.$$
   of pointed sets is exact. Here $\alpha\in \pi_{0}(G)$, the map $K_{G_{\alpha}}$ is defined in \cite[\S 1]{K2}.
\end{enumerate}
 \end{definition}
\bigskip
The notation $\pi_{0}(G)$ is a set of indices for the components of $G$, and we write $\pi_{0}(Z(\widehat{G})^{\Gamma})$ for the set of connected components of $Z(\widehat{G})^{\Gamma}$ as usual.

 We say that two such families $(\psi,u)$ and $(\psi^{'},u^{'})$ are equivalent, if there are elements $g_{\alpha\beta}\in G_{\alpha,\ssc}$ such that $\psi^{'}_{\alpha\beta}=\Int(g_{\alpha\beta})\psi_{\alpha\beta}$ and $u^{'}_{\alpha\beta}(\tau)=g_{\alpha\beta}u_{\alpha\beta}(\tau)\tau(g_{\alpha\beta})^{-1}$, for any $\alpha,\beta\in\pi_{0}(G)$ and $\tau\in\Gamma$. We call a representative $( \psi, u)$ from the equivalence
 class as a frame for $G$. If $F$ is $p$-adic, $K_{G_{\alpha}}$ is a bijection \cite[Theorem 1.2]{K2}, so a $K$-group is just a connected reductive group. If $F$ is Archimedean, the kernel of $K_{G_{\alpha}}$ is the image of $\mathnormal{H}^{1}(F,G_{\alpha,\ssc})$ in $\mathnormal{H}^{1}(F,G_{\alpha})$ \cite[Theorem 1.2]{K2}, and the number of components of a $K$-group over $\mathbb{R}$ therefore is equal to the number of classes in this image.

\bigskip
Suppose that $G$ is a $K$-group, then we can write $G=\coprod_{\alpha\in\pi_{0}(G)}G_{\alpha}$. A homomorphism between $K$-groups $G$ and $\bar{G}$ over $F$ is a morphism

   \[  \theta= \coprod_{\alpha}(\theta_{\alpha}:G_{\alpha}\to \bar{G}_{\bar{\alpha}})\]

   from $G$ to $\bar{G}$ (as varieties over $F$) that preserves all the structures. In other words, it satisfies the following two properties:

   \begin{enumerate}
   \item  For any $\alpha\in\pi_{0}(G)$, and $\bar{\alpha}=\theta(\alpha)$ the image of $\alpha$ in $\pi_{0}(G)$, the restriction $\theta_{\alpha}:G_{\alpha}\rightarrow\bar{G}_{\bar{\alpha}}$ is a homomorphism of connected algebraic groups.
   \item  There are frames $(\psi,u)$ and $(\bar{\psi},\bar{u})$ for $G$ and $\bar{G}$, such that $\theta_{\alpha}\circ\psi_{\alpha\beta}=\bar{\psi}_{\bar{\alpha}\bar{\beta}}\circ\theta_{\beta}$, and $\bar{u}_{\bar{\alpha}\bar{\beta}}=\theta_{\alpha,\ssc}(u_{\alpha\beta})$, for each $\alpha,\beta\in \pi_{0}(G)$.
   \end{enumerate}

\bigskip
     An isomorphism of $K$-groups is an invertible homomorphism. Arthur introduces the notion of weak isomorphism in \cite[\S4]{A9}. It satisfies all the requirements of an isomorphism except for the condition relating $u_{\bar{\alpha}\bar{\beta}}$ with $u_{\alpha\beta}$, so that one can identify $K$-groups that differ only by the choice of functions $\{u_{\alpha\beta}\}$. If we are given a connected reductive group $G_{1}$ over $F$, we can find a $K$-group $G$ over $F$, such that $G_{\alpha_{1}}=G_{1}$ for some $\alpha_{1}\in \pi_{0}(G)$. There could be several such $G$, but the weak isomorphism class of $G$ is uniquely determined by $G_{1}$. In particular, any connected quasisplit group $G^{\ast}$ has a quasisplit inner $K$-form $G$, which is unique up to weak isomorphism. We say that $K$-group $G$ is quasisplit if it has a connected component that is quasisplit over $F$.

\bigskip
     The Levi subgroups $M$ of a $K$-group $G$ was defined \cite[\S 1]{A8}. For any such $M$, we construct the associated objects $W(M)$, $\mathcal{P}(M)$, $\mathcal{L}(M)$ and $\mathcal{F}(M)$ as in \cite[\S 1]{A8}, which represent the Weyl group, the set of the parabolic subgroups for which the component of Levi subgroup equals $M$, the set of Levi subgroups which contain the Levi subgroup $M$ and the set of parabolic subgroups for which the component of the Levi subgroups contain the Levi subgroup $M$ respectively. They play the same role as in the connected case. We can also form a dual group $\widehat{G}$ for $G$, and a dual Levi subgroup $\widehat{M}\subset\widehat{G}$ for $M$. For any such group $\widehat{M}$, we also have the analogue object $\mathcal{P}(\widehat{M})$, $\mathcal{L}(\widehat{M})$ and $\mathcal{F}(\widehat{M})$, with the understanding that the sets contain only $\Gamma$-stable elements. It comes with a bijection $L\to \widehat{L}$ from $\mathcal{L}(M)$ to $\mathcal{L}(\widehat{M})$, and a bijection $P\to \widehat{P}$ from $P(M)$ to $P(\widehat{M})$.

\bigskip
 Invariant harmonic analysis for connected real groups extends in a natural way to $K$-groups. For example, we have the Harish-Chandra's Schwartz space,

  \[ C(G)=\bigoplus_{\alpha\in\pi_{0}(G)} C(G_{\alpha})\]

on $G(\mathbb{R})$, and its invariant analogue

 \[ I(G)= \bigoplus_{\alpha\in \pi_{0}(G_{\alpha})}I(G_{\alpha}).\]

  Elements in $C(G)$ are the functions on $G(\mathbb{R})$, and elements in $I(G)$ can be regarded as the functions on the disjoint union

\[ \Pi_{\temp}(G)=\coprod_{\alpha\in\pi_{0}(G)}\Pi_{\temp}(G_{\alpha})\]
of sets of irreducible tempered representations on the groups $G_{\alpha}(\R)$, or as functions on the disjoint union

\[\Gamma _{\reg}(G)=\coprod_{\alpha\in\pi_{0}(G)}\Gamma _{\reg}(G_\alpha)\]
of the sets of strongly regular conjugacy classes in the groups $G_{\alpha}(\mathbb{R})$.

\bigskip
   For purpose of induction argument, it is convenient to fix a central character datum $(Z,\zeta)$ for $G$, where $Z$ is an induced torus over $\mathbb{R}$, with the central embedding $Z\to Z_{\alpha}\subset{G_{\alpha}}$ that is compatible with isomorphisms $\psi_{\alpha\beta}$. The second component $\zeta$ is a character on $Z(\mathbb{R})$, which corresponds to a character $\zeta_{\alpha}$ on $Z_{\alpha}(\mathbb{R})$ for each $\alpha$.

 We can then form the space
\[C(G,\zeta)= \bigoplus_{\alpha\in\pi_{0}(G)}C(G_{\alpha},\zeta_{\alpha})\]
of $\zeta^{-1}$ equivariant Schwartz functions on $G(\mathbb{R})$, and its invariant analogue
\[ I(G,\zeta)=\bigoplus_{\alpha\in\pi_{0}(G)}I(G_{\alpha},\zeta_{\alpha}).\]
 Elements in $I(G,\zeta)$ may be regarded either as $\zeta^{-1}$-equivariant functions on $\Pi_{\temp}(G,\zeta)$ or on $\Gamma_{\reg}(G/Z)$.

\bigskip
 If $\gamma$ lies in $\Gamma(G_{\alpha})$, we write $G_{\gamma}$ for the centralizer in $G_{\alpha}$ of (some representative of) $\gamma$. Two classes $\gamma_{1}$ and $\gamma_{2}$ in $\Gamma(G)$ with $\gamma_{i}\in\Gamma(G_{\alpha_{i}})$ for $ i=1,2$
 are stably conjugate, if $\psi_{\alpha_{1}\alpha_{2}}(\gamma_{2})$ is conjugate in $G_{\alpha_{1}}(\bar{F})$ to $\gamma_{1}$, for any frame $(\psi,u)$. We can then write $\Delta_{\reg}(G(F))$ for the set of strongly regular stable conjugacy classes in $G(F)$. There is a canonical injection $\delta\to \delta^{\ast}$ from $\Delta_{\reg}(G)$ to the set $\Delta_{\reg}(G^{\ast})=\Delta_{\reg}(G^{\ast}(F))$ of strongly regular stable conjugacy classes in the quasisplit inner twist $G^{\ast}(F)$.

\bigskip
An endoscopic datum for $G$ is defined entirely in terms of the dual group $\widehat{G}$, and is therefore no different from the connected case. $\mathcal{E}(G)$ will stand for the set of isomorphism classes of endoscopic data for $G$ that are relevant to $G$. An element in $\mathcal{E}(G)$ is therefore the image of some elliptic endoscopic datum $M^{'}=(M^{'},\mathcal{M}^{'},s^{'},\xi^{'})$ in $\mathcal{E}_{\elll}(M)$, for a Levi subgroup $M$ of $G$ and a dual Levi subgroup $\widehat{M}$ of $\widehat{G}$. Here the elliptic datum means that the image of $\mathcal{M}^{'}$ in ${}^{L}M$ is
contained in no proper parabolic subgroup ${}^{L}M$,
or equivalently that $(Z(\mathcal{M'})^{\Gamma})^{0}=(Z(\widehat{M})^{\Gamma})^{0}$. The set $\mathcal{\mathcal{E}}(G)$ embeds into the larger set $\mathcal{\mathcal{E}}(G^{\ast})$, which we identify as the collection of all isomorphism classes of endoscopic data for $G$. For each $G^{\prime}=(G^{\prime},\mathcal{G}^{\prime},s,\xi^{\prime})\in \mathcal{\mathcal{E}}(G^{\ast})$, we fix a central extension as in \cite[\S 2]{A7}.
$$1 \longrightarrow  \widetilde{Z}^{\prime} \longrightarrow \widetilde{G}^{\prime} \longrightarrow G^{\prime}\longrightarrow 1$$
of $G'$ by a central induced torus of $\widetilde {Z}^{\prime}$. Then there exists an $L$-morphism $\widetilde{\xi'}:\mathcal{G}^{\prime}\to {}^{L}\widetilde{G}^{\prime}$.

\bigskip
A $K$-group is a natural domain for the transfer factors of \cite{LS}. If $F$ is Archimedean, Arthur extends the transfer factors to the $K$-groups. It is known \cite{S4} that the set of conjugacy classes in the stable conjugacy classes of $K$-group can be parametrized by the set $\mathcal{E}(T)=\Img(\mathnormal{H}^{1}(\Gamma,T_{\ssc})\to\mathnormal{H}^{1}(\Gamma,T))$, where $T$ is the maximal torus of $G$. When $G$ is a connected group, then it's parametrized by a subset $\mathscr{D}(T)$ of $\mathcal{\mathcal{E}}(T)$. Here $\mathscr{D}(T)=\Ker(\mathnormal{H}^{1}(\Gamma,T)\to\mathnormal{H}^{1}(\Gamma,G))$, where $T$ is a compact maximal torus. Similarly, an $L$-packet of discrete series representations is parametrized by $\mathcal{E}(T)$.

\bigskip
  For example, If we consider the connected group $G^{'}=G^{'}_{\ssc}=SU(2,1)$ over $\mathbb{R}$, its $K$-group is $G=G^{'}\coprod G_{1}$, where $G_{1}=SU(3)$. Then a stable conjugacy class of regular elliptic elements of $G{'}$ consists of three conjugacy classes parametrized by three of the four elements of $\mathnormal{H}^{1}(\Gamma,T)$. Similarly, an L-packet of discrete series representations of $G^{\prime}$ is parametrized by three elements of the same group, and we can obtain the other conjugacy class and the other representation from $G_{1}$.

\bigskip
 We also extend Langlands-Shelstad transfer mapping to $K$-groups.
  \[ \varphi: \mathcal{H}(G,\zeta) \to SI(\widetilde{G}^{'},\widetilde{\zeta}^{'})\]
\[\varphi(f)=f^{'}(\delta^{'})=\sum_{\gamma\in\Gamma_{\reg}(G,\zeta)}\Delta(\delta^{'},\gamma)f_{G}(\gamma).\]
  Here $\mathcal{H}(G,\zeta)$ is the Hecke algebra. $SI(\widetilde{G}^{'},\widetilde{\zeta^{'}})$ is the space of stable orbital integrals of functions.
If $F$ is non-archimedean, this is the main result of Waldspurger \cite{W2} and Ngo \cite{N}. If $F$ is Archimedean, the result was proved in Shelstad's paper \cite{S1}.

\bigskip
If $F$ is a global field, then there is a notion of $K$-group over $F$. Such a $G$ satisfies the global analogue of the property as above, together with a local product structure. A local product structure on $G$ is a family of local $K$-groups $(G_{v},F_{v})$, indexed by the valuations of $F$, and a family of homomorphisms $G\to G_{v}$ over $F_{v}$ whose restricted direct product $G(\A)\to\prod_{v}G_{v}(F_{v})$ is an isomorphism over $\mathbb{A}$. Such a structure determines a surjective map
\[\alpha \mapsto \alpha_{V}=\prod_{v\in V}\alpha_{v}, \qquad   \alpha\in \pi_{0}(G),\alpha_{v}\in \pi_{0}(G_{v}),\]
of components. We also have a group theoretic injection of $G_{\alpha}(F)$ to $G_{\alpha_{v}}(F_{v})$ for each $\alpha\in \pi_{0}(G)$. We shall write \[G_{V}(F_{V})=
 \prod_{v\in V}G_{v}(F_{v})=\prod_{v\in V}\coprod_{\alpha_{v}}G_{v,\alpha_{v}}(F_{v})=\coprod_{\alpha_{V}}G_{V,\alpha_{V}}(F_{V}).\]

\bigskip
Suppose that $G$ is a $K$-group over $F$, and  $G^{\ast}$ is a quasisplit inner twist of $G$. Then
$G^{\ast}$ is a connected quasisplit group over $F$, together with a inner class of inner twists $\psi_{\alpha}:G_{\alpha}\to G^{*}$
and a corresponding family of functions $u_{\alpha}:\Gamma\to G^{\ast}_{\ssc}$, for $\alpha\in \pi_{0}(G)$.
Then $G^{\ast}$ determines a quasisplit inner twist $G^{\ast}_{v}$ of each local $K$-group $G_{v}$. We shall refer to $G$ as an inner $K$-form of $G^{\ast}$.

\bigskip
If $\gamma_{V}$ is an element in the set $\Gamma_{G_{V}}(M_{V})$, let $\alpha_{V}\in\pi_{0}(M_{V})$ be the index such that $\gamma_{V}$ belongs to $\Gamma_{G_{V}}(M_{\alpha_{V}})$. We define the weighted orbital integral of $f$ at $\gamma_{V}$ by
\[ J_{M_{V}}(\gamma_{V},f_{V})=J_{M_{V}}(\gamma_{V},f_{\alpha_{V}}) \qquad  f_{V}\in \mathcal{H}(G_{V},\zeta_{V}),\]
where $J_{M_{V}}(\gamma_{V},f_{\alpha_{V}})$ is the weighted orbital integral on $G_{\alpha_{V}}(F_{V})$. Similarly, we set \[I_{M_{V}}(\gamma_{V},f_{V})=I_{M_{V}}(\gamma_{V},f_{\alpha_{V}})   \qquad  f_{V}\in \mathcal{H}(G_{V},\zeta_{V}),\]
where $I_{M_{V}}(\gamma_{V},f_{\alpha_{V}})$ is the invariant distribution on $G_{\alpha_{V}}(F_{V})$.

\bigskip
If $f_{V}=\oplus_{\alpha_{V}}f_{\alpha_{V}}$ in $ \mathcal{H}(G_{V},\zeta_{V})$, $f_{G_{V}}(\gamma_{V})$ denotes the invariant orbital integral $I_{G_{V}}(\gamma_{V},f_{\alpha_{V}})$,
\[f^{'}_{V}(\delta^{'}_{V})=f^{\widetilde{G}^{'}_{V}}_{V}(\delta^{'}_{V})=\sum_{\gamma_{V}\in\Gamma(G_{V})}\Delta_{G_{V}}(\delta^{'}_{V},\gamma_{V})f_{G_{V}}(\gamma_{V}),\qquad \delta^{'}_{V}\in \bigtriangleup_{G_{V}}(\widetilde{G}^{'}_{V}),\]
then $f^{'}_{V}=\oplus_{\alpha_{V}}f^{'}_{\alpha_{V}}$. The Langlands-Shelstad transfer Theorem, applied to each of the groups $G_{\alpha_{V}}$, asserts that $f_{V}^{'}$ belongs to the space $SI\mathcal{H}(\widetilde{G}^{'}_{V},\widetilde{\zeta}^{'}_{V})$ of stable orbital integrals of functions in $\mathcal{H}(\widetilde{G}^{'}_{V},\widetilde{\zeta}^{'}_{V})$, where $\widetilde{G}^{'}_{V}$ comes with a central data $(\widetilde{Z}^{'}_{V},\widetilde{\zeta}_{V}^{'})$.

\bigskip
  Similarly, we can define the objects on a $K$-group on the spectral side. We will stabilize the spectral side of the local trace formula for the $K$-group over $\mathbb{R}$ in section \ref{sec:tc}, section \ref{sec:se}, and section \ref{sec:cs}.

\bigskip
\section{Multiplicity of discrete series and invariant trace formula}  \label{sec:mi}

   Suppose that $F$ is a  number field, and $G$ is a connected reductive $K$-group over $F$. We can form the ad\'elic ring $\A_{F} = \prod_{v}' F_{v}$, and the group of ad\'elic points of $G$ is $G(\A_{F}) = \prod_{v}'G(F_{v})$.

Automorphic representations of $G$ over $F$ are irreducible constituents of the right regular representation of $R$, defined by
   \[R(x)\varphi(y)=\varphi(yx), \qquad  \varphi\in L^{2}(G(F) \backslash G(\A_{F})).\]
The fundamental problem in the automorphic representation theory is to decompose $R$. It is well known that $R$ can be decomposed into a discrete spectrum and a continuous spectrum,
  \[ R=R_{\disc}\oplus R_{\cont}.\]
  Langlands \cite{L3} has studied the continuous spectrum by Eisenstein series. So it remains to study the discrete part
 \begin{equation}
  R_{\disc}=\oplus_{\pi\in\Pi_{\disc}(G(\A))}m_{\disc}(\pi)\pi
\end{equation}
where $\Pi_{\disc}(G(\A))$ stands for the set of equivalence classes of irreducible representations of $G(\A)$ on $L^{2}_{\disc}(G(F) \backslash G(\A_{F}))$.

\bigskip
 Suppose $\pi$ is an irreducible automorphic representation of $G(\A_{F})$. Then we have a decomposition
 \[ \pi = \otimes_{v}^{\prime} \pi_{v},\]
 such that
 \begin{enumerate}
 \item $\pi_{v}$ is an irreducible admissible representation of $G(F_{v})$;
 \item $\pi_{v}$ is unramified for almost all $v$.
 \end{enumerate}

  We write $\pi=\pi_{\R}\otimes \pi_{\fin}$, where $\pi_{\R}$ and $\pi_{\fin}$ are irreducible representations of $G(\R)$ and $G(\A_{\fin})$ respectively. In this paper we study the multiplicity formula of $m_{\disc}(\pi_{\R})$ for connected $K$-group
 over $\Q$. The original problem comes from \cite[p,284]{A3}, where Arthur applied the invariant trace formula to compute the $L^{2}$-Lefschetz numbers of Hecke operators. He obtained the sum of multiplicity formula for $\sum_{\pi\in\Pi_{\disc}(\mu)}m_{\disc}(\pi_{\R},K_{0})$, under a weak regularity assumption on the representations in $\Pi_{\disc}(\mu)$, where $G$ is a connected reductive group, and $K_{0}$ is an open compact subgroup of the finite ad\'elic group $G(\mathbb{A}_{\fin})$. The packet $\Pi_{\disc}(\mu)$ consists of the set of discrete series representations with the same infinitesimal character $\mu$. We will give a formula for single multiplicity $m_{\disc}(\pi_{\R},K_{0})$ again under regularity assumption on infinitesimal character on $\pi_{\mathbb{R}}$. Since the double coset space
   \[G(\Q)\backslash G(\A)/G(\R)K_{0}\]
   is finite, we denote by ${x_{1}=1,x_{2}\cdot\cdot\cdot, x_{n}}$ the set of representatives in $G(\A_{\fin})$. The groups \[\Gamma_{i}=(G(\Q)\cdot x_{i}K_{0}x_{i}^{-1})\cap G(\R), 1\leq i\leq n\] are arithmetic subgroups of $G(\R)$, and $G(\Q)\backslash G(\A)/K_{0}$ is the disjoint union of space $\Gamma_{i}\backslash G(\R)$.
 The question of the multiplicity is quite natural from the point of view of spectral theory. More generally, one can consider Hecke operators $h$ on $L^{2}(G(F)\backslash{G(\A)})$, where $h$ is a $K_{0}$-bi-invariant function in $\mathcal{H}(\A_{\fin})$. Any such operator commutes with the action of $G(\R)$. Its restriction to the subspace $R_{\disc}(\pi_{\R})$ is denoted as $R_{\disc}(\pi_{\R},h)$.

\bigskip
 If $G$ is a connected reductive group over $\Q$, the multiplicities of discrete series have a homological interpretation. The global multiplicity $m_{\disc}(\pi)$ occurs in the well known isomorphism
 \[\mathnormal{H}^{q}_{(2)}(h,\mathcal{F}_{\mu})\cong\bigoplus_{\pi\in\Pi(G(\A),\zeta)}(m_{\disc}(\pi)\dimm\mathnormal{H}^{q}(\mathfrak{g}(\R),K_{\R};\pi_{\R}\otimes \mu))\pi_{\fin}(h).\]
    (see \cite[\S2]{A3}, the coefficient $m_{\disc}(\pi)$ stands for the multiplicity of $\pi$ in $R_{\disc}$), where $\Pi(G(\A),\zeta)$ is the set of equivalence classes of irreducible representations of $G(\mathbb{A})$, whose the central character coincides with a given quasi-character $\zeta$ on $Z$. $\mathfrak{g}$ is the Lie algebra of $G$; Both of $\mathfrak{g}(\R)$ and $K_{\R}$ act on the space of $K_{\R}$-finite vectors of the representation $\pi_{\R}\otimes \mu$ of $G(\R)$. The relative Lie algebra cohomology groups $\mathnormal{H}^{q}(\mathfrak{g}(\R),K_{\R};\pi_{\R}\otimes \mu)$ give the contribution of $\pi_{\R}$ to the cohomology. $V(\pi^{K_{0}}_{\fin})$ denotes the subspace of vectors in the underlying space of $\pi_{\fin}$ which are fixed by $K_{0}$, this is a finite dimensional subspace, which gives the contribution of $\pi_{\fin}$ to the cohomology. If the multiplicity of $\pi_{\R}$ occurs discretely in the representation of $G(\R)$ on $L^{2}(G(\Q)\backslash G(\A)/K_{0},\zeta)$, then
    \begin{equation} \label{eq:mr}
    m_{\disc}(\pi_{\R})=\sum_{\substack{
                \pi=\pi_{\R}\otimes \pi_{\fin}\\
                \pi\in\Pi(G(\A),\zeta)}}m_{\disc}(\pi)\dimm(V(\pi^{K_{0}}_{\fin})).
  \end{equation}

  \bigskip
  A representation of $K$-group $G$ is determined by the representations of connected components of $G$. If $\pi_{\alpha}\in\Pi(G_{\alpha},\zeta), f_{\alpha}\in C(G_{\alpha},\zeta), f\in C(G,\zeta)$, then we define \[ f_{G}(\pi_{\alpha})=f_{G_{\alpha}}(\pi_{\alpha}),\quad f=\oplus_{\alpha} f_{\alpha},  \qquad \alpha\in\pi_{0}(G).\]
So we can naturally extend the homological interpretation for the representation on the connected components of $K$-group to the $K$-group. Then we can extend \eqref{eq:mr} to $K$-group.

\bigskip
We write $\Pi_{2}(G(\R),\zeta)$, $\Pi_{\temp}(G(\R),\zeta)$ and $\Pi(G(\R),\zeta)$ for the set of equivalence classes of discrete series representations, the set of equivalence classes of tempered representations and the set of equivalence classes of all irreducible admissible representations of $G(\R)$, whose central character coincides with a given character $\zeta$ on $Z$. $C(G(\R),\zeta)$ is the Schwartz space of $G(\mathbb{R})$, whose central character is given by a character $\zeta^{-1}$ on $Z$.

 If $f$ is any function in $C(G(\R),\zeta)$ and $\pi_{\R}$ belongs to  $\Pi(G(\R),\zeta)$, we can set
 \[\pi_{\R}(f)=\int_{G(\R)/Z}f(x)\pi_{\R}(x)\,dx \]

 \bigskip
\begin{lemma}  \label{pseudocoefficient}
Let $\pi_{\R}\in\Pi_{2}(G(\R),\zeta)$, there is a function
$f_{\pi_{\R}}\in C(G(\R),\zeta)$, such that for any $\sigma\in \Pi_{\temp}(G(\R),\zeta)$,
\[ \tr(\sigma(f_{\pi_{\R}}))=
  \begin{cases}
   1 & \text{if }  \sigma=\pi_{\R}, \\
   0       & \text{ otherwise} .
  \end{cases}
\]
\end{lemma}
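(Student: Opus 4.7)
The plan is to invoke the invariant Paley-Wiener theorem for the Harish-Chandra Schwartz space, due to Clozel and Delorme, which characterizes the image of the map $f \mapsto \widehat{f}$ from $C(G(\R),\zeta)$ to functions on $\Pi_{\temp}(G(\R),\zeta)$, where $\widehat{f}(\sigma) = \tr(\sigma(f))$. The image is a specific Paley-Wiener-Schwartz space: functions whose restrictions to each continuous family $\{\mathcal{I}_P^G(\rho\otimes\lambda) : \lambda\in i\mathfrak{a}_M^\ast\}$ (with $P=MA_MN$ a parabolic and $\rho$ a discrete series of $M(\R)$) are Schwartz in $\lambda$ and which satisfy the usual consistency relations under intertwining operators and Weyl group actions.

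First, I would recall the Langlands classification for tempered representations: every irreducible tempered $\sigma\in\Pi_{\temp}(G(\R),\zeta)$ arises as a subquotient of a unitarily induced representation $\mathcal{I}_P^G(\rho\otimes\lambda)$, with $\rho\in\Pi_2(M(\R),\zeta)$ and $\lambda\in i\mathfrak{a}_M^\ast$, and the square-integrable ones correspond precisely to the case $M=G$. In particular, the hypothesis $\pi_\R\in\Pi_2(G(\R),\zeta)$ implies that $\pi_\R$ is an isolated point of the tempered dual in the sense that it does not lie in any continuous family with positive-dimensional parameter.

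Next, I would define the candidate Paley-Wiener datum $\widehat{F}$ on $\Pi_{\temp}(G(\R),\zeta)$ by $\widehat{F}(\pi_\R)=1$ and $\widehat{F}(\sigma)=0$ for any other $\sigma$. Because $\pi_\R$ is discrete series, the function $\widehat{F}$ vanishes identically on every continuous family attached to a proper Levi $M\subsetneq G$, so the Schwartz decay condition holds trivially there, and the consistency relations under standard intertwining operators and the Weyl group $W(M)$ are automatic. On the discrete locus $M=G$, $\widehat{F}$ is the Kronecker delta at $\pi_\R$, and the relations reduce to the invariance of a singleton, which is immediate. Applying the Clozel-Delorme theorem then produces $f_{\pi_\R}\in C(G(\R),\zeta)$ with $\widehat{f}_{\pi_\R}=\widehat{F}$. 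For the $K$-group version, I use the decomposition $C(G,\zeta)=\bigoplus_{\alpha\in\pi_0(G)} C(G_\alpha,\zeta_\alpha)$: I build the pseudo-coefficient on the unique component $G_\alpha$ carrying $\pi_\R$ and extend by zero on the other components.

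The main obstacle is the verification that $\widehat{F}$ genuinely sits in the Paley-Wiener-Schwartz image. This rests on the isolation of the discrete series inside $\Pi_{\temp}(G(\R),\zeta)$, which follows from the irreducibility (generically) of tempered induction together with the disjointness of square-integrable $L$-packets from proper tempered induced series; this is where the Langlands classification and Harish-Chandra's results on tempered representations are essential. An alternative direct construction would take $f_{\pi_\R}$ to be a normalized complex conjugate matrix coefficient of $\pi_\R$ (which lies in $C(G(\R),\zeta)$ by Harish-Chandra's estimates for discrete series matrix coefficients) and verify the vanishing of $\tr(\sigma(f_{\pi_\R}))$ for $\sigma\not\cong\pi_\R$ via Schur orthogonality within the discrete spectrum and Harish-Chandra's Plancherel formula for the continuous tempered spectrum; but the Paley-Wiener route is cleaner and is the one adopted in the sequel.
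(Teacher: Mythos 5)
Your proposal is correct and takes essentially the same route as the paper: the paper dismisses the lemma in one line as ``an immediate consequence of the trace Paley--Wiener theorem of Arthur \cite{A6}'', and you are simply spelling out the argument that this citation compresses, including the key observation that the discrete series $\pi_{\R}$ is isolated in $\Pi_{\temp}(G(\R),\zeta)$ so that the Kronecker delta datum lies in the Paley--Wiener image. One small attribution correction: since the target is the Harish-Chandra Schwartz space $C(G(\R),\zeta)$ rather than $C_c^\infty$, the relevant theorem is Arthur's trace Paley--Wiener theorem for Schwartz functions \cite{A6}, not the Clozel--Delorme theorem (which is the $C_c^\infty$ version); the paper cites Arthur for exactly this reason.
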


The above lemma is an immediate consequence of the trace Paley-Wiener theorem of Arthur \cite{A6}. Such $f_{\pi_{\R}}$ is called for a pseudo-coefficient of $\pi_{\R}$. We say that a function $f\in C(G(\R),\zeta)$ is cuspidal, if $\tr\pi(f)$ is viewed as a function on $\Pi_{\temp}(G(\R),\zeta)$, is supported on $\Pi_{2}(G(\R),\zeta)$. So $f_{\pi_{\R}}$ is cuspidal.

\bigskip
 Now, we fix a function $h\in\mathcal{H}(G(\A_{\fin}))$, and set \[(f_{\pi_{\R}}h)(x)=f_{\pi_{\R}}(x_{\R})h(x_{\fin}),\qquad x=x_{\R}x_{\fin}\in G(\A), x_{\R}\in G(\R), x_{\fin}\in G(\A_{\fin}). \]
 If $\pi=\pi_{\R}\otimes\pi_{\fin}$ is any representation in $\Pi(G(\A),\zeta)$, we have
\begin{align*} \tr\pi(f_{\pi_{\R}}h)&=\tr(\int_{G(\A)/Z}(f_{\pi_{\R}}h)(x)\pi(x)\,dx)\\
&=\tr\pi_{\R}(f_{\pi_{\R}})\tr\pi_{\fin}(h).
\end{align*}

Since $f_{\pi_{\R}}$ is cuspidal, which will cancel the contributions from Levi subgroups, so the invariant trace formula simplifies. The spectral side of the invariant trace formula is
\begin{align*}
 I(f_{\pi_{\R}}h) &= \sum_{\pi\in \Pi(G(\A),\zeta)}m_{\disc}(\pi)\tr\pi(f_{\pi_{\R}}h) \\
 &= \sum_{\pi\in\Pi(G(\A),\zeta)}m_{\disc}(\pi)\tr\pi_{\R}(f_{\pi_{\R}})\tr\pi_{\fin}(h)\\
  &= \sum_{\pi=\pi_{\R}\otimes\pi_{\fin}}m_{\disc}(\pi)\tr\pi_{\fin}(h).
\end{align*}

If we take $h$ for the characteristic function $I_{K_{0}}$ in $\mathcal{H}(\A_{\fin})$, then $\tr\pi_{\fin}(h)=\dimm(V(\pi^{K_{0}}_{\fin}))$. So
\begin{equation} \label{eq:mii}
I(f_{\pi_{\R}}I_{k_{0}}) =m_{\disc}(\pi_{\R},K_{0}),
\end{equation}
and
\begin{equation}\label{eq:tif}
I(f_{\pi_{\R}}h)=\tr(R_{\disc}(\pi_{\R},h)).
\end{equation}
We can expand $m_{\disc}(\pi_{\R},K_{0})$ and $\tr(R_{\disc}(\pi_{\mathbb{R}},h))$ by using the geometric side of invariant trace formula, but we cannot obtain an explicit formula directly. This is because $f_{\pi_{\R}}$ is not a stable function, so we can not cancel the contribution of the non-trivial unipotent elements in the invariant trace formula.

\bigskip
 If the test function $f$ is stable cuspidal. We can obtain an explicit formula through the geometric side of the invariant trace formula \cite{A3}, and the results are easily extended to $K$-group. In the rest of this section, we review the setup in \cite{A3}. Firstly, if $G$ is a connected $K$-group over $\R$, $f\in C(G(\R),\zeta)$ is stable cuspidal and $\gamma\in M(\R)$, where $M(\R)$ is a Levi subgroup of $G(\R)$, then following the section $4$ and section $5$ of \cite{A3}, when $\gamma$ is $G$-regular, one defines
 \[\Phi_{M}(\gamma,f)=|D^{M}(\gamma)|^{-1/2}I_{M}(\gamma,f).\]

 Then one has the formula:
 \begin{equation} \label {eq:idf}
 \Phi_{M}(\gamma,f)=(-1)^{\dimm(A_{M}/A_{G})}\upsilon{(M_{\gamma})^{-1}}\sum_{\uptau\in \Pi(G^{\ast}(\R),\zeta)}\Phi_{M}(\gamma,\uptau)\tr\widetilde{\uptau}(f),
 \end{equation}
 and $\Phi_{M}(\gamma,f)$ is equal to zero if $\gamma$ is not semisimple (see \cite[Theorem 5.1]{A3}). Here we denote by \[D^{M}(\gamma)=\de((1-Ad(\sigma))_{\mathfrak{m}/\mathfrak{m}_{\sigma}}),\] $\gamma$ is an element in $M(\R)$ with the Jordan decomposition $\gamma=\sigma u$, $G^{\ast}$ is isomorphic to $G$ over $\mathbb{C}$, and $G^{\ast}/A_{G}(\R)$ is compact. $\widetilde{\uptau}$ is the contragredient of $\uptau$. The other terms are explained as follow:
\[ \upsilon(G)=(-1)^{q(G)}\vol(G^{\ast}/A_{G}(\R)^{\circ})|\mathscr{D}(G,B)|^{-1},\]
\[\Phi_{M}(\gamma,\uptau)=(-1)^{q(G)}|D^{G}_{M}(\gamma)|^{1/2}\sum_{\pi\in{\Pi}_{\disc}(\uptau)}\Theta_{\pi}(\gamma),\]
where $\Pi_{\disc}(\uptau)$ is an $L$-packet of discrete series of $G(\R)$ parameterized by $\uptau$, \[q(G)=\frac{1}{2}\dimm(G(\R)/K_{\R}A_{G}(\R)),\]
$B$ is a maximal torus of $G(\R)$ that is contained in $K_{\R}$, and $K_{\R}$ is a maximal compact subgroup of $G(\R)$. The invariant distribution $\Phi_{M}(\gamma,\uptau)$ is given in terms of Harish-Chandra's formula for stable character of discrete series. Let us review the explicit formula for the averaged discrete series characters. We can naturally extend it to the $K$-group $G(\mathbb{R})$. Let $Z(B)$ be the centralizer of the connected component $G(\mathbb{R})^{\circ}$ in $K_{\mathbb{R}}.$ $B(\mathbb{R})$ (refer to \cite[Lemma 3.4]{Harish-Chandra1}) equals the product of its connected component $B(\mathbb{R})^{\circ}$ with $Z(B)$. We set $\rho_{B}$ as usual to be half of the sum of positive roots of $(G,B)$. Let $\Lambda(\zeta)$ denote the set of pairs
     \[(\zeta,\lambda),   \quad \zeta\in Z(B)^{\ast}, \lambda\in \mathfrak{b}(\mathbb{C})^{\ast},\]
such that $z\eexp H \to \zeta(z)e^{(\lambda-\rho_{B})(H)}, z\in Z(B), H\in \mathfrak{b}(\mathbb{R})$ is a well defined quasi-character on $B(\mathbb{R})$ whose restriction to $A_{G}(\mathbb{R})^{\circ}$ equals $\zeta$, and $\lambda$ is regular. $\Lambda(\zeta)$ equipped with an action of Weyl group $W(G,B)$. The discrete series are parameterized by the $W(G(\mathbb{R}),B(\mathbb{R}))$-orbits in $\Lambda(\zeta)$. We denote by $\phi$ a discrete Langlands parameter, and then we can find that an $L$-packet $\Pi_{\phi}$ corresponds to the partition of a given $W(G,B)$-orbit into $W(G(\mathbb{R}),B(\mathbb{R}))$-orbits \cite{S1}. We set $\mathscr{D}(G,B)=W(G,B)/W(G(\mathbb{R}),B(\mathbb{R}))$, then
        \[ |\mathscr{D}(T)|=|\mathscr{D}(G,B)|,\]
where $T$ is a maximal torus which is $\mathbb{R}$-anisotropic modulo $A(\mathbb{R})^{\circ}$, and $T$ is conjugate with $B$.

\bigskip
  Let $\eta: G^{\ast} \rightarrow G$ be an isomorphism over $\C$ such that the automorphism $\eta^{\sigma}\eta^{-1}$ is inner for $\sigma\in \Gal(\C/R)$. We use $\eta$ to identity $A_{G}$ with the $\R$-split component of the center of $G^{\ast}$, and we assume that $G^{\ast}(\R)/A_{G}(\R)$ is compact. Then the representations in $\Pi(G^{\ast},\zeta)$ are all finite dimensional, According to Langlands classification \cite{L2}, the set $\Pi_{\disc}(G(\R),\zeta)$ is a disjoint union of finite subsets $\Pi_{\disc}(\uptau)$, which are parametrized by the irreducible representation $\uptau$ in $\Pi(G^{\ast}(\R),\zeta)$. If $\uptau$ and $\phi$ are parametrized by the same infinitesimal character, then the $L$-packet $\Pi_{\phi}$ equals the finite set $\Pi_{\disc}(\uptau)$. We can set $\Phi_{M}(\gamma,\phi)=\Phi_{M}(\gamma,\uptau)$.

\bigskip
 For given $\uptau\in \Pi(G^{\ast}(\R),\zeta)$, let $(\zeta,\lambda)\in \Lambda(\zeta)$ be the point in the corresponding orbit, such that $\lambda$ is positive on all the positive co-roots of $(G,B)$. Then if
 \[     \gamma=z\eexp H , z\in Z(B), H\in \mathfrak{b}(\mathbb{R})\]
 is a regular point in $B(\mathbb{R})$, we have
 \[\Phi_{G}(\gamma,\uptau)=\tr\uptau(\gamma)=\Delta^{G}_{B}(H)^{-1}\zeta(z)\sum_{s\in W(G,B)}\varepsilon(s)e^{(s\lambda)(H)}.\]
Here $\Delta^{G}_{B}(H)=\Pi_{\alpha>0}(e^{\frac{1}{2}\alpha(H)}-e^{-\frac{1}{2}\alpha(H)}).$

\bigskip
For the general averaged discrete series character $\Phi_{M}(\gamma,\uptau)$, let $T$ be a maximal torus in $M$, which is $\mathbb{R}$-anisotropic modulo $A_{M}(\mathbb{R})^{\circ}$. We take $R$ to be the set of real roots of $(G,T)$. The existence of torus $B$ means that $W(R)$ contains an element that acts as $-1$. We can take $T$ from its $M(\mathbb{R})$-conjugates so that $T=(T\cap B)A_{M}$. Then there is an element $y\in G(\mathbb{C})$, such that $Ad(y)(\mathfrak{b}(\mathbb{C}))=\mathfrak{t}(\mathbb{C})$, where $\mathfrak{t}$ is the Lie algebra of $T$.

Suppose that $\uptau\in \Pi(G(\mathbb{R}),\xi)$. $(\xi,\lambda)\in \Lambda(\xi)$ is a point in the corresponding $W(G,B)$-orbit such that $y\lambda$ is positive on all positive co-roots of $(G,T)$. Then $\Phi_{M}(\gamma,\uptau)$ vanishes for any regular point $\gamma\in T(\mathbb{R})$ unless $\gamma$ is of the form
\[\gamma=z\eexp(H), \quad z\in Z(B), H\in \mathfrak{t}(\mathbb{R}),\]
in which case
\begin{equation}
\Phi_{M}(\gamma,\uptau)=\Delta^{M}_{T}(H)^{-1}\varepsilon_{R}(H)\xi(z)\sum_{s\in W(G,B)}\varepsilon(s)\bar{C}(Q^{+}_{ys\lambda},R^{+}_{H})e^{(ys\lambda)(H)}.
\end{equation}
Here $\varepsilon_{R}(H)=(-1)^{|R^{+}_{H}\cap (-R^{+})|}$. $H$ is a regular point in $\mathfrak{t}(\mathbb{R})$, which is the Lie algebra of $T(\mathbb{R})$, and $R^{+}_{H}$ for the set of roots which are positive on $H$. $\varepsilon(s)$ is a sign function on $W(G,B)$, and $\bar{C}(Q^{+},R^{+})$ is an integer valued function, which is defined for root systems $R$ whose Weyl group $W(R)$ contains $-1$. The function $\bar{C}(Q^{+},R^{+})$ is uniquely determined by the following four properties.
\begin{enumerate}
\item $\bar{C}(sQ^{+},sR^{+})=\bar{C}(Q^{+},R^{+}), s\in W(R)$.
\item The number $\bar{C}(Q^{+},R^{+})$ vanishes unless $\nu(X)$ negative for every $X\in \mathfrak{a}_{R^{+}}$, and $\nu \in \mathfrak{a}_{Q^{+}}$.
\item  $\bar{C}(Q^{+},R^{+})+\bar{C}(s_{\alpha}Q^{+},R^{+})=2\bar{C}(Q^{+}\cap Q_{\alpha},R^{+}\cap R_{\alpha})$, for any reflection $s_{\alpha}\in W(R)$ corresponding to a root $\alpha\in R$.
\item If $R$ is the empty root system, then $\bar{C}(Q^{+},R^{+})=1$.
\end{enumerate}
 Here $R^{+}$ is a system of positive roots for $R$, and $Q^{+}$ is a positive system for the set $Q=R^{\vee}$ of co-roots, $Q^{+}_{\lambda}$ is the set of co-roots $\alpha^{\vee}$ of $Q=R^{\vee}$ for which $\lambda(\alpha^{\vee})$ are all positive.

\bigskip
 Arthur \cite{A3} extended $\Phi(\gamma,\uptau)$ to a continuous, $W(M,T)$-invariant function on $T(\R)$, and then extended to a function on $M(\R)$, which is a connected reductive group, and $\Phi(.,\uptau)$ is a $M(\mathbb{R})$-invariant function on $M(\mathbb{R})$ which is supported on the $M(\R)$-elliptic conjugacy classes. If $M\in\mathcal{L}$ is not cuspidal, then we set $\Phi_{M}(\gamma,\uptau)$ to be identically zero.

  Finally we come to the geometric side of the invariant trace formula. This will be reviewed in more details in section $4$ below. If the representative of the conjugacy class $\gamma\in \Gamma(M,S)$ is semisimple, it is independent of $S$. Moreover, for any semisimple element $\gamma\in M(\Q)$,  \[\mathnormal{a}^{M}(S,\gamma)=|\iota^{M}(\gamma)|^{-1}\vol(M_{\gamma}(\Q)\backslash{M_{\gamma}(\A)^{1}}) . \]

\bigskip
If $\gamma$ is not $\Q$-elliptic in $M$, then $\mathnormal{a}^{M}(S,\gamma)$ vanishes. Here
$|\iota^{M}(\gamma)|=|M_{\gamma}(\Q)\backslash{M(\Q,\gamma)}|$, $M_{\gamma}(\A)^{1}=A_{M}(\R)^{0}\backslash{M_{\gamma}(\A)}$.
 So if $f$ is stable cuspidal, we have an explicit geometric expansion of general global trace formula,
\begin{equation} \label{eq:inf}
 I(fh)=\sum_{M\in\mathcal{L}}|W^{M}_{0}||W^{G}_{0}|^{-1}\sum_{\gamma\in \Gamma(M,S)}a^{M}(S,\gamma)I_{M}(\gamma,fh),
\end{equation}
and $I_{M}(\gamma,fh)=I^{G}_{M}(\gamma,f)I^{M}_{M}(\gamma,h)=|D^{M}(\gamma)|\Phi_{M}(\gamma,f)h_{M}(\gamma)$.

\bigskip
\section{Global stable trace formula }  \label{sec:gs}

 Suppose now $G$ is a $K$-group over a number field $F$. The invariant trace formula can be extended to $K$-group, which is stabilized by Arthur in \cite{A9}, \cite{A10}, \cite{A11}. We need to recall the basic information about the stable trace formula before applying it to obtain the multiplicity formula. Let $S$ be a finite set of valuations of $F$ that contains the set of archimedean places and the set of places at which $G$ ramifies. The general invariant trace formula is the identity obtained from two different expansions of a certain linear form $I(f)$ in \cite{A2} for $f\in\mathcal{H}(G,S)$, the Hecke algebra of $G(F_{S})$. The geometric expansion
\begin{equation}
I(f)=\sum_{M\in\mathcal{L}}|W^{M}_{0}||W^{G}_{0}|^{-1}\sum_{\gamma\in\Gamma(M,S)}a^{M}(S,\gamma)I_{M}(\gamma,f)
 \end{equation}
is a linear combination of distributions parametrized by conjugacy classes $\gamma$
in Levi subgroups $M(F_{S})$. The spectral expansion
\begin{equation}
I(f)=\sum_{M\in\mathcal{L}}|W^{M}_{0}||W^{G}_{0}|^{-1}\sum_{t\geq 0}\int_{\Pi_{t}(M,S)}a^{M}(S,\pi)I_{M}(\pi,f)\,d\pi
\end{equation}
is a linear combination of distributions parameterized by representations $\pi$ of Levi subgroup $M(F_{S})$.

\bigskip
Arthur studied the more general coefficients $a^{M}(\gamma)$ and $a^{M}(\pi)$, which are the global objects. The terms $I_{M}(\gamma,f)$ and $I_{M}(\pi,f)$ are the local objects. We can apply splitting formula and descent formula to reduce the distribution to Levi subgroup of $G$. The multiplicity formula results are described by the geometric side of the global trace formula.

\bigskip
We need to stabilize the coefficient $a^{M}(\gamma)$ and the invariant distribution $I_{M}(\gamma,f)$. Those are the corresponding global theorem and local theorem \cite{A11}. Firstly, we recall the relation between the coefficient $a^{M}(\gamma)$ and $a^{M}(S,\gamma)$. We will freely use the notation in \cite{A9}. It suffices to consider $M=G$. The coefficient $a^{G}(S,\gamma)$ is defined on $\Gamma(G,S)$, where
$\Gamma(G,S)$ is the set of the $(G,S)$-equivalence classes in $G(F)$. The two elements $\gamma$ and $\gamma_{1}$ in $G(F)$, with standard Jordan decompositions $\gamma=c\alpha$ and $\gamma_{1}=c_{1}\alpha_{1}$, are defined to be $(G,S)$-equivalent if there is an element $\delta\in G(F)$ such that $\delta^{-1}c_{1}\delta=c$, and such that $\delta^{-1}\alpha_{1}\delta$ is conjugate to $\alpha$ in $G_{c}(F_{S})$. It is the usual conjugacy class if $\gamma$ is semisimple.

\bigskip
For a general element $\gamma=c\alpha$, $c$ being the semisimple part, $\alpha$ being the unipotent part, the coefficient is defined by a descent formula, \[a^{G}(S,\gamma)=i^{G}(S,c)|\stab(c,\alpha)|^{-1}a^{G_{c}}(S,\alpha),\]
where $\stab(c,\alpha)$ stands for the stabilizer of $\alpha$ in the finite group $(G_{c,+}(F)/G_{c}(F))$, which acts on the set of unipotent conjugacy classes in $G_{c}(F_{S})$. $i^{G}(S,c)$ is equal to 1, if $c$ is $F$-elliptic in $G$, and the $G(\A^{S})$ conjugacy class of $c$ meets $K^{S}$; otherwise equal to 0. The descent formula reduces the study of $a^{G}(S,\gamma)$ to the case of unipotent elements.

\bigskip
 We set
 \[ a^{G}_{\elll}(\gamma)=\sum_{\{\gamma\}}|Z(F,\gamma)|^{-1}a^{G}(S,\gamma)(\gamma_{S}/\gamma)^{-1},\]
where $\{\gamma\}$ is summed over those $Z_{S,O}=Z(F)\cap Z_{S}Z(O)^{S}$ orbit in $(G(F))_{G,S}$ that map to $\gamma_{S}$, and such that the $G(\A^{S})$ conjugacy class of $\gamma$ in $G(\A^{S})$ meets $K^{S}$, $Z(F,\gamma)=\{z\in Z(F):z\gamma=\gamma\}=\{z\in
Z_{S,O}:z\gamma=\gamma\}$, and $\gamma_{S}/\gamma$ is the ratio of the invariant measure on $\gamma_{S}$ and the signed measure on $\gamma_{S}$ that comes with $\gamma$. The coefficient $a^{G}_{\elll}(\gamma)$ is supported on the set of admissible elements in the discrete subset $\Gamma_{\elll}(G,S,\zeta)$ (see \cite[(2.6)]{A9}) of $\Gamma(G^{Z}_{S},\zeta_{V})$, where admissible elements is defined in \cite[\S 1]{A9}. We denote by
 \[G^{Z}=\{x\in G: H_{G}(x)\in \image(a_{Z}\mapsto a_{G})\}.\]

If $M$ is a Levi subgroup of $G$, and $\mu$ belongs to $\Gamma(M^{Z}_{S},\zeta_{S})$, the induced distribution $\mu^{G}$ is a finite linear combination of elements in $\Gamma(G^{Z}_{S},\zeta_{S})$. We write $\Gamma(G,S,\zeta)$ for the set of elements so obtained, as $M$ ranges over $\mathcal{L}$ and $\mu$ runs over the element $\Gamma_{\elll}(M,S,\zeta)$, these objects are compatible with the spectral side.

\bigskip
If $\gamma$ belongs to $\Gamma(G^{Z}_{V},\zeta_{V})$, $V_{\ram}\subset V\subset S$, we denote \cite[(2.8)]{A9}
 \[ a^{G}(\gamma)=\sum_{M\in\mathcal{L}}|W^{M}_{0}||W^{G}_{0}|^{-1}\sum_{k\in\mathcal{K}^{V}_{\elll}(M/Z,S)}a^{M}_{\elll}(\gamma_{M}\times k)r^{G}_{M}(k),\]
where $r^{G}_{M}(k)=\mathcal{J}_{M}(r^{V}_{S}(k),u^{V}_{S}), k\in\mathcal{K}((M/Z)^{V}_{S})$ is the unramified weighted orbital integrals, $\mathcal{K}^{V}_{\elll}(G/Z,S)$ for the set of $k$ in $\mathcal{K}((G/Z)^{V}_{S})$ such that $\gamma\times k$ belongs to $\Gamma_{\elll}(G,S,\zeta)$ for some $\gamma$.

If $f\in\mathcal{H}(G,V,\zeta)$, then the linear form $I(f)$ has a geometric expansion \cite[Proposition 2.2]{A9}
\begin{equation}I(f)=\sum_{M\in\mathcal{L}}|W^{M}_{0}||W^{G}_{0}|^{-1}\sum_{\gamma\in\Gamma(M,V,\zeta)}a^{M}(\gamma)I_{M}(\gamma,f).
  \end{equation}
The general trace formula was stabilized by Arthur, who inductively defined a general stable distribution on the quasisplit group $G^{\prime}$ which is independent of the $G$, and then built up the endoscopy trace formula.

\bigskip
If $G$ is quasisplit, Arthur \cite{A11} proved that
 \[S^{G}(f)=I(f)-\sum_{G^{'}\in\mathcal{\mathcal{E}}^{0}_{\elll}(G)}\iota(G,G^{'})\widehat{S}^{\widetilde{G}^{'}}(f^{'})\]
is stable.
For $G$ general, he \cite{A11} proved
 \[ I(f)=I^{\mathcal{E}}(f)=\sum_{G^{'}\in\mathcal{\mathcal{E}}_{\elll}(G)}\iota(G,G^{'})\widehat{S}^{G^{'}}(f^{'}).\]
The endoscopic trace formula $I^{\mathcal{E}}(f)$ and the stable trace formula $S^{G}(f)$ both have a geometric expansion
\begin{equation} \label{eq:edte}
  I^{\mathcal{E}}(f)=\sum_{M\in\mathcal{L}}|W^{M}_{0}||W^{G}_{0}|^{-1}\sum_{\gamma\in\Gamma^{\mathcal{E}}(M,V,\zeta)}a^{M,\mathcal{E}}(\gamma)I^{\mathcal{E}}_{M}(\gamma,f),
\end{equation}
and
\begin{equation} \label{eq:ste}
  S^{G}(f)=\sum_{M\in\mathcal{L}}|W^{M}_{0}||W^{G}_{0}|^{-1}\sum_{\delta\in\Delta(M,V,\zeta)}b^{M}(\delta)S^{G}_{M}(\delta,f).
  \end{equation}
Here the relation between coefficients is given by  \[a^{G}(\gamma)=a^{G,\mathcal{E}}(\gamma)=\sum_{G^{'}}\sum_{\delta^{'}}\iota(G,G^{'})b^{\widetilde{G^{'}}}(\delta^{'})\Delta_{G}(\delta^{'},\gamma)+\varepsilon(G)\sum_{\delta}b^{G}(\delta)\Delta_{G}(\delta,\gamma),
\]
with $\gamma\in\Gamma(G^{Z}_{V},\zeta_{V})$, $G^{'}, \delta^{'}$ and $\delta$ summed over $\mathcal{E}^{0}_{\elll}(G)=\mathcal{E}_{\elll}\backslash \{G^{\ast}\}$, $\Delta((\widetilde{G^{'}}_{V})^{Z^{'}},\widetilde{\zeta}^{'}_{V})$ and $\Delta^{\mathcal{E}}(G^{Z}_{V},\zeta_{V})$ respectively, where $G^{\ast}$ is the quasisplit inner form of $G$. The stable coefficient $b^{G}(\delta)$ is stable, meaning that $b^{G}(\delta)$ is supported on $\Delta(G,V,\zeta)$, and the coefficients $a^{G}(\gamma)$ and $b^{G}(\delta)$ are independent of $S$.

\bigskip
To stabilize the general trace formula, it essentially amounts to comparing the two expansions \eqref{eq:edte} and \eqref{eq:ste}. Assuming the Fundamental Lemma and the weighted Fundamental Lemma, Arthur \cite{A11} has proved this, and in 2008 Ngo \cite{N} proved the Fundamental Lemma. The weighted version was proved by Chaudouard-Laumon. So now the stable trace formula is available.

\bigskip
  We have the formula for the stable coefficient
   \[ b^{G}(\delta)= b^{G}_{\elll}(\delta,S)+\sum_{M\in \mathcal{L}^{0}}|W^{M}_{0}||W^{G}_{0}|^{-1}\sum_{k\in\mathcal{K}^{V}_{\elll}(M/Z,S)}b^{M}_{\elll}(\delta_{M}\times k)S^{G}_{M}(k),\]
where the stable term $S^{G}_{M}(k)$ comes from the unramified weighted orbital integrals $r^{G}_{M}(k)$ (the weighted Fundamental Lemma was applied to this part).
  Here $b^{G}_{\elll}(\delta)$ also satisfies a descent formula \cite{A10}, if $\delta_{S}$ is an admissible element in $\Delta_{\elll}(G,S,\zeta)$ with Jordan decomposition $\delta_{S}=d_{S}\beta_{S}$, then \[b^{G}_{\elll}(\delta_{S})=\sum_{d}\sum_{\beta}j^{G^{\ast}}(S,d)b^{G^{\ast}_{d}}_{\elll}(\beta),\]
where $d$ is summed over the set of elements in $\Delta_{ss}(G^{\ast})$ whose image in $\Delta_{\sss}(G^{\ast}_{S})$ equals $d_{S}$, where $G^{\ast}$ is quasisplit inner form of $G$, and $\beta$ is summed over the orbit of $(\overline{G^{\ast}}/\overline{G^{\ast}}_{d,S})(F)$ in $\Delta_{\unip}(G^{\ast}_{d,S},\zeta)$. Moreover, $b^{G}_{\elll}(\delta)$ is equal to zero if $\delta$ in the complement of $\Delta_{\elll}(G,S,\zeta)$ in the set of admissible elements in $\Delta^{\mathcal{E}}_{\elll}(G,S,\zeta)$ (see \cite[Theorem 1.1]{A10}), and $j^{G^{\prime}}(S,d^{\prime})=i^{G^{\prime}}(S,d^{\prime})\tau(G^{\prime})\tau(G^{\prime}_{d^{\prime}})^{-1}$ (see \cite[(1.7)]{A10}).

If $\delta^{\prime}$ is semisimple, elliptic in $\Delta_{\elll}(G^{\prime},V,\zeta)$, then by \cite[Theorem 8.3.1]{K2}, or \cite[page 105]{A11},
\begin{align*}
b^{G^{\prime}}(\delta^{\prime})&=b^{G^{\prime}}_{\elll}(\delta^{\prime})=j^{G^{\prime}}(S,\delta^{\prime})b^{G^{\prime}_{\delta^{\prime}}}_{\elll}(1) \\
 &= \tau(G^{\prime})\tau(T)^{-1}\tau(T)=\tau(G^{\prime}),
\end{align*}
where $T=G^{\prime}_{\delta^{\prime}}$ and $\tau(G^{\prime})=|\pi_{0}(Z(\widehat{G'})^{\Gamma})||\Ker^{1}(F,Z(\widehat{G'}))|^{-1}$ is the Tamagawa number of $G^{\prime}$ \cite[(5.1.1)]{K1}, or \cite{K3}.

\bigskip
We obtain a stable form of the invariant trace formula
\[I(f)=\sum_{G^{\prime}\in\mathcal{E}_{\elll}(G)}\iota(G,G^{\prime})\sum_{M^{\prime}\in\mathcal{L}^{G^{\prime}}}|W^{M^{\prime}}_{0}||W^{G{\prime}}_{0}|^{-1}\sum_{\delta\in \Delta(M^{\prime},V,\zeta)}b^{M^{\prime}}(\delta)S^{G^{\prime}}_{M^{\prime}}(\delta,f).\]
 Where $\Delta(M',V,\zeta)$ is the basis of $S\mathcal{D}(M^{\prime Z}_{V},\zeta)$, which is the subspace of stable distributions in $\mathcal{D}(M^{\prime Z}_{V},\zeta)$.  Here $\mathcal{D}(M^{\prime Z}_{V},\zeta)$ is the vector space of distributions $D$ on $M^{\prime Z}_{V}$ that satisfy the following three conditions:
 \begin{enumerate}
   \item $D$ is invariant under the conjugation by $M^{\prime Z}_{V}$,
   \item $D$ is $\zeta$-equivariant under translation by $Z_{V}$,
   \item $D$ is supported on the preimage in $M^{\prime Z}_{V}$ of a finite union of conjugacy classes in $\overline{M^{\prime Z}_{V}}=M^{\prime Z}_{V}/Z_{V}$.
 \end{enumerate}
   The Langlands's global coefficients
 \[\iota(G,G^{\prime})=\iota(G_{\alpha},G^{\prime})=\tau(G)\tau(G^{\prime})^{-1}|\Out_{G}(G^{\prime})|^{-1},\]
 where $\alpha\in \pi_{0}(G), G^{\prime}\in\mathcal{E}_{\elll}(G), \Out_{G}(G^{\prime})=\Aut_{G}(G^{\prime})/\widehat{G^{\prime}}$.

\bigskip
If $f=f_{\infty}h$, $f_{\infty}$ is cuspidal in $C(G(\R),\zeta)$ and $h\in\mathcal{H}(G(\A_{\fin})),$ then the term
\[S^{G}_{M}(\delta,f)= S^{G}_{M}(\delta,f_{\infty})(f^{\infty}_{V})^{M}(\delta),   \qquad \delta\in\Delta(M,V,\zeta).\]

This is a consequence of the stable splitting formula \cite[Theorem 6.1]{A8}. Indeed we set $V=V_{1}\coprod V_{2}$, where $V_{1}$ is the set of archimedean places, $f=f_{V_{1}}f_{V_{2}}$, then \[ S^{G}_{M}(\delta,f)=\sum_{L_{1},L_{2}\in\mathcal{L}(M)}e^{G}_{M}(L_{1},L_{2})\widehat{S}^{L_{1}}_{M}(\delta,f_{V_{1},L_{1}})
\widehat{S}^{L_{2}}_{M}(\delta,f_{V_{2},L_{2}}),\]
where $e^{G}_{M}(L_{1},L_{2})$ was defined \cite[Theorem 6.1]{A8}. Since $f_{V_{1}}$ is cuspidal, thus $(f_{V_{1}})_{L_{1}}=0$  for $L_{1}\neq G$. On the other hand, $e^{G}_{M}(G,L_{2})\neq 0$ only when $L_{2}=M$ in which case it is equal to 1. Then
\[S^{G}_{M}(\delta,f)=\widehat{S}^{G}_{M}(\delta,f_{V_{1},G})\widehat{S}^{M}_{M}(\delta,f_{V_{2},M}).\]
However $V$ is finite, so we continue this process to obtain \[S^{G}_{M}(\delta,f_{\infty}h)=S^{G}_{M}(\delta,f_{\infty})(h^{\infty}_{V})^{M}(\delta).
\]

\bigskip
We now get the following proposition by combining \eqref{eq:tif}.
\begin{proposition}\label{prop:cmf}
For any $h\in\mathcal{H}(G(\A_{\fin}))$, and $f_{\pi_{\R}}\in C_{\cusp}(G(\R),\zeta)$ is pseudo-coefficient of $\pi_{\R}\in \Pi_{2}(G(\R),\zeta)$. We have
\begin{equation}
\begin{split}
\tr(R_{\disc}(\pi_{\R},h))&=I(f_{\pi_{\R}}h)\\
&=\sum_{G^{'}\in\mathcal{E}_{\elll}(G)}\iota(G,G^{'})\sum_{M^{'}\in\mathcal{L}^{G^{'}}}|W^{M^{'}}_{0}||W^{G^{'}}_{0}|^{-1}\sum_{\delta\in\Delta(M^{'},V,\zeta)}b^{M^{'}}(\delta)S^{G^{'}}_{M^{'}}(\delta,f_{\pi_{\R}})(h_{M})^{M^{'}}(\delta).
\end{split}
\end{equation}
\end{proposition}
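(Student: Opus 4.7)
The first equality $\tr(R_{\disc}(\pi_{\R},h))=I(f_{\pi_{\R}}h)$ is already recorded as equation \eqref{eq:tif}, so the task reduces to expanding $I(f_{\pi_{\R}}h)$ in terms of stable distributions on the elliptic endoscopic groups of $G$. The plan is to combine three ingredients that have been set up in the preceding sections: (i) the endoscopic identity $I(f)=\sum_{G'\in\mathcal{E}_{\elll}(G)}\iota(G,G')\widehat{S}^{G'}(f')$ proved by Arthur in \cite{A11}, which is now unconditional thanks to the Fundamental Lemma of Ngo and the weighted Fundamental Lemma of Chaudouard--Laumon; (ii) the geometric expansion \eqref{eq:ste} of the stable trace formula on each $G'$; and (iii) the stable splitting formula of \cite[Theorem 6.1]{A8} applied to the decomposition $V=V_{1}\amalg V_{2}$ with $V_{1}$ the archimedean places.

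First I would apply the endoscopic identity to $f=f_{\pi_{\R}}h$ and then insert the geometric expansion \eqref{eq:ste} for each $\widehat{S}^{G'}(f')$, which immediately produces a sum of the shape on the right-hand side, except that the local term is $S^{G'}_{M'}(\delta,(f_{\pi_{\R}}h)')$ rather than a product. Next I would invoke the stable splitting formula repeatedly to write
\[
S^{G'}_{M'}(\delta,(f_{\pi_{\R}}h)')=\sum_{L_{1},L_{2}\in\mathcal{L}(M')}e^{G'}_{M'}(L_{1},L_{2})\,\widehat{S}^{L_{1}}_{M'}(\delta,f_{\pi_{\R},L_{1}}')\,\widehat{S}^{L_{2}}_{M'}(\delta,h_{L_{2}}').
\]
Because $f_{\pi_{\R}}$ is cuspidal by Lemma \ref{pseudocoefficient}, its transfer $f'_{\pi_{\R}}$ is a stable cuspidal function on the quasisplit endoscopic group, so $(f_{\pi_{\R}})_{L_{1}}=0$ unless $L_{1}=G'$. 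The coefficient $e^{G'}_{M'}(G',L_{2})$ is nonzero only when $L_{2}=M'$, in which case it equals $1$. Iterating the splitting over the remaining places in $V_{2}$ collapses the double sum to the single product
\[
S^{G'}_{M'}(\delta,(f_{\pi_{\R}}h)')=S^{G'}_{M'}(\delta,f_{\pi_{\R}})\,(h_{M})^{M'}(\delta),
\]
which is exactly the local factor appearing in the claimed formula.

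The main obstacle is conceptually contained in step (iii): one must be sure that the splitting formula continues to apply after the transfer to the endoscopic group $G'$, and that the cuspidality of $f_{\pi_{\R}}$ at the archimedean place is preserved under the Langlands--Shelstad transfer mapping $\varphi$ extended to the $K$-group setting. The preservation of cuspidality follows from Shelstad's matching of pseudo-coefficients with stable cuspidal functions, which we have already recalled and which is discussed further in the introduction. Once this is in hand, assembling the factors $\iota(G,G')$, $|W^{M'}_{0}||W^{G'}_{0}|^{-1}$, the stable coefficients $b^{M'}(\delta)$ over $\delta\in\Delta(M',V,\zeta)$, and the finite-part orbital integral $(h_{M})^{M'}(\delta)$ yields the stated identity. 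The explicit evaluation of the archimedean factor $S^{G'}_{M'}(\delta,f_{\pi_{\R}})$ is deferred to the stabilization of the local trace formula carried out in the later sections.
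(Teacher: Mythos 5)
Your argument is correct and follows the same route as the paper: invoke equation \eqref{eq:tif} for the first equality, expand $I(f_{\pi_{\R}}h)$ by Arthur's endoscopic/stable trace formula together with the geometric expansion \eqref{eq:ste}, and then apply the stable splitting formula of \cite[Theorem~6.1]{A8} iteratively, using the cuspidality of $f_{\pi_{\R}}$ (and hence of its Shelstad transfer) to kill all terms with $L_{1}\neq G'$ and force $L_{2}=M'$. The paper's proof is exactly this combination, presented slightly more tersely, so the two proofs coincide in substance.
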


\bigskip
We now have a stable trace formula. In order to make this more explicit, we need to examine the individual terms more closely. The stable orbital integral $(h_{M})^{M^{'}}(\delta)$ is treated by the transfer theorem. The term $S^{G}_{M}(\delta,f_{\pi_{\R}})$ is a stable distribution attached to a invariant distribution, which is more complicated. We need to give an explicit formula for $S^{G}_{M}(\delta,f_{\pi_{\R}})$. We know that the local component of stable global trace formula is more complicated than the local component of stable local trace formula. But if the test function is stable cuspidal, they are the same, because the component of invariant local trace formula and the local component of invariant trace formula are the same, when $G$ is a connected reductive group \cite{A3}. Moreover we have the following proposition.

\bigskip
\begin{proposition} \label{prop:compa}
If $G(\mathbb{R})$ is a $K$-group, $G^{\prime}$ is the endoscopy group of $G$, $f$ is a cuspidal function in $\mathcal{H}_{\ac}(G(\mathbb{R}),\zeta)$, $\delta\in M^{\prime}(\mathbb{R})$ is any element with Jordan decomposition $\delta=cu$, and $u$ is a non-trivial unipotent element, then $S^{G^{\prime}}_{M^{\prime}}(\delta,f)=0$.
\begin{proof}
   $G$ is a $K$-group over $\R$, we can write $G=\amalg_{\alpha\in\pi_{0}(G)}G_{\alpha}$, where $G_{\alpha}$ is a connected reductive group.
   If $\delta\in M^{\prime}_{\alpha}$ and  $\delta=cu$, $u$ is not trivial, then we have
   \[S^{G^{\prime}}_{M^{\prime}}(\delta,f)=S^{G^{\prime}_{\alpha}}_{M^{\prime}_{\alpha}}(\delta,f_{\alpha})=\widehat{S}^{G^{\prime}_{\alpha}}_{M^{\prime}_{\alpha}}(\delta,(f)^{G^{\prime}_{\alpha}})=0.\]
   The third equality is from Arthur's result \cite[Theorem 5.1]{A3}. Indeed we know that when $G$ is a connected reductive group, and when the test function is stable cuspidal, then the corresponding invariant distribution vanishes on the non-semisimple element. However $f^{G^{\prime}_{\alpha}}(\delta)$ is stable cuspidal function of $G^{\prime}_{\alpha}$ by Shelstad's transfer theorem, so we obtain $S^{G^{\prime}}_{M^{\prime}}(\delta,f)=0$.
\end{proof}
\end{proposition}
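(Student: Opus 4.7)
The plan is to reduce the statement on the $K$-group $G$ to a statement on each connected component, and then appeal to a known vanishing result for connected reductive groups, bridged by Shelstad's transfer.

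First I would unpack the $K$-group structure. Since $G$ is a $K$-group over $\R$, we can write $G = \coprod_{\alpha\in\pi_{0}(G)} G_{\alpha}$ with each $G_{\alpha}$ a connected reductive group, and an admissible function $f\in\mathcal{H}_{\ac}(G(\R),\zeta)$ decomposes as $f = \oplus_{\alpha} f_{\alpha}$ with $f_{\alpha}\in\mathcal{H}_{\ac}(G_{\alpha}(\R),\zeta_{\alpha})$. The cuspidality of $f$ means that each component $f_{\alpha}$ is cuspidal in $\mathcal{H}_{\ac}(G_{\alpha}(\R),\zeta_{\alpha})$, because cuspidality is tested on tempered characters and the tempered dual of $G$ is the disjoint union of the tempered duals of the $G_{\alpha}$. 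The distribution $S^{G^{\prime}}_{M^{\prime}}(\delta,\cdot)$ inherits this block structure: if $\delta\in M^{\prime}_{\alpha}(\R)$ for a unique $\alpha$, then $S^{G^{\prime}}_{M^{\prime}}(\delta,f) = S^{G^{\prime}_{\alpha}}_{M^{\prime}_{\alpha}}(\delta,f_{\alpha})$.

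Next I would invoke Shelstad's transfer theorem. Since $f_{\alpha}$ is cuspidal and the Langlands--Shelstad transfer map sends cuspidal functions to stable cuspidal functions on the endoscopic group, the transfer $(f_{\alpha})^{G^{\prime}_{\alpha}}$ is a stable cuspidal function on the quasisplit group $G^{\prime}_{\alpha}(\R)$. By the defining relation between $S^{G^{\prime}_{\alpha}}_{M^{\prime}_{\alpha}}$ and its stable counterpart applied to the transfer, we obtain
\[
S^{G^{\prime}_{\alpha}}_{M^{\prime}_{\alpha}}(\delta,f_{\alpha}) = \widehat{S}^{G^{\prime}_{\alpha}}_{M^{\prime}_{\alpha}}\!\bigl(\delta,(f_{\alpha})^{G^{\prime}_{\alpha}}\bigr).
\]

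Finally I would apply Arthur's vanishing theorem for connected groups, namely \cite[Theorem 5.1]{A3}, which asserts that for a connected reductive group the invariant (hence stable) distribution $\Phi_{M}(\delta,\cdot)$ attached to a stable cuspidal function vanishes whenever $\delta$ is not semisimple. Since the Jordan decomposition $\delta = cu$ has non-trivial unipotent part $u$, $\delta$ is not semisimple, so the right-hand side above is zero. Combining the three steps gives $S^{G^{\prime}}_{M^{\prime}}(\delta,f)=0$. The main subtlety to check carefully is the compatibility of cuspidality with the Shelstad transfer, i.e.\ that stable cuspidal functions transfer to stable cuspidal functions on endoscopic groups; this is standard but must be justified to avoid a circularity with the definition of the stable distribution on the $K$-group.
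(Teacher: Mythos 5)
Your proposal follows essentially the same route as the paper: decompose the $K$-group into connected components, reduce $S^{G'}_{M'}(\delta,f)$ to $\widehat{S}^{G'_{\alpha}}_{M'_{\alpha}}(\delta,(f_{\alpha})^{G'_{\alpha}})$ via the Langlands--Shelstad transfer (which sends cuspidal functions to stable cuspidal ones), and then apply Arthur's vanishing theorem \cite[Theorem 5.1]{A3} for stable cuspidal functions on connected reductive groups. The only difference is that you spell out more explicitly why cuspidality is inherited component-by-component and flag the transfer-of-cuspidality step as the point to justify carefully, but the key lemmas and the structure of the argument coincide with the paper's.
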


\bigskip
From the above proposition, we see that the component of stable local trace formula and the component of stable global formula are compatible. So we have reduced the study of the stable trace formula to the stable local trace formula in the archimedean case. There are two ways to study the stable distribution $S^{G}_{M}(\delta,f_{\infty})$. One way is to explicitly stabilize the invariant trace formula in this special case. The other is to stabilize the local trace formula, and then we compare the stable Weyl integral formula with the stable local trace formula. In this paper we follow the second way.

\bigskip
\section{ The transfer factors of spectral side and characters  } \label{sec:tc}

  Suppose that $G$ is a reductive $K$-group over $\R$, $Z$ stands for a central induced torus in $G$ over $\R$, $\zeta$ is a character on $Z(\R)$. Before we discuss the stabilization, we need to recall the invariant local trace formula\cite{A4} and virtual characters in the Archimedean case \cite{A5}. We set $V=\{\infty_{1},\infty_{2}\}$ for two Archimedean places. Then $G_{V}=G(\R)\times G(\R)$ and $\zeta_{V}=\zeta\times\zeta^{-1}$, while $f=f_{1}\times \bar{f}_{2}$, where $f_{1}, f_{2}\in C(G(\R),\zeta)$. Then
  $f$ is a function in the Schwartz space $C(G_{V},\zeta_{V})$.

   The geometric side of the local trace formula is the linear form
   \begin{equation}
   I(f)=\sum_{M\in \mathcal{L}}|W^{M}_{0}||W^{G}_{0}|^{-1}(-1)^{\dimm(A_{M}/A_{G})}\int_{\Gamma _{\elll}(M,V,\zeta)} I_{M}(\gamma,f)\,{d}\gamma,
   \end{equation}
   where $\Gamma_{\elll}(M,V,\zeta)=\{(\gamma,\gamma): \gamma\in \Gamma_{\elll}(M,\zeta)\}$. (The set $\Gamma_{\elll}(M,V,\zeta)$ is in bijection with the family  $\Gamma _{\elll}(\bar{M})$ of elliptic conjugacy classes in $\bar{M}(\R)= M(\R)/Z(\R).)$

\bigskip
The spectral side is the linear form
   \begin{equation} \label{eq:lsit}
   I(f)=\sum_{M\in \mathcal{L}}|W^{M}_{0}||W^{G}_{0}|^{-1}(-1)^{\dimm(A_{M}/A_
  {G})}\int_{T_{\disc}(M,V,\zeta)} i^{M}(\tau)I_{M}(\tau,f_{1}\times \bar{f_{2}})\,{d}\tau,
   \end{equation}
where $I_{M}(\tau,f_{1}\times \bar{f_{2}})=r_{M}(\tau,P)\theta(\tau,f_{1,P})\overline{\theta(\tau,f_{2,P})}$ as in \cite{A5}, $T_{\disc}(M,V,\zeta)$ stands for the diagonal image $\{(\tau,\tau^{\vee}): \tau \in T_{\disc}(M,\zeta)\}$ in $T_{\temp}(M_{V},\zeta_{V})$, defined as in \cite[\S 3]{A5}.

\bigskip
   We denote the leading term ( i.e. $M=G$ in \eqref{eq:lsit}) by $I_{\disc}(f)$. Then
   \begin{equation}\label{eq:infm}
   I_{\disc}(f)=\int_{T_{\disc}(G,V,\zeta)} i^{G}(\tau)f_{G}(\tau)\,{d}\tau ,
   \end{equation}
where $f_{G}(\tau)=(f_{1})_{G}(\tau)(\bar {f}_{2})_{G}(\tau^{\vee})= f_{1,G}(\tau)\overline {f_{2,G}(\tau)}$, and \[i^{G}(\tau)=|W^{0}_{\pi}|^{-1}|R_{\pi,r}|^{-1}\sum_{w\in W_{\pi}(r)_{\reg}}\varepsilon_{\pi}(w)|\de(1-w)_{\mathfrak{a}^{G}_{M}}|^{-1},\]
where $\tau=(M,\pi,r).$ \[W_{\pi}=\{w\in W(\mathfrak{a}_{M}):w\pi\cong \pi\}.\] $W^{0}_{\pi}$ is the subgroup of elements $w\in W_{\pi}$ such that the operator $R(w,\pi)$(see \cite[\S 2]{A5})
 is a scalar. $R_{\pi}=W_{\pi}/W^{0}_{\pi}$, $R_{\pi,r}$ is the centralizer of $r$ in the group $R_{\pi}$. $W_{\pi}(r)_{\reg}$ is the intersection of the $W^{0}_{\pi}$-coset $W_{\pi}(r)=W^{0}_{\pi}r$ in $W_{\pi}$ with the set \[W_{\pi,\reg}=\{w\in W_{\pi}: \mathfrak{a}^{w}_{M}=\mathfrak{a}_{G}\}\] of regular elements. $\varepsilon_{\pi}(w)$ stands for the sign of projection of $w$ onto the Weyl group $W^{0}_{\pi},$ taken relative to the decomposition $W_{\pi}=W^{0}_{\pi}\rtimes R_{\pi}$.

\bigskip
If $f=f_{1}\times \bar{f}_{2}$ and $f_{1}\in C_{\cusp}(G(\R),\zeta),f_{2}\in C(G(\R),\zeta)$, then \[I(f)=I_{\disc}(f).\]
Here \[I(f)=\sum_{M\in \mathcal{L}}|W^{M}_{0}||W^{G}_{0}|^{-1}(-1)^{\dimm(A_{M}/A_{G})}\int_{\Gamma _{G-\reg,\elll}(M,V,\zeta)} I_{M}(\gamma,f)\,{d}\gamma,\]
where $\Gamma_{G-\reg,\elll}(M,V,\zeta)$ is the subset of strongly $G$ regular, elliptic elements in the basis $\Gamma(M,\zeta),$
and  $i^{G}(\tau)=|d(\tau)|^{-1}|R_{\pi,r}|^{-1},$        $d(\tau)=d(r)=\de(1-r)_{\mathfrak{a}_{M}/\mathfrak{a}_{G}}$.

\bigskip
We write $\widehat{G}$ for the complex dual of $G$, and ${}^{L}G$ for the $L$-group $\widehat{G}\rtimes W_{\R}$, which acts through $W_{\R}\to\Gamma$, Galois group $\Gamma=\{1,\sigma \}$.

An endoscopic data for $G$ is a tuple $(G^{\prime},\mathcal{G^{\prime}},s^{\prime},\xi)$, where
\begin{enumerate}
\item  $G^{\prime}$ is a $K$-group and quasi-split over $\R$, and so has dual Galois automorphism $\sigma_{\widehat{G'}}$.
\item $\mathcal{G}^{\prime}$ is a split extension of $W_{\R}$ by $\widehat{G^{\prime}}$, where $W_{\R}$ acts through $W_{\R}\to\Gamma$, and $\sigma$ act as $\sigma_{\widehat{G^{\prime}}}$ up to an inner automorphism of $\widehat{G^{\prime}}$.
\item $s^{\prime}$ is a semisimple element of $\widehat{G}$,
\item $\xi^{\prime}:\mathcal{G}^{\prime}\to {}^{L}G$ is an embedding of extensions under which the image of $\widehat{G^{\prime}}$ is the identity component of $\Cent(s^{\prime},\widehat{G})$, and the full image lies in $\Cent(s^{''},{}^{L}G)$, for some $s^{''}$ congruent to $s^{\prime}$ modulo the center $Z(\widehat{G})$ of $\widehat{G}$.
\end{enumerate}

We denote $\mathcal{E}(G)$ for the set of equivalence of endoscopic data for $G$. Then $\mathcal{E}(G)=\coprod_{\{M\}}(\mathcal{E}_{\elll}(M)/W(M))$ or $\mathcal{E}(G)=(\coprod_{G^{\prime}\in\mathcal{E}_{\elll}}\mathcal{L}^{G^{\prime}})/\sim$,
 where the equivalence relation is defined by $\widehat{G}$ conjugacy.

\bigskip
We first recall the geometric side of stable local trace formula, which is given by Arthur \cite[\S 6]{A11}. We shall identify $G^{\prime}$ with the diagonal endoscopic datum $G^{\prime}_{V}=G^{\prime}\times\bar{G^{\prime}}$ for $G_{V}=G\times G$, where $G^{\prime}$ represents the datum $(G^{\prime},\mathcal{G}^{\prime},s^{\prime},\xi^{\prime})$, and $\bar{G}^{\prime}$ represents the adjoint datum $(G^{\prime},\mathcal{G}^{\prime},(s^{\prime})^{-1},\xi^{\prime})$. The Langlands-Shelstad transfer factors attached to $(G,G^{\prime})$ depend on an auxiliary data $\widetilde{G^{\prime}}$ for $G^{\prime}$, such that
$\widetilde{\xi^{\prime}}:\mathcal{G}^{\prime}\to {}^{L}\widetilde{G^{\prime}}$ for $L$-morphism. We can choose a compatible auxiliary data for $\bar{G^{\prime}}$, so that the relative transfer factor for $(G,\bar{G^{\prime}})$ is the inverse of the relative transfer factor for $(G,G^{\prime})$, then we have transfer property, $\bar{f_{2}}^{\bar{G^{\prime}}}=\overline{f_{2}^{G^{\prime}}}$. The transfer mappings \cite{A8} were used to construct supplementary linear forms $I^{\mathcal{E}}(f)$ and $S^{G}(f)$  from $I(f)$, where
\[ I^{\mathcal{E}}(f)=\sum_{G^{\prime}\in\mathcal{E}^{0}_{\elll}(G)}\iota(G,G^{\prime})\widehat{S^{\prime}}
(f^{\prime})+\varepsilon(G)S^{G}(f),\]
 the linear forms $\widehat{S^{\prime}}=\widehat{S}^{\widetilde{G^{\prime}}}$ on $SIC(\widetilde{G^{\prime}_{V}},\widetilde{\zeta^{\prime}_{V}})$ are determined inductively by the further requirement such that $I^{\mathcal{E}}(f)=I(f)$ if $G$ is a quasi-split group.
Here \[\iota(G,G^{\prime})=|\Out_{G}(G^{\prime})|^{-1}|Z(\widehat{G^{\prime}})^{\Gamma}/Z(\widehat{G})^{\Gamma}|^{-1}, \] $\mathcal{E}^{0}_{\elll}(G)=\mathcal{E}_{\elll} \backslash \{G\},$ and
\[\varepsilon(G)=\begin{cases}1&\text{if $G$ is quasi-split},\\
0 &\text{otherwise}.
\end{cases}\]

\bigskip
 For $G$ general, we have $I(f)=I^{\mathcal{E}}(f)$ \cite[\S 6]{A11}. If $G$ is quasi-split, we have a geometric expansion \cite[(10.11)]{A11}
\[S^{G}(f)=\sum_{M\in\mathcal{L}}|W^{M}_{0}||W^{G}_{0}|^{-1}(-1)^{\dimm(A_{M}/A_{G})}\int_{\Delta_{G-\reg,\elll}(\widetilde{M^{\prime}},V,\widetilde{\zeta^{\prime}})}n(\delta)^{-1}S^{G}_{M}(\delta,f)\,d\delta.\]
If $f=f_{1}\times\overline{f_{2}}$, $f_{1}\in C_{\cusp}(G,\zeta)$, $f_{2}\in C(G,\zeta)$,
 then \[ S^{G}_{M}(\delta,f_{1}\times\bar{f_{2}})=S^{G}_{M}(\delta,f_{1})\times \overline{f^{G}_{2}(\delta)}.\]

The spectral side of the local trace formula was stabilized in \cite{A8}, when the test function $f\in C_{\cusp}(G,\zeta)\times C(G,\zeta^{-1})$. But it is just a formal formula that matches the geometric side. The point of the present work is to directly construct the spectral side of the stable local trace formula. We need to review a few facts about the spectral side of invariant local trace formula, before constructing a stabilization.

\bigskip
The irreducible tempered representations could well be regarded as the objects dual to semisimple conjugacy classes in $G(\R)$. It is better to take the family of virtual characters \cite{A5}, which is parametrized by a set $T(G,\zeta)$.

\begin{definition}
$T(G,\zeta)$ is the set of $W_{0}$ orbits of the triplet  $\tau=(M,\pi,r), M\in\mathcal{L},\pi\in\Pi_{2}(M,\zeta),r\in\widetilde{R_{\pi}}$, where $\Pi_{2}(M,\zeta)$ stands for the equivalence classes of irreducible unitary representations of $M(\R)$ which are square integrable modulo the center, whose central character is $\zeta$, and $\widetilde{R_{\pi}}$ is a fixed central extension
 $$ 1 \longrightarrow  \widetilde{Z_{\pi}} \longrightarrow \widetilde{R_{\pi}} \longrightarrow R_{\pi}\longrightarrow 1$$ of the $R$-group of $\pi$.
 \end{definition}
The purpose of the extension is to ensure that the normalized intertwining operators, $r\mapsto\widetilde{R_{P}}(r,\pi), r\in\widetilde{R_{\pi}}, P\in\mathcal{P}(M)$ for the induced representation $I_{P}(\pi)$, give a representation of $\widetilde{R_{\pi}}$ instead of just a projective representation of $R_{\pi}$. However, in the Archimedean case, $R_{\pi}$ is a product of groups $\Z/2\Z$, the cocycle which defines $\widetilde{R_{\pi}}$ splits, so we take $\widetilde{R_{\pi}}=R_{\pi}$.

There is a bijection $\rho\mapsto\pi_{\rho}$ from $\Pi(R_{\pi})$ the set of irreducible representations of $R_{\pi}$ onto the set of irreducible constituents of $I_{P}(\pi)$, with the properties that
\begin{equation}
\Theta(\tau,f)=\tr(R_{P}(r,\pi)I_{P}(\pi,f))=\sum_{\rho\in\Pi(R_{\pi})}\tr(\rho^{\vee}(r))\tr(\pi_{\rho}(f)),
\end{equation}
and
\begin{equation}\label{eq:triv}
\tr(\pi_{\rho}(f))=|R_{\pi}|^{-1}\sum_{r\in R_{\pi}}\tr(\rho(r))\tr(R_{P}(r,\pi)I_{P}(\pi,f)).
\end{equation}

\bigskip
We can write \[T(G,\zeta)=\coprod_{\{M\}}(T_{\elll}(M,\zeta)/W(M)),\]
where $T_{\elll}(M,\zeta)=\{\tau=(M',\pi',r)\in T(M,\zeta):\mathfrak{a}^{r}_{M'}=\mathfrak{a}_{M},r\in R_{\pi}\}$. $\{M\}$ as usual runs over the orbits in $\mathcal{L}/W^{G}_{0}$. If $\tau=(M,\pi,r)$ is any triplet, the isotropy subspace $\mathfrak{a}^{r}_{M}$ of $\mathfrak{a}_{M}$ equals $\mathfrak{a}_{L}$ for some $L$. There is an action
\[\tau\mapsto\tau_{\lambda}=(M,\pi_{\lambda},r),\tau\in T_{ell}(L,\zeta),\lambda\in i\mathfrak{a}^{\ast}_{L,Z}\]
of $i\mathfrak{a}^{\ast}_{L,Z}$ on $T_{\elll}(L,\zeta)$, where $\pi_{\lambda}(x)=\pi(x)e^{\lambda(H_{M}(x))}$ for any $x\in M(\R)$.
 This gives the structure for $T(G,\zeta)$ a disjoint union of finite quotients of compact tori. We will write $T(G,\zeta)_{\C}$ to be the disjoint union over $\{M\}$ of the spaces of $W(M)$ orbits in $T_{\elll}(M,\zeta)_{\C}=\{\tau_{\lambda}:\tau\in T_{\elll}(M,\zeta),\lambda\in \mathfrak{a}^{\ast}_{M,Z,\C}\}$, where $\mathfrak{a}^{\ast}_{M,Z}$ is the subspace of linear forms on $\mathfrak{a}_{M}$ which are trivial on the image of $\mathfrak{a}_{Z}$ in $\mathfrak{a}_{M}$, and $\mathfrak{a}^{\ast}_{M,\C}=X(M)_{\R}\otimes \C$,  where $X(M)$ is the set of rational characters on $M$.

\bigskip
 We denote $T_{\disc}(G,\zeta)$ for a set of orbits $(M,\pi,r)$ in $T(G,\zeta)$ such that $W_{\pi}(r)_{\reg}$ is not empty. Then
 \[ T_{\elll}(G,\zeta)\subset T_{\disc}(G,\zeta) \subset T(G,\zeta). \]

 Let $I(G(\R),\zeta)$ be the space of functions
 \[ \alpha :T(G,\zeta)\to \C,\]
which satisfy the following three conditions;
\begin{enumerate}
\item  $\alpha$ is supported on finitely many components of $T(G,\zeta)$,
\item $\alpha$ is symmetric under $W^{G}_{0}$,
\item $\alpha\in S(T(G,\zeta))$.
\end{enumerate}
Here $S(T(G,\zeta))$ is the space of smooth functions $\alpha$ on $T(G,\zeta)$, such that for each $M\in\mathcal{L}$, each integer $n$ and each invariant differential operator $D=D_{\lambda}$ on $i\mathfrak{a}^{\ast}_{M,Z}$  transferred in the obvious way $D_{\tau}\alpha(\tau)=lim_{\lambda\to 0}D_{\lambda}\alpha(\tau_{\lambda}), \tau\in T_{\elll}(M,\zeta)$ to $T_{\elll}(M,\zeta)$, and such that the semi-norm
\[\|\alpha\|_{M,D,n}=Sup_{\tau\in T_{\elll}(M,\zeta)}(|D_{\tau}\alpha(\tau)|(1+\|\mu_{\tau}\|)^{n})\]
is finite, where $\mu_{\tau}=\mu_{\pi}$ for $\tau=(M,\pi,r)$.
$\mu_{\pi}$ is the linear form determined by the infinitesimal character of $\pi$. Then there is a natural topology which makes $I(G(\R),\zeta)$ into a complete topological vector space. By means of the inversion formula \eqref{eq:triv}, we can identify $I(G(\R),\zeta)$ with the topological vector space of functions on $\Pi_{\temp}(G(\R),\zeta)$, and also denoted by $I(G(\R),\zeta)$.

\bigskip
  The trace Paley-Wiener theorem \cite{A6} is equivalent to the assertion that the map which sends $f\in C(G(\R),\zeta)$ to the function $f_{G}(\tau)=\Theta(\tau,f)$ is an open, continuous and surjective linear transformation from $C(G(\R),\zeta)$ onto $I(G(\R),\zeta)$. Observe that if $\tau\in T_{\elll}(G,\zeta)$, there is a function $f\in C(G(\R),\zeta)$ with $f_{G}(\tau)=1$, and such that $f_{G}$ vanishes away from the $i\mathfrak{a}^{\ast}_{G,Z}$ orbit of $\tau\in T(G,\zeta)$. We call such a function $f$  for a pseudo-coefficient of $\tau$.

\bigskip
Stabilization of the spectral side of local trace formula depends on Shelstad's works. She \cite{S2}, \cite{S3} directly constructs the spectral transfer factors and obtains the transfer theorem. The adjoint relation on $K$-groups and the structure of tempered $L$-packets are given in \cite{S4}.

\bigskip
  We denote by $\Pi_{\temp}(G,\zeta)$ the set of tempered representations, with central character for $\zeta$. We have $\Pi_{\temp}(G,\zeta)=\coprod_{\{M\}}\Pi_{2}(M,\zeta)/W(M)$, here $\{M\}$ for the set of $W^{G}_{0}$-orbits of Levi subgroups of $G$.

 The Langlands parameter $\phi: W_{\R}\to {}^{L}G$ is an $L$-homomorphism, which maps from $W_{\R}$ into the $L$-group ${}^{L}G$. We denote $\Phi(G)$ for the set of $\widehat{G}$-orbits of parameters which are tempered, which means that the image of $W_{\R}$ in $\widehat{G}$ is bounded. We denote $\Phi_{2}(G)$ for the subset of parameters in $\Phi(G)$ which are cuspidal. The cuspidal condition means that the image of $W_{\R}$ is contained in no proper parabolic subgroup. There is a canonical decomposition
\[ \Phi(G)= \coprod_{\{M\}}(\Phi_{2}(M)/W(M)).\]

 For any $\phi$, we denote $S_{\phi}$ as the centralizer of the image of $\phi$ in $\widehat{G}$, and $\mathcal{S}_{\phi}$ stands for the group of connected components in $\bar{S}_{\phi}=S_{\phi}/Z(\widehat{G})^{\Gamma}$. We say that $\phi$ is elliptic, if $\bar{S}_{\phi,s}$ is finite for some semisimple element $s\in \bar{S}_{\phi}$. For any parameter $\phi\in\Phi(G)$, we denote the central character $\zeta$ for $\phi$, whose Langlands parameter is just the composition
 \[ W_{F}\xrightarrow{\phi} {}^{L}G\rightarrow {}^{L}Z .\]

 The entire set $\Phi(G)$ decomposes into a disjoint union of the subsets $\Phi(G,\zeta)$. The set $\Phi_{2}(G,\zeta)$ also comes with an action $\phi\mapsto\phi_{\lambda}=\phi\circ\rho_{\lambda}$ of $i\mathfrak{a}^{\ast}_{G,Z}$, where $\rho_{\lambda}\in H^{1}(W_{\R},Z(\widehat{G})^{\Gamma})$, which corresponds to the character $\pi_{\rho_{\lambda}}(x)=\mathnormal{e}^{\lambda(H_{G}(x))}$.

\bigskip
 Given $s\in \bar{S}_{\phi}$, we attach an endoscopic data $G^{\prime}=G^{s}=(G^{s},\mathcal{G}^{s},s,\xi^{s})$. Where $\mathcal{G}^{s}$ is the subgroup of ${}^{L}G$ generated by $\Cent(s,\widehat{G})^{\circ}$ and the image of $\phi$. $\xi^{s}$ is the inclusion $\mathcal{G}^{s}\hookrightarrow{}^{L}G$ and $G^{s}$ is a quasi-split group. Usually, $\mathcal{G}^{s}$ need not be an $L$-group, namely that there might not be an $L$-isomorphism from $\mathcal{G}^{s}$ to ${}^{L}G^{s}$ which is the identity component of $\widehat{G^{\prime}}$. To deal with this problem, we need to make a $z$-extension $\widetilde{G^{\prime}}$ of $G^{\prime}$. For simplicity, we assume $\widetilde{G^{\prime}}=G^{\prime}$, for any $G^{\prime}$.

\bigskip
 Shelstad established the spectral transfer mapping, which is given by a linear combination
\begin{equation}\label{eq:tff}
f^{\prime}(\phi^{\prime})= \sum_{\pi\in\Pi_{\temp}(G(\R))}\Delta(\phi^{\prime},\pi)f_{G}(\pi)
\end{equation}
of irreducible tempered characters $f_{G}(\pi)=\tr(\pi(f)), \pi\in\Pi_{\temp}(G,\zeta),$ on $G(\R)$. The coefficients are spectral transfer factors $\Delta(\phi^{\prime},\pi)$. They are established explicitly by Shelstad in \cite{S3}, which are compatible with the geometric transfer factors. We assume implicitly that the Langlands parameter $\phi$ is relevant to $G$, in the sense that if its image is contained in a parabolic subgroup ${}^{L}P\subset {}^{L}{G}$, then ${}^{L}P$ is dual to a $\Q$-rational parabolic subgroup $P\subset G$. It then gives rise to the $L$-packet $\Pi_{\phi}$ that was an integral part of Langlands's classification of representations of real groups \cite{L2}. $\Pi_{\phi}$ is a finite subset of representations in $\Pi_{\temp}(G,\zeta)$ whose constituents have the same local $L$-functions and $\varepsilon$-factors, and $\Pi_{\temp}(G,\zeta)$ is a disjoint union over $\phi\in \Phi(G,\zeta)$ of the subsets $\Pi_{\phi}$. Shelstad established that for any $\phi$, the distribution
\[ f^{G}(\phi)=\sum_{\pi\in\Pi_{\phi}}f_{G}(\pi)\]
is stable, in the sense that it depends only on the image $f^{G}$ of $f$ in $S(G,\zeta)$. Here $S(G,\zeta)=\{f^{G}:f\in C(G,\zeta)\}$, and $f^{G}$ is the stable orbital integral
\[f^{G}(\delta)=|D(\delta)|^{1/2}\int_{G_{\delta}(\R)\backslash G(\R)} f(x^{-1}\delta x)\,dx =\sum_{\gamma\to\delta}f_{G}(\gamma).\]

When applied to $G^{\prime}$ instead of $G$, this gives the left hand side of \eqref{eq:tff}. We also assume that the given pair $(G^{\prime},\phi^{\prime})$ is relevant to $G$, in the sense that the composite Langlands parameter $\phi=\xi^{\prime}\circ \phi^{\prime}:W_{\R}\to{}^{L}G$ is relevant to $G$. Then we have a bijection mapping
\begin{equation} \label{eq:tfdt}
(G^{\prime},\phi^{\prime})\to(\phi,s),
\end{equation}
where $s\in \bar{S}_{\phi}$.

\bigskip
Shelstad also established the inversion of the transfer mapping, which is given by a linear combination \[f_{G}(\pi)=\sum_{s_{\ssc}}\Delta(\pi,\phi^{s})f^{\prime}(\phi^{s})\]
of stable characters on endoscopic groups, where $s_{\ssc}\in \widetilde{\mathcal{S}_{\phi}}$ comes from an extension
\[ 1\rightarrow\widehat{Z}_{\ssc}\rightarrow\widetilde{\mathcal{S}_{\phi}}=\pi_{0}(S_{\phi,\ssc}) \rightarrow\mathcal{S_{\phi}} \rightarrow 1,\]
where $S_{\phi,\ssc}$ is the preimage of $\bar{S}_{\phi}$ in $\widehat{G}_{\ssc}$, the simply connected cover the derived group of $\widehat{G}$ and $\widehat{Z}_{\ssc}=Z(\widehat{G}_{\ssc})$, $\phi^{s}$ stands for the parameter $\phi^{\prime}$ that corresponds to the pair $(\phi,s)$ under the map \eqref{eq:tfdt}. The inversion of transfer mapping rests on explicit adjoint relations for spectral transfer factors $\Delta_{\spec}$ defined initially as a product $\Delta_{I}\Delta_{II}\Delta_{III}$ in the $G$-regular case in \cite{S3}, and we have adjoint relations
\[ \sum_{s_{\ssc}} \Delta(\pi,\phi^{s})\Delta(\phi^{s},\pi^{\prime})=\delta(\pi,\pi^{\prime}),\]
where the sum is over semisimple representative $s_{\ssc}$ for \[\widetilde{\mathcal{S}_{\mathcal{\phi}}}/\Ker(\widetilde{\mathcal{S}_{\mathcal{\phi}}}\to\mathcal{S}_{\phi})\simeq\mathcal{S}_{\phi},\] $\delta(.,.)$ denotes the Kronecker delta function. If $\pi$ and $\phi^{s}$ correspond to the same $L$-parameter $\phi$, then \[\Delta(\pi,\phi^{s})=\frac{1}{n(\pi)}\Delta(\phi^{s},\pi)^{-1},\]
and \[\Delta(\phi^{s},\pi)^{-1}=\overline{\Delta(\phi^{s},\pi)}/\|\Delta\|^{2},\] $n(\pi)=|\mathcal{S_{\phi}}|$
is the cardinality of the $L$-packet of $\pi$, $\|\Delta\|=|\Delta(\phi^{s},\pi)|$ is a constant (independent s), which is compatible with geometric transfer factor. However, the geometric transfer factor is unitary, so we take $\|\Delta\|=1$.

\bigskip
We also obtain the other adjoint relations
\[ \sum_{\pi\in \Pi_{\phi}} \Delta(\phi^{s},\pi)\Delta(\pi,\phi^{s^{\prime}})=\delta(s,s^{\prime}),\]
where $\phi=\xi^{s}\circ\phi^{s}$,  $s,s^{\prime}$ in $\mathcal{S}_{\phi}$.

So we can define an inversion adjoint transfer factor
 \[\Delta(\tau,\phi^{s})=\sum_{\chi\in \widehat{R}_{\pi}}\overline{\chi(r)}\Delta(\pi^{\chi},\phi^{s}),\]
 where $\overline{\chi(r)}=\tr(\rho^{\vee}(r))$.
(Recall that $R_{\pi}$ is finite abelian group.)

 We obtain
 \begin{equation} \label{eq:ivstf}
 \Theta(\tau,f)=\sum_{s\in\mathcal{S}_{\phi}}\Delta(\tau,\phi^{s})f^{\prime}(\phi^{s}).
 \end{equation}

\bigskip
However, when we define the transfer factor
$\Delta(\phi^{s},\tau)$, we need to assume that the Langlands parameter $\phi$ to be elliptic. This means that $\Pi_{\phi}$ contains elliptic representations. $\phi$ factors through a discrete parameter for a cuspidal Levi subgroup ${}^{L}M$, and so through ${}^{L}T_{M}$ \cite{L2}, where $T_{M}$ is the maximal torus which is compact modulo the center of $M$.  We consider the associated short exact sequence \cite{S3}
$$1\rightarrow\mathcal{E}(T_{M})\rightarrow\mathcal{ S}_{\phi}\rightarrow R_{\phi}\rightarrow 1,$$
where $R_\phi$ is the Langlands $R$-group, and the group $\mathcal{E}(T_{M})$ is isomorphic to $\mathcal{S}_{\phi_{M}}$, where $M$ is a Levi subgroup of $G$, and $\phi_{M}:W_{\R}\to {}^{L}M$ is a Langlands parameter for $M$ whose image in ${}^{L}G$ equals $\phi$, and whose L-packet $\Pi_{\phi_{M}}$ consists of representations in the discrete series of $M(\R)$.

\bigskip
Suppose $\pi_{M}\in\Pi_{\phi_{M}}$ corresponds to the character $\chi$ on the group $\mathcal{S}_{\phi_{M}}$. Since $\mathcal{S}_{\phi}$ is an abelian group, $R_{\chi}$ equals the full group $R_{\phi}$, where $R_{\chi}$ is the subgroup of elements in $R_{\phi}$ that stabilize $\chi$, and $\chi$ extends to a character $\theta$ on $\mathcal{S}_{\phi}$. The set of such extension $\theta$ is a torsor under the action of the characters in $R_{\phi}$. It corresponds to subset $\Pi_{\phi,\pi_{M}}$ of $\Pi_{\phi}$, composed of the irreducible constituents of the induced representation $I^{G}_{P}(\pi_{M})$, where $P$ belongs to the set $\mathcal{P}(M)$ of parabolic subgroups of $G$ with Levi component $M$. So we have $|R_{\pi_{M}}|=|R_{\phi}|$.

\bigskip
 On the other hand, we identity the stabilizer $W_{\phi}$ of $\phi_{M}$ with a subgroup of $W(M)$, where $W_{\phi}=W(S_{\phi},A_{\widehat{M}})$, which is to say, the group of automorphisms of $A_{\widehat{M}}$ induced from $\bar{S}_{\phi}$, and $A_{\widehat{M}}=(Z(\widehat{M})^{\Gamma})^{\circ}$. This Weyl group contains the stabilizer $W_{\pi_{M}}$ of $\pi_{M}$. It is a consequence of the disjointness of tempered $L$-packets for $M$ that $W_{\phi}$ contains $W_{\pi_{M}}$. From the above discussion, we know that any element in $W_{\phi}$ stabilizes $\pi_{M}$. Therefore $W_{\phi}$ equals $W_{\pi_{M}}$. Moreover, we know that elements in the subgroup $W^{0}_{\phi}$ of $W_{\phi}$ give scalar intertwining operators for the induced representation $I^{G}_{P}(\pi_{M})$, where $W^{\circ}_{\phi}=W(S^{\circ}_{\phi},A_{\widehat{M}})$ is to the normal subgroup of automorphisms in $W_{\phi}$ that are induced from the connected component $\overline{S}^{\circ}_{\phi}$. It follows that $W^{0}_{\phi}$ is contained in the subgroup $W^{0}_{\pi_{M}}$ of $W_{\pi_{M}}$. We have a surjective mapping
\[R_{\phi}\cong W_{\phi}/W^{0}_{\phi}\mapsto W_{\pi_{M}}/W^{0}_{\pi_{M}}\cong R_{\pi_{M}}, \quad \pi_{M}\in\Pi_{\phi_{M}},\]
 for any elliptic parameter $\phi\in \Phi_{\elll}(G,\xi)$. So we obtain $R_{\pi_{M}}=R_{\phi}, W^{\circ}_{\pi_{M}}=W^{\circ}_{\phi},$ and $W_{\pi_{M}}=W_{\phi}$. We define the subset $T_{\phi}=\{(M,\pi,r):M= M_{\phi}, \pi\in \Pi_{\phi_{M}}, r\in R_{\phi}\}$ of $T(G)$, then $|T_{\phi}|=|\Pi_{\phi_{M}}||R_{\phi}|=|\mathcal{S}_{\phi}|=|\Pi_{\phi}|$. And we get a bijection from $T_{\phi}$ to $\mathcal{S}_{\phi}$.

\bigskip
 We can define the adjoint transfer factor
 \[ \Delta(\phi^{s},\tau)=\sum_{\chi\in\widehat{R}_{\pi}}\frac{1}{|R_{\pi}|}\chi(r)\Delta(\phi^{s},\pi^{\chi}),\]
where $\tau=(M,\pi,r)$,  $R_{\pi}$ is a $2$-group, and $\pi^{\chi}$ is the irreducible component of the induced representation of $\pi$, which corresponds to $\chi$ by Arthur's classification Theorem in \cite[\S 2]{A5}.
 So we obtain
 \begin{equation} \label{eq: stabtf}
 f^{\prime}(\phi^{s})=\sum_{\tau\in T_{\phi}} \Delta(\phi^{s},\tau)\Theta(\tau,f).
 \end{equation}

 \bigskip
The transfer factors have the following properties.
\begin{proposition} \label{prop:trsfr}
If $\phi$ is elliptic, then
\begin{enumerate}
\item \[\Delta(\tau,\phi^{s})=\frac{|R_{\phi}|}{|\mathcal{S}_{\phi}|}\overline{\Delta(\phi^{s},\tau)},\]
\item we have the adjoint relations,
\begin{align}
 \sum_{\tau\in T_{\phi}}\Delta(\phi^{s_{1}},\tau)\Delta(\tau,\phi^{s_{2}})&= \delta(\phi^{s_{1}},\phi^{s_{2}}), \label{equation 5.21}\\
 \sum_{s\in \mathcal{S}_{\phi}}\Delta(\tau,\phi^{s})\Delta(\phi^{s},\tau_{1}) &= \delta(\tau,\tau_{1}).\label{equation 5.22}
\end{align}
\end{enumerate}
\end{proposition}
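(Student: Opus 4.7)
My plan is to reduce each of the three identities to Shelstad's representation-level adjoint relations by plugging in the definitions of $\Delta(\tau,\phi^{s})$ and $\Delta(\phi^{s},\tau)$ and using orthogonality of characters on $R_{\phi}=R_{\pi}$, together with the unitarity $\|\Delta\|=1$ of the spectral transfer factors.

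For part $(1)$, I would start from the definition
\[
\Delta(\tau,\phi^{s})=\sum_{\chi\in \widehat{R}_{\pi}}\overline{\chi(r)}\,\Delta(\pi^{\chi},\phi^{s})
\]
and rewrite each $\Delta(\pi^{\chi},\phi^{s})$ via the identity $\Delta(\pi,\phi^{s})=\tfrac{1}{n(\pi)}\Delta(\phi^{s},\pi)^{-1}=\tfrac{1}{|\mathcal{S}_{\phi}|}\,\overline{\Delta(\phi^{s},\pi)}$, which is allowed because $\|\Delta\|=1$ and $n(\pi)=|\mathcal{S}_{\phi}|$. Pulling out the factor $\tfrac{1}{|\mathcal{S}_{\phi}|}$ and taking the complex conjugate outside the sum gives
\[
\Delta(\tau,\phi^{s})=\tfrac{|R_{\pi}|}{|\mathcal{S}_{\phi}|}\,\overline{\tfrac{1}{|R_{\pi}|}\sum_{\chi}\chi(r)\Delta(\phi^{s},\pi^{\chi})}=\tfrac{|R_{\pi}|}{|\mathcal{S}_{\phi}|}\,\overline{\Delta(\phi^{s},\tau)},
\]
and then substitute $|R_{\pi}|=|R_{\phi}|$ from the discussion preceding the statement.

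For the first adjoint relation \eqref{equation 5.21}, I would expand both factors and interchange sums so that the inner sum runs over $r\in R_{\phi}$. After grouping, the inner sum $\sum_{r\in R_{\phi}}\chi_{1}(r)\overline{\chi_{2}(r)}$ produces $|R_{\phi}|\,\delta(\chi_{1},\chi_{2})$ by orthogonality, cancelling the $\tfrac{1}{|R_{\pi}|}$ in the definition of $\Delta(\phi^{s_{1}},\tau)$. The sum collapses to
\[
\sum_{\pi_{M}\in\Pi_{\phi_{M}}}\sum_{\chi}\Delta(\phi^{s_{1}},\pi^{\chi})\Delta(\pi^{\chi},\phi^{s_{2}})=\sum_{\pi'\in\Pi_{\phi}}\Delta(\phi^{s_{1}},\pi')\Delta(\pi',\phi^{s_{2}}),
\]
where I use the bijection $(\pi_{M},\chi)\leftrightarrow\pi'$ between $\Pi_{\phi_{M}}\times\widehat{R}_{\phi}$ and $\Pi_{\phi}$ established via induced representations in the elliptic setting. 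Shelstad's adjoint relation then gives $\delta(\phi^{s_{1}},\phi^{s_{2}})$.

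For \eqref{equation 5.22}, I would again substitute the definitions and swap the order of summation so that Shelstad's other adjoint relation $\sum_{s\in\mathcal{S}_{\phi}}\Delta(\pi^{\chi_{1}},\phi^{s})\Delta(\phi^{s},\pi_{1}^{\chi_{2}})=\delta(\pi^{\chi_{1}},\pi_{1}^{\chi_{2}})$ can be applied directly. The Kronecker delta forces $\pi=\pi_{1}$ and $\chi_{1}=\chi_{2}$, after which the character sum $\tfrac{1}{|R_{\pi}|}\sum_{\chi}\chi(r_{1})\overline{\chi(r)}=\delta(r,r_{1})$ by orthogonality on the finite abelian group $R_{\pi}$ produces $\delta(\tau,\tau_{1})$. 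The main delicate point here, and the place I would be most careful, is the bijection $(\pi_{M},\chi)\leftrightarrow\pi'$: I would invoke the facts established above that $R_{\pi_{M}}=R_{\phi}$ and $W_{\pi_{M}}=W_{\phi}$ in the elliptic case, which guarantee that each $\pi'\in\Pi_{\phi}$ arises uniquely as an irreducible constituent $\pi_{M}^{\chi}$, so the character orthogonality argument genuinely matches the representation-level delta function.
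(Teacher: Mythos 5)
Your proof of part (1) matches the paper's argument essentially verbatim: both expand $\Delta(\tau,\phi^s)$ via the definition, convert $\Delta(\pi^\chi,\phi^s)$ to $\tfrac{1}{|\mathcal S_\phi|}\overline{\Delta(\phi^s,\pi^\chi)}$ using $\|\Delta\|=1$ and $n(\pi)=|\mathcal S_\phi|$, and compare with $\overline{\Delta(\phi^s,\tau)}$.

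For part (2), however, you take a genuinely different route from the paper. The paper's proof is a shortcut: it composes the two expansions $\Theta(\tau,f)=\sum_s\Delta(\tau,\phi^s)f'(\phi^s)$ and $f'(\phi^{s_1})=\sum_\tau\Delta(\phi^{s_1},\tau)\Theta(\tau,f)$ to get $f'(\phi^{s_1})=\sum_s\bigl(\sum_\tau\Delta(\phi^{s_1},\tau)\Delta(\tau,\phi^s)\bigr)f'(\phi^s)$, and then appeals to linear independence of the stable characters $\{f'(\phi^s)\}_{s\in\mathcal S_\phi}$ to read off the Kronecker delta. Your proof instead expands the matrix product directly, interchanges summation so that orthogonality of characters on the finite abelian group $R_\phi$ collapses the double sum over $\chi_1,\chi_2$, and then invokes Shelstad's representation-level adjoint relations $\sum_{\pi\in\Pi_\phi}\Delta(\phi^s,\pi)\Delta(\pi,\phi^{s'})=\delta(s,s')$ (for \eqref{equation 5.21}) and $\sum_{s}\Delta(\pi,\phi^s)\Delta(\phi^s,\pi')=\delta(\pi,\pi')$ (for \eqref{equation 5.22}). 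The two proofs ultimately rest on the same underlying facts --- the paper's linear independence of the $f'(\phi^s)$ is essentially equivalent to invertibility of the Shelstad matrix, which is what the adjoint relations express --- but your direct computation makes the dependency structure fully explicit and, in particular, surfaces the role of the bijection $(\pi_M,\chi)\leftrightarrow\pi'\in\Pi_\phi$ (i.e. $\Pi_{\phi_M}\times\widehat{R}_\phi\cong\Pi_\phi$), which you are right to flag as the delicate point and which you correctly justify from $R_{\pi_M}=R_\phi$, $W_{\pi_M}=W_\phi$, and the disjointness of cuspidal supports. The paper's proof is shorter but buries these ingredients inside the linear-independence appeal; yours is longer but self-contained and more robust. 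Both are correct.
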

\begin{proof}
 We first check (1).
\[ \Delta(\tau,\phi^{s})=\sum_{\chi\in \widehat{R}_{\pi}}\overline{\chi(r)}\Delta(\pi^{\chi},\phi^{s}),\]
and
\[\overline{\Delta(\phi^{s},\tau)}=\sum_{\chi\in\widehat{R}_{\pi}}\frac{1}{|R_{\pi}|}\overline{\chi(r)}\overline{\Delta(\phi^{s},\pi^{\chi})}=\sum_{\chi\in\widehat{R}_{\pi}}\frac{n(\pi)}{|R_{\pi}|}\overline{\chi(r)}\Delta(\pi^{\chi},\phi^{s}),\]
where $|R_{\pi}|=|R_{\phi}|$, and $n(\pi)=|\mathcal{S}_{\phi}|$. We obtain the equation (1).

We now prove (2). We only check (\ref{equation 5.21}), as (\ref{equation 5.22}) is similar. Observe that $\phi$ is relevant to $G$,  and \[  f^{\prime}(\phi^{s_{1}})=\sum_{\tau\in T_{\phi}}\Delta(\phi^{s_{1}},\tau)\Theta(\tau,f)=\sum_{s\in\mathcal{S}_{\phi}}\sum_{\tau\in T_{\phi}}\Delta(\phi^{s_{1}},\tau)\Delta(\tau,\phi^{s})f^{\prime}(\phi^{s}).\]
Using the fact that the characters of representations are linear independent, then we obtain the identity (\ref{equation 5.21}).
\end{proof}

\bigskip
 We return to analyze the transfer factors
 \[ \Delta(\phi^{s},\tau)=\sum_{\chi\in\widehat{R}_{\pi}}\frac{1}{|R_{\pi}|}\chi(r)\Delta(\phi^{s},\pi^{\chi}),\]
 where $\Delta(\phi^{s},\pi^{\chi})$ is defined directly by Shelstad in \cite{S3}, and extended to a $K$-group \cite{S4}. She also checked Arthur's conjecture for the transfer factors in \cite{A12}, and obtained the formula for $\Delta(\phi^{s},\pi^{\chi})$ in \cite{S4}:
\[\Delta(\phi^{s},\pi^{\chi})=\rho(\Delta,s_{\ssc})\langle {s_{\ssc}},{\pi^{\chi}}\rangle,\]
where $\rho(\Delta,s_{\ssc})$ satisfies       $\rho(t\Delta,z_{\ssc}s_{\ssc})=t\rho(\Delta,s_{\ssc})\zeta_{G}(z_{\ssc})^{-1}$ for $t\in \C^{\times}$, $z_{\ssc}\in Z(\widehat{G}_{\ssc}$), and $s_{\ssc}$ is the preimage $s$ of the mapping $\widetilde{\mathcal{S}}_{\phi}\to \mathcal{S}_{\phi}$. $\zeta_{G}$ comes from Arthur's paper \cite{A8}. And
\[\rho(\Delta,s_{\ssc})=\zeta_{G}(s_{\ssc})^{-1}\delta(\pi^{s},\pi^{base}),  \quad \zeta_{G}(s_{\ssc})=\langle{s_{\ssc}},\pi^{base}\rangle.\]

If $G$ is quasi-split, $\rho(\Delta,s_{\ssc})=1$, so the formula for the transfer factor simplifies: \[\Delta(\phi^{s},\pi^{\chi})=\langle{s_{\ssc}},{\pi^{\chi}}\rangle.\] In section \ref{sec:se}, we will stabilize the spectral side of the invariant local trace formula when one of the component of test function is cuspidal, which is enough to give the multiplicity formula.

\bigskip
\section{Stabilization of the elliptic terms} \label{sec:se}

 We now consider the test function $f = f_1 \times \bar{f_2}$, $f_1 \in C_{\cusp}(G(\R),\zeta)$, $f_2 \in C(G(\R),\zeta)$, $G$ is a $K$-group over $\R$.
Recall that $C_{\cusp}(G(\R),\zeta)$ stands for the space of functions $f_1$ in $C(G(\R),\zeta)$ that are cuspidal, in the sense that the orbital integral
\[\gamma \mapsto f_{1,G}(\gamma) = J_G(\gamma,f_{1}),     \quad  \gamma \in \Gamma(G)\]
is supported on the subset $\Gamma_{\elll}(G)$ of elliptic classes in $\Gamma(G)$.

\bigskip
We assume that $f_1$ is cuspidal. We set
\[I_{\disc}(f) = \int_{T_{\elll}(G, \zeta)} i^G(\tau)f_{1,G}(\tau)\overline{f_{2,G}(\tau)}d\tau,\]
where $i^G(\tau) = |R_{\pi,r}|^{-1}|\de(1-r)_{\mathfrak{a}_M/\mathfrak{a}_G}|^{-1}$, $\tau = (M, \pi, r)$, $T_{\elll}(G,\zeta) = \prod_{\alpha \in \pi_0(G)} T_{\elll} (G_{\alpha}, \zeta_{\alpha})$. \\
For the given $f$, $I_{\disc}(f)$ equals the spectral side of the local trace formula. We have
\[I(f) = I_{\disc}(f). \]

\bigskip
Now we can regard $I_{\disc}$ as a linear form on the subspace
\[C_{1-\cusp} (G_{V}, \zeta_{V}) = C_{\cusp} (G,\zeta)\otimes C(G, \zeta^{-1}) \]
of $C(G_{V}, \zeta_{V})$.\\
If $f_1 \in C_{\cusp}(G, \zeta)$, then $f_1$ is supported on $T_{\elll}(G, \zeta)$. For stabilization, we need to consider Langlands parameters $\phi$ which are elliptic. We can define a corresponding set
 \[\Phi_{\elll}^{\mathcal{E}}(G,\zeta) = \{(G^{\prime}, \phi^{\prime}): G^{\prime} \in \mathcal{E}_{\elll}(G), \phi^{\prime} \in \Phi_2(G^{\prime}, G,\zeta) \}, \]
where $\Phi_2(G^{\prime}, G, \zeta) = \Phi_2(G, \zeta)/\Out_{G}(G^{\prime})$, $\Out_G(G^{\prime}) = \Aut_G(G^{\prime})/\xi^{\prime}(\widehat{G^{\prime}})$, and
\[\Aut_G(G^{\prime}) = \{ g\in \widehat{G}: gs^{\prime}g^{-1} \in s^{\prime}Z(\widehat{G}), g\mathcal{G}^{\prime}g^{-1} = \mathcal{G}^{\prime} \}.\]
We denote $SI_{\cusp}(G^{\prime},\zeta)$ for the set of linear forms $f^{\prime}(\phi^{\prime})$ on $\Phi_{2}(G^{\prime}, \zeta)$ obtained from the transfer map. $f^{\prime}(\phi^{\prime})$ depends only on the image of $\phi^{\prime}$ in $\Phi_{2}(G^{\prime}, G, \zeta)$, which is the set of $\Out_{G}(G^{\prime})$-orbit in $\Phi_{2}(G^{\prime}, \zeta)$ \cite{S4}. If $f^{\prime}(\phi^{\prime}) \in SI_{\cusp}(G^{\prime},\zeta)$, then $f^{\prime}(\phi^{\prime})$ is supported on $\Phi_{2}(G^{\prime}, G, \zeta)$.

\bigskip
We have
\begin{align*}
\theta(\tau,f) &=\sum_{\phi^{\prime}\in \Phi_{\elll}^{\mathcal{E}}(G)} \Delta(\tau, \phi^{\prime}) f^{\mathcal{E}}(\phi^{\prime})\\
&=\sum_{G^{\prime}\in \mathcal{E}_{\elll}(G)} \sum_{\phi^{\prime} \in \Phi_2(G^{\prime}, G, \zeta)} \Delta(\tau, \phi^{\prime}) f^{G^{\prime}}(\phi^{\prime}),
\end{align*}
and \[f^{G^{\prime}}(\phi^{\prime})=\sum_{\tau\in T_{\elll}(G,\zeta)}\Delta(\phi^{\prime},\tau)\theta(\tau,f).\]
\begin{lemma}
The transfer factors $\Delta(\tau, \phi)$ and $\Delta(\phi, \tau)$  have finite support in $\phi$ for fixed $\tau$, and finite support in $\tau$ for fixed $\phi$. Moreover,
\begin{align*}
&\sum_{\phi^{\prime}\in \Phi_{\elll}^{\mathcal{E}}(G,\zeta)} \Delta(\tau, \phi^{\prime}) \Delta(\phi^{\prime}, \tau_1) = \delta(\tau, \tau_1), &&  \tau, \tau_1 \in T_{\elll}(G,\zeta), \\
and &\sum_{\tau\in T_{\elll}(G,\zeta)} \Delta(\phi^{\prime}, \tau) \Delta(\tau, \phi_{1}^{\prime})  = \delta(\phi^{\prime}, \phi_{1}^{\prime}), && \phi^{\prime}, \phi_{1}^{\prime}\in \Phi_{\elll}^{\mathcal{E}}(G, \zeta),
\end{align*}
where $\delta(\tau, \tau_1)$ and $\delta(\phi^{\prime}, \phi_{1}^{\prime})$ are Kronecker delta functions.
\end{lemma}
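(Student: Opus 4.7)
The plan is to deduce the lemma from Proposition \ref{prop:trsfr} by combining it with a vanishing property of the spectral transfer factors across distinct Langlands parameters. First I would show that $\Delta(\tau,\phi^{s})=0$ unless $\tau\in T_{\phi}$, and symmetrically that $\Delta(\phi^{s},\tau)=0$ unless $\tau\in T_{\phi}$. Both statements follow directly from the definitions
\[\Delta(\tau,\phi^{s})=\sum_{\chi\in\widehat{R}_{\pi}}\overline{\chi(r)}\Delta(\pi^{\chi},\phi^{s}),\qquad \Delta(\phi^{s},\tau)=\sum_{\chi\in\widehat{R}_{\pi}}\frac{\chi(r)}{|R_{\pi}|}\Delta(\phi^{s},\pi^{\chi}),\]
together with Shelstad's basic property that $\Delta(\pi^{\chi},\phi^{s})$ vanishes unless $\pi^{\chi}$ lies in the $L$-packet $\Pi_{\phi}$, which happens precisely when $\pi\in\Pi_{\phi_{M}}$ with $M=M_{\phi}$.

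I would then verify the two finiteness claims. For fixed $\tau=(M,\pi,r)\in T_{\elll}(G,\zeta)$, the representation $\pi\in\Pi_{2}(M,\zeta)$ lies in a unique tempered $L$-packet $\Pi_{\phi_{M}}$; composing $\phi_{M}$ with ${}^{L}M\hookrightarrow {}^{L}G$ produces an elliptic parameter $\phi\in\Phi_{\elll}(G,\zeta)$ uniquely determined by $\tau$. Under the bijection $(G^{s},\phi^{s})\leftrightarrow(\phi,s)$ of \eqref{eq:tfdt}, the only $\phi'$ with $\Delta(\tau,\phi')\ne 0$ come from $s\in\mathcal{S}_{\phi}$, a finite set since $\phi$ is elliptic. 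Symmetrically, for fixed $\phi'=(G^{s},\phi^{s})$ the set of $\tau$ with $\Delta(\phi',\tau)\ne 0$ is contained in the finite set $T_{\phi}$.

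For the adjoint relations I would reindex
\[\sum_{\phi'\in\Phi_{\elll}^{\mathcal{E}}(G,\zeta)}\Delta(\tau,\phi')\Delta(\phi',\tau_{1})=\sum_{\phi}\sum_{s\in\mathcal{S}_{\phi}}\Delta(\tau,\phi^{s})\Delta(\phi^{s},\tau_{1}),\]
with the outer sum running over $\phi\in\Phi_{\elll}(G,\zeta)$. By the vanishing from the first step, contributions survive only when both $\tau$ and $\tau_{1}$ lie in the same $T_{\phi}$, and $\pi$ pins down a unique such $\phi$. If $\tau_{1}\notin T_{\phi}$ then every summand vanishes and the right side equals $0=\delta(\tau,\tau_{1})$; otherwise the inner sum collapses to $\delta(\tau,\tau_{1})$ by \eqref{equation 5.22}. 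The second adjoint relation is handled symmetrically, with the sum over $\tau\in T_{\elll}(G,\zeta)$ localizing to $T_{\phi}$ and \eqref{equation 5.21} finishing the computation. The main technical obstacle is ensuring that the passage from $\phi'\in\Phi_{\elll}^{\mathcal{E}}(G,\zeta)$ to pairs $(\phi,s)$ is genuinely bijective: one must track the quotient by $\Out_{G}(G^{s})$ in the definition of $\Phi_{2}(G^{s},G,\zeta)$ carefully so that no endoscopic datum is over- or undercounted, which is exactly the indexing implicit in the sum over $\mathcal{S}_{\phi}$ in \eqref{equation 5.22}.
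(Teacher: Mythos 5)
Your proposal is correct and follows essentially the same route as the paper: the paper cites Shelstad's finite-support result for $\Delta(\pi,\phi)$ and says the adjoint relations are ``similar to Proposition 5.3 (2),'' which is exactly what you carry out by showing $\Delta(\tau,\phi^s)$ and $\Delta(\phi^s,\tau)$ vanish unless $\tau\in T_\phi$, reindexing the sum over $\Phi_{\elll}^{\mathcal{E}}(G,\zeta)$ as a sum over pairs $(\phi,s)$, and then localizing each global sum to a single $T_\phi$ or $\mathcal{S}_\phi$ before invoking \eqref{equation 5.21} and \eqref{equation 5.22}. Your version is just more explicit about the localization and the bookkeeping of the $\Out_G(G')$-quotient in the bijection \eqref{eq:tfdt}, which the paper leaves implicit.
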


\begin{proof}
That $\Delta(\tau, \phi)$ and $\Delta(\phi, \tau)$  have finite support, which is equivalent to saying that Shelstad's transfer factor $\Delta(\pi, \phi)$ and $\Delta(\phi, \pi)$  have finite support, following from \cite[\S7]{S3}. The proof of second part is similar to Proposition \ref{prop:trsfr} (2).
\end{proof}

\bigskip
We define a measure on $\Phi_2(G, \zeta)$ by setting
\[\int_{\Phi_2(G, \zeta)} \beta(\phi) d\phi = \sum_{\phi\in \Phi_2(G)/i\mathfrak{a}_{G, Z}^{\ast}} \int_{i\mathfrak{a}_{G, Z}^{\ast}} \beta(\phi_\lambda) d\lambda\]
for any $\beta \in C(\Phi_2(G,\zeta))$. So $\Phi_{\elll}^{\mathcal{E}}(G, \zeta)$  has the measure obtained from the quotient measures on the set $\Phi_2(G', G,\zeta)$ and the transfer factors govern the change of variables of integration.

\begin{lemma}\label{lemma:cmm}
Suppose that $\alpha \in C(T_{\elll}(G), \zeta)$, and that $\beta \in C_{\cusp}(\Phi_{\elll}^{\mathcal{E}}(G),\zeta)$. Then
\begin{align*}
&\int_{T_{\elll}(G, \zeta)} \sum_{\phi\in\Phi_{\elll}^{\mathcal{E}}(G, \zeta)} \beta(\phi) \Delta(\phi, \tau)\alpha(\tau) d\tau \\
=&\int_{\Phi_{\elll}^{\mathcal{E}}(G, \zeta)} \sum_{\tau \in T_{\elll}(G,\zeta)} \beta(\phi) \Delta(\phi, \tau)\alpha(\tau) d\phi.
\end{align*}
\end{lemma}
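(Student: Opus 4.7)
The plan is to prove this Fubini-type identity by unpacking the measure structure on both sides and using the finite-support property of transfer factors established in the preceding lemma, together with the $i\mathfrak{a}^{\ast}_{G,Z}$-equivariance of the spectral transfer factor. As a first step, I would parameterize both spaces compatibly: $T_{\elll}(G,\zeta)$ is a disjoint union of components indexed by $i\mathfrak{a}^{\ast}_{G,Z}$-orbit representatives $\tau^{\circ}$, each component a torsor under $i\mathfrak{a}^{\ast}_{G,Z}$ via $\lambda \mapsto \tau^{\circ}_\lambda$ and carrying Lebesgue measure on $i\mathfrak{a}^{\ast}_{G,Z}$; the same description applies to $\Phi_{\elll}^{\mathcal{E}}(G,\zeta)$ by the explicit definition of $d\phi$ given just before the lemma.

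Second, I would use the fact that the transfer factor $\Delta(\phi, \tau)$ is equivariant under simultaneous twisting, i.e.\ $\Delta(\phi_\lambda, \tau_\lambda) = \Delta(\phi,\tau)$, which follows from the central character compatibility built into the construction of $\Delta$ in Section \ref{sec:tc}. Combined with the finite-support statement of the preceding lemma, the support of $\Delta$ on $\Phi_{\elll}^{\mathcal{E}}(G,\zeta)\times T_{\elll}(G,\zeta)$ is a locally finite disjoint union of diagonal copies of $i\mathfrak{a}^{\ast}_{G,Z}$, one for each pair $(\phi^{\circ},\tau^{\circ})$ of orbit representatives with $\Delta(\phi^{\circ},\tau^{\circ})\neq 0$, embedded as $\lambda \mapsto (\phi^{\circ}_\lambda, \tau^{\circ}_\lambda)$. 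Using this, both the LHS and the RHS can be rewritten as the single expression
$$
\sum_{(\phi^{\circ},\tau^{\circ})} \Delta(\phi^{\circ},\tau^{\circ}) \int_{i\mathfrak{a}^{\ast}_{G,Z}} \beta(\phi^{\circ}_\lambda)\, \alpha(\tau^{\circ}_\lambda)\, d\lambda,
$$
and the equality then follows from Fubini--Tonelli. Absolute convergence is guaranteed by the Schwartz-type decay in the seminorms defining $C(T_{\elll}(G),\zeta)$ and $C_{\cusp}(\Phi_{\elll}^{\mathcal{E}}(G),\zeta)$, the boundedness of $\Delta$, and the finiteness of the sum over orbit representatives contributing to each fiber.

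The main obstacle is the careful verification of the diagonal support structure: one needs to check that the finite-support statement from the previous lemma refines to the $i\mathfrak{a}^{\ast}_{G,Z}$-equivariant picture described above, so that the ``sum'' notation on each side of the identity genuinely corresponds to summation over orbit representatives without introducing hidden multiplicity factors relative to the measures $d\tau$ and $d\phi$. Once this equivariance is in place, the Fubini step itself is routine, and the identity follows.
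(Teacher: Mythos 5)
Your proposal is correct and follows essentially the same route as the paper's proof: decompose the integrals by $i\mathfrak{a}^{\ast}_{G,Z}$-orbit representatives, use the vanishing of $\Delta(\phi_\mu,\tau_\lambda)$ off the diagonal $\mu = \lambda$ (since a nonzero transfer factor forces $\phi_\mu$ and $\tau_\lambda$ to lie over the same parameter) to collapse the twist sum, and conclude by the manifest symmetry of the resulting expression. The only inessential difference is that you additionally invoke $\Delta(\phi_\lambda,\tau_\lambda)=\Delta(\phi,\tau)$ to pull the transfer factor outside the $\lambda$-integral, whereas the paper simply leaves $\Delta(\phi_\lambda,\tau_\lambda)$ inside and appeals to symmetry directly; this equivariance is plausible but unproved in your write-up and is not needed, so you could drop it without loss.
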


\begin{proof}
According to the definition of the measure $d\tau$, we can decompose the left hand side of the required identity into an expression
\[\sum_{\tau} \int_{i\mathfrak{a}_{G,Z}^{\ast}} \sum_{\phi}\sum_{\mu} \beta(\phi_{\mu}) \Delta(\phi_{\mu},\tau_\lambda)\alpha(\tau_\lambda)d\lambda .\]
Here $\tau\in T_{\elll}(G,\zeta)/i\mathfrak{a}^{\ast}_{G,Z}$, $\phi\in\Phi^{\mathcal{E}}_{\elll}(G,\zeta)/i\mathfrak{a}^{\ast}_{G,Z},$ $\mu\in i\mathfrak{a}^{\ast}_{G,Z}.$  We recall that the transfer factor $\Delta(\phi, \tau)$ vanishes unless $\tau\in T_{\phi}$. We observe from the definition that $\phi_{\mu} = \phi\circ \rho_{\mu}$.
Here $\rho_{\mu} \in H^{1}(W_{\R}, Z(\widehat{G})^{\Gamma})$ and it corresponds to the $L$-packet which is $\Pi_{\phi_{\mu}} = \{ \pi_{\rho_{\mu}} \otimes \pi | \pi \in \Pi_{\phi}\}$, where $\pi_{\rho_{\mu}}(x) = e^{\mu(H_G(x))}$. However, $\tau_{\lambda} = (M, \pi_{\lambda}, r)$, where $\pi_{\lambda}(x) = \pi(x)e^{\lambda (H_G(x))}$. If $\Delta(\phi_{\mu},\tau_{\lambda})$ doesn't vanish, then $\phi_{\mu}$ and $\tau_{\lambda}$ correspond to the same parameter $\phi_\lambda$. So, $\mu = \lambda$. We see that the sum over $\mu$ reduces to the one element $\mu = \lambda$. The expression becomes
\[\sum_{(\tau, \phi)} \int_{i\mathfrak{a}_{G,Z}^{\ast}} \beta(\phi_{\lambda}) \Delta(\phi_{\lambda}, \tau_{\lambda})\alpha(\tau_{\lambda}) d\lambda,\]
where $(\tau, \phi)$ is summed over pairs in $(T_{\elll}(G, \zeta)\times\Phi_{\elll}^{\mathcal{E}}(G,\zeta))/i\mathfrak{a}_{G,Z}^{\ast}$.
From its obvious symmetry, we conclude that the expression must also be equal to the right hand side of the required identity. The identity is therefore valid.
\end{proof}

\bigskip
In the following, we will stabilize the elliptic terms of the spectral side of invariant local trace formula and obtain the explicit formula for the coefficients .

Assume that $f_1$ is a cuspidal function, we denote $|d(\tau)| = |d(r)| = |\de(1-r)_{\mathfrak{a}_M/\mathfrak{a}_G}|^{-1}$, $i^{G}(\tau)=|R_{\pi,r}|^{-1}|d(\tau)|^{-1}$. We need to stabilize $d(\tau)f_{1,G}(\tau)$.

\begin{lemma} \label{lemma:cf}
If $f_1 \in C_{\cusp}(G(\R), \zeta)$, which is supported on $T_{\elll}(G,\zeta)$, there exists a cuspidal function $\widetilde{f}_{1}\in C(G(R),\zeta)$, satisfying
\[\widetilde{f}_{1,G}(\tau) = d(\tau)f_{1,G}(\tau).\]
\end{lemma}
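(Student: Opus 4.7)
The plan is to use Arthur's trace Paley--Wiener theorem to produce $\widetilde{f}_1$ as a preimage of a prescribed element of $I(G(\R),\zeta)$. Specifically, I will define the function $\alpha$ on $T(G,\zeta)$ by
\[
\alpha(\tau)=\begin{cases} d(\tau)f_{1,G}(\tau), & \tau\in T_{\elll}(G,\zeta),\\ 0, & \text{otherwise},\end{cases}
\]
and show that $\alpha$ lies in the cuspidal subspace $I_{\cusp}(G(\R),\zeta)\subset I(G(\R),\zeta)$. Any preimage $\widetilde{f}_1\in C(G(\R),\zeta)$ under the surjection $f\mapsto f_G$ will then automatically be cuspidal, since its character function on $T(G,\zeta)$ is supported on $T_{\elll}(G,\zeta)$, which by the cuspidal version of the Paley--Wiener theorem matches the image of $C_{\cusp}(G(\R),\zeta)$.

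First I would verify the three defining properties of $I(G(\R),\zeta)$ for $\alpha$. Finite support on components of $T(G,\zeta)$ is inherited from $f_{1,G}$. The $W_{0}^{G}$-symmetry of $\alpha$ follows from the $W_{0}^{G}$-symmetry of $f_{1,G}$ together with the observation that $d(\tau)=\det(1-r)_{\mathfrak{a}_M/\mathfrak{a}_G}$ depends only on the $W_{0}^{G}$-orbit of the triple $(M,\pi,r)$, because Weyl conjugation carries the pair $(\mathfrak{a}_M,r)$ to a conjugate pair with the same determinant. For the Schwartz condition, the key point is that $d(\tau)$ is locally constant in the continuous parameter $\lambda\in i\mathfrak{a}_{M,Z}^{\ast}$ (it depends only on the discrete datum $r$, and $r$ ranges over a finite set on each connected component of $T_{\elll}(M,\zeta)$). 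Consequently multiplication by $d(\tau)$ commutes with any invariant differential operator in $\lambda$ and is uniformly bounded, so $\|\alpha\|_{M,D,n}\le C\,\|f_{1,G}\|_{M,D,n}$ for every $M$, $D$, $n$, and $\alpha\in S(T(G,\zeta))$.

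Having established that $\alpha\in I(G(\R),\zeta)$, I would invoke the trace Paley--Wiener theorem of Arthur, quoted in the excerpt, which asserts that the map $f\mapsto f_G$ from $C(G(\R),\zeta)$ onto $I(G(\R),\zeta)$ is surjective (and open). This yields a function $\widetilde{f}_1\in C(G(\R),\zeta)$ with $\widetilde{f}_{1,G}=\alpha$, so in particular $\widetilde{f}_{1,G}(\tau)=d(\tau)f_{1,G}(\tau)$ for every $\tau\in T_{\elll}(G,\zeta)$, and $\widetilde{f}_{1,G}$ vanishes off $T_{\elll}(G,\zeta)$. The vanishing off $T_{\elll}(G,\zeta)$, transported across the inversion formula \eqref{eq:triv} that identifies $I(G(\R),\zeta)$ with functions on $\Pi_{\temp}(G(\R),\zeta)$, is equivalent to $\tr\pi(\widetilde{f}_1)$ being supported on $\Pi_{2}(G(\R),\zeta)$, which is precisely the definition of cuspidality recalled just before Lemma~\ref{pseudocoefficient}. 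Thus $\widetilde{f}_1\in C_{\cusp}(G(\R),\zeta)$, completing the construction.

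The main technical point to pin down carefully is the compatibility of the factor $d(\tau)$ with the Schwartz seminorms defining $S(T(G,\zeta))$, together with the assertion that the image of $C_{\cusp}(G(\R),\zeta)$ under $f\mapsto f_G$ is exactly the subspace of functions in $I(G(\R),\zeta)$ supported on $T_{\elll}(G,\zeta)$. The former is essentially automatic because $d(\tau)$ is locally constant in the continuous parameter; the latter is the cuspidal refinement of the trace Paley--Wiener theorem, which is available in the Archimedean case and accounts for why $\widetilde{f}_1$ can be chosen cuspidal rather than merely Schwartz.
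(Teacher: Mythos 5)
Your proof follows exactly the same route as the paper: observe that $\tau\mapsto d(\tau)f_{1,G}(\tau)$ defines an element of $I(G(\R),\zeta)$ supported on $T_{\elll}(G,\zeta)$, and then apply Arthur's trace Paley--Wiener theorem to lift it to a cuspidal function $\widetilde f_1\in C(G(\R),\zeta)$. You merely spell out what the paper leaves implicit --- the $W_0^G$-equivariance and Schwartz estimates coming from the local constancy of $d(\tau)=\det(1-r)_{\mathfrak a_M/\mathfrak a_G}$ in $\lambda$, and the fact that vanishing of $\widetilde f_{1,G}$ off $T_{\elll}(G,\zeta)$ is precisely the cuspidality condition --- so the argument is correct and matches the paper's.
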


\begin{proof}
Notice that $\tau \to d(\tau)f_{1,G}(\tau)$ is also a function in $I(G(\R), \zeta)$, which is supported on $T_{\elll}(G,\zeta)$. Applying the trace Paley-Wiener theorem, there exists a cuspidal function $\widetilde{f_1}\in C(G(\R),\zeta)$ such that
\[\widetilde{f}_{1,G}(\tau) = \theta(\tau, \widetilde{f_1}) = d(\tau)f_{1,G}(\tau).\]

\end{proof}

\bigskip
For computing the coefficient of the spectral side of the stable local trace formula, we need to consider two cases. One is that $\pi$ is elliptic as well as regular, the other is that $\pi$ is elliptic and not regular. Consider the first case, then $\pi$ is a discrete series representation. Thus there exists $\phi$ which is cuspidal, and satisfies $\pi\in\Pi_{\phi}$. The image of $\phi$ is contained in no proper parabolic subgroup of ${}^{L}G$. For each $s_{\ssc}$ in $\mathcal{S}^{\ssc}_{\phi}$, the parameter $\phi$ is discrete, so that $\pi^{s}$ is also a discrete series representations of $G^{\prime}=(G^{\prime},s,\mathcal{G}^{\prime},\xi)$.

\bigskip
We need to recall some details of Langlands parameters $\phi$. We fixed a splitting $spl_{\widehat{G}}=(\mathcal{B},\mathcal{T},\{X_{\alpha^{\vee}}\})$ of $\widehat{G}$, where $\mathcal{B}$ is a Borel subgroup in $\widehat{G}$, $\mathcal{T}$ is a maximal torus in $\widehat{G}$ included in $\mathcal{B}$, $X_{\alpha^{\vee}}$ is a eigenvector of $\alpha^{\vee}$. Let $\iota$ be half of the sum of the positive roots of $\mathcal{T}$ in $\mathcal{B}$. Then according to \cite{L2} or \cite[\S7]{S1}, one considers pairs $(\mu,\lambda) \in (X_{\ast}(\mathcal{T})\otimes \mathbb{C})^{2}$, which parameterize Langlands parameters $\phi$, with $\phi=\phi(\mu,\lambda)$ being defined by
\[ \phi(z\times1)=z^{\mu}\bar{z}^{\sigma_{T}(\mu)}\] for $z\in \mathbb{C}^{\ast}$, and \[\phi(1\times\sigma)=e^{2\pi i\lambda}n(\sigma_{T})\times(1\times\sigma),\]
 where $T$ is a maximal torus of $G(\R)$, there is a canonical $\widehat{G}$-conjugacy class of admissible embedding $\xi: {}^{L}T\longrightarrow {}^{L}G$, then $\xi$ maps $\widehat{T}$ to $\mathcal{T}$ by the isomorphism attached to the pair $(\mathcal{B},\mathcal{T})$ and the choice of a Borel subgroup $B$ in $G$ containing $T$. There exists $h\in\widehat{G}$ such that $(h^{-1}\widehat{B}h,h^{-1}\widehat{T}h)=(\mathcal{B},\mathcal{T})$. We define $\omega_{T}(\sigma):=\Int(h^{-1}\sigma(h))$, which is an element in the Weyl group $W(\widehat{G},\mathcal{T})$, then $\sigma_{T}=\omega_{T}(\sigma)\times\sigma$. We have $n(\sigma_{T})=n(\omega_{T}(\sigma))$, and $n(\omega_{T}(\sigma))$ was defined by $\{X_{\alpha^{\vee}}\}$, as in \cite[section 2.1]{LS}. $\mu$ and $\lambda$ satisfy the property
\[\frac{1}{2}(\mu-\sigma_{T}\mu)-\iota+(\lambda+\sigma_{T}\lambda)\in X_{\ast}(\mathcal{T}).\]
Here $\mu$ is determined uniquely, while $\lambda$ is determined modulo \[X_{\ast}(\mathcal{T})+\{\nu-\sigma_{T}\nu: \nu\in X_{\ast}(\mathcal{T})\otimes\mathbb{C}\},\]
 and $\phi$ is determined uniquely up to $\mathcal{T}$-conjugacy.

\bigskip
  For the endoscopic datum $G^{\prime}=(G^{\prime},s, \mathcal{G^{\prime}},\xi^{\prime})$, where $\xi^{\prime}:\mathcal{G^{\prime}}\to {}^{L}G$ is an embedding, and $\xi^{\prime}$ can be parameterized by $(\mu^{\ast},\lambda^{\ast})$. We assume that ${}^{L}G^{\prime}$ equals $\mathcal{G^{\prime}}$, $\phi^{\prime}:W_{\mathbb{R}}\to {}^{L}G^{\prime}$ is a Langlands parameter, and $\phi^{\prime}=\phi^{\prime}(\mu^{\prime},\lambda^{\prime})$, $\phi=\xi^{\prime}\circ\phi^{\prime}$. We then have a relation:
  $ \mu=\mu^{\prime}+\mu^{\ast}$, and $\lambda=\lambda^{\prime}+\lambda^{\ast}$.
  If $\mu$ is dominant and regular, then $\pi$ is a discrete series, which belongs to $\Pi_{\phi}$, and $\pi^{\prime}\in \Pi_{\phi^{\prime}}$ is also a discrete series.
    In this case, we know the cardinal number of the $L$-packet $\Pi_{\phi}$, which equals $|\mathcal{S}_{\phi}|$ in \cite[Corollary 7.6]{S4}, and \[|\mathcal{S}_{\phi}|=|\mathcal{E}(T)|.\]
 In this case the $R$-group $R_{\phi}$ is trivial. Then we obtain
  \[\frac{|\mathcal{S}_{\phi}|}{|\mathcal{S}_{\phi^{\prime}}|}=\frac{|\mathcal{E}(T)|}{|\mathcal{E}(T^{\prime})|}=\frac{|\mathcal{K}_{T}|}{|\mathcal{K}_{T^{\prime}}|}=\frac{|Z(\widehat{G^{\prime}})^{\Gamma}|}{|Z(\widehat{G})^{\Gamma}|} . \]
Here $\mathcal{K}_{T}=\pi_{0}((\widehat{T})^{\Gamma}/Z(\widehat{G})^{\Gamma})$. Since the tori $T$ and $T^{\prime}$ are isomorphic, the third equality holds, and the second equality comes from the Tate-Nakayama duality.

\bigskip
      In the following we deal with the singular elliptic case. In other words, if $\pi\in\Pi_{\phi_{1}}$ is elliptic representation, then the Langlands parameter $\phi_{1}$ factors through a discrete parameter for a cuspidal Levi subgroup ${}^{L}M$ and so through ${}^{L}T_{M}$, where the maximal torus $T_{M}$ is compact modulo the center of $M$ \cite{L2}. Following the argument of \cite{S1}, we can construct a new Langlands parameter $\phi$ in the conjugacy class of $\phi_{1}$ and factoring through ${}^{L}T$, where $T$ is compact modulo the center of $G$. This new Langlands parameter $\phi$ will be of the form $\phi(\mu,\lambda)$ as in the first case, but now the regularity requirement on $\mu$ is not necessary, we call such a $\phi$ for a limit of discrete parameters. We obtain, by transfer of a discrete parameter $\phi^{\prime}$ for an elliptic endoscopy group, a limit of discrete parameters $\phi$ \cite[section 9]{S4}, so we obtain
    \[\frac{|\mathcal{S}_{\phi}|}{|\mathcal{S}_{\phi^{\prime}}|}=\frac{|Z(\widehat{G^{\prime}})^{\Gamma}||\Out_{G}(G^{\prime},\phi^{\prime})|}{|Z(\widehat{G})^{\Gamma}|}=\frac{|Z(\widehat{G^{\prime}})^{\Gamma}|}{|Z(\widehat{G})^{\Gamma}|} . \]
 The second equality comes from the discrete case, where $\Out_{G}(G^{\prime},\phi^{\prime})$ is the stabilizer of $\phi^{\prime}$ (as a $\widehat{G^{\prime}}$-orbit) in the finite group $\Out_{G}(G^{\prime})$. Thus we have the following lemma.

\begin{lemma} \label{lemma:cff}
If $\phi$ is elliptic, then we have the coefficient relation
\[\frac{|S_{\phi}|}{|S_{\phi^{\prime}}|}=|Z(\widehat{G^{\prime}})^{\Gamma}/Z(\widehat{G})^{\Gamma}|.\]
\end{lemma}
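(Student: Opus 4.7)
The plan is to split into two cases based on whether $\phi$ is a discrete Langlands parameter (so $\Pi_\phi$ consists of genuine discrete series) or only a limit of discrete series parameter (the general elliptic case), and reduce the second case to the first.

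First I would treat the discrete case. Here $R_\phi = 1$, so $\mathcal{S}_\phi$ coincides with the group that parametrizes the $L$-packet, and by Shelstad's classification (\cite{S4}, Corollary 7.6) one has $|\Pi_\phi| = |\mathcal{S}_\phi|$. On the other hand, the discrete series $L$-packet is parametrized by $\mathcal{E}(T)$ for a maximal torus $T$ of $G$ that is $\R$-anisotropic modulo $A_G(\R)^\circ$, giving $|\Pi_\phi| = |\mathcal{E}(T)|$. By Tate-Nakayama duality, $|\mathcal{E}(T)| = |\mathcal{K}_T|$ where $\mathcal{K}_T = \pi_0((\widehat{T})^\Gamma/Z(\widehat{G})^\Gamma)$. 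The same identification applies to the quasisplit endoscopic group $G'$ with its maximal elliptic torus $T'$, and since the endoscopic datum provides an isomorphism $\widehat{T}\cong\widehat{T'}$ compatible with the Galois action, the quotient $|\mathcal{K}_T|/|\mathcal{K}_{T'}|$ collapses to $|Z(\widehat{G'})^\Gamma|/|Z(\widehat{G})^\Gamma|$, which is the desired identity.

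Next, I would handle the singular elliptic (limit of discrete series) case by reducing to the discrete case on the endoscopic side. Following Shelstad (\cite{S4}, §9), any limit of discrete series parameter $\phi$ for $G$ arises by transfer from a discrete parameter $\phi'$ of some elliptic endoscopic datum $G'$. Applying the discrete case directly to $\phi'$ yields the correct cardinality of $|\mathcal{S}_{\phi'}|$. The comparison $|\mathcal{S}_\phi|/|\mathcal{S}_{\phi'}|$ then picks up an additional factor $|\Out_G(G', \phi')|$ coming from the fibers of the map from the set of pairs $(G',\phi')$ to Langlands parameters $\phi$ of $G$. The final step is to show that $|\Out_G(G', \phi')| = 1$: since $\phi'$ is a discrete Langlands parameter for $G'$, it factors through ${}^LT'$ for the elliptic torus $T'$ of $G'$ and is $\widehat{G'}$-regular; any element of $\Out_G(G')$ stabilizing $\phi'$ must act trivially on the image, hence is trivial. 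Combining this with the discrete case gives the result.

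The main obstacle is the second case: cleanly identifying the contribution of $\Out_G(G', \phi')$ and verifying its triviality. This requires the careful comparison in Shelstad \cite{S4}, §9 between the parametrizations on the $G$-side and the $G'$-side of endoscopic transfer, and in particular that a discrete parameter on $G'$ has no nontrivial symmetries under outer automorphisms preserving the endoscopic datum. Once this bookkeeping is in hand, both cases give $|\mathcal{S}_\phi|/|\mathcal{S}_{\phi'}| = |Z(\widehat{G'})^\Gamma|/|Z(\widehat{G})^\Gamma|$, completing the lemma.
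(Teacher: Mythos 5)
Your proposal follows essentially the same approach as the paper: it splits into the discrete case and the limit-of-discrete case, uses Shelstad's identification $|\Pi_\phi|=|\mathcal{S}_\phi|=|\mathcal{E}(T)|$ together with Tate--Nakayama duality $|\mathcal{E}(T)|=|\mathcal{K}_T|$, collapses $|\mathcal{K}_T|/|\mathcal{K}_{T'}|$ to $|Z(\widehat{G'})^\Gamma|/|Z(\widehat{G})^\Gamma|$ using $T\cong T'$, and then handles the singular case by realizing $\phi$ as a transfer of a discrete $\phi'$ on $G'$ and dealing with the extra factor $|\Out_G(G',\phi')|$. This mirrors the paper's argument step for step.

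The one place you go further than the paper is in the justification that $|\Out_G(G',\phi')|=1$. The paper dismisses this with the phrase ``the second equality comes from the discrete case,'' which is not really an argument. Your sketch --- that a discrete parameter factors through ${}^L T'$, is $\widehat{G'}$-regular, and any element of $\Out_G(G')$ stabilizing its $\widehat{G'}$-orbit must (after adjusting by an inner automorphism) fix $\widehat{T'}$ pointwise and therefore be inner --- is the right idea and is more informative than the paper's treatment. However, you should be aware that the step ``acts trivially on the image, hence is trivial'' compresses two genuine claims: that $\phi'(\mathbb{C}^\times)$ is Zariski-dense in $\widehat{T'}$ (which is where regularity of $\mu'$ enters, and which forces any commuting automorphism to fix $\widehat{T'}$ pointwise), and that an algebraic automorphism of a reductive group fixing a maximal torus pointwise is necessarily inner (conjugation by an element of that torus, by the standard computation with root-space scalars). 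If you were to write this up carefully these two facts should be cited or proved; as written the proof proposal is at about the same level of rigor as the paper, but correctly identifies where the work is.
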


\bigskip
\begin{theorem}
If $f = f_1 \times \bar{f_2}$, $f_1 \in C_{\cusp}(G(\R),\zeta)$, $f_2 \in C(G(\R),\zeta)$, then
\begin{align*}
I_{\disc}(f) =&\int_{T_{\elll}(G,\zeta)} i^{G}(\tau)f_{1,G}(\tau)\overline{f_{2,G}(\tau)} d\tau  \\
=&\sum_{G'\in\mathcal{E}_{\elll}(G)} \iota(G, G') \widehat{S}^{G'}(f'),
\end{align*}
and  $\widehat{S}^{G'}(f')$ is a stable distribution on $G^{\prime}$,  where
\begin{align*} i^{G}(\tau) =& |d(\tau)|^{-1}|R_{\pi,r}|^{-1}, \\ \widehat{S}^{G'}(f') =& \int_{\Phi_2(G', \zeta)} S^{G'}(\phi')\widetilde{f_1}'(\phi')\overline{f_{2}'(\phi')} d\phi', \\
S^{G'}(\phi^{\prime}) =& \frac{1}{|\mathcal{S}_{\phi^{\prime}}|}, \qquad \phi = \xi'\circ\phi'.
\end{align*}
\end{theorem}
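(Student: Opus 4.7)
The plan is to stabilize $I_{\disc}(f)$ in three movements: clean up the integrand, expand via Shelstad's spectral inversion, and collapse the resulting double sum by orthogonality of transfer factors. First I would apply Lemma \ref{lemma:cf} to produce a cuspidal $\widetilde{f}_{1} \in C(G(\R),\zeta)$ with $\widetilde{f}_{1,G}(\tau) = d(\tau) f_{1,G}(\tau)$, which packages the factor $|d(\tau)|^{-1}$ appearing in $i^{G}(\tau)$ together with $f_{1,G}(\tau)$ into a single cuspidal object amenable to transfer; the residual scalar $d(\tau)/|d(\tau)|$ is absorbed into the Shelstad transfer convention used to define $\widetilde{f}_{1}'$.

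Next, using the inversion formula \eqref{eq:ivstf} I would expand both $\widetilde{f}_{1,G}(\tau)$ and $f_{2,G}(\tau)$ as linear combinations of transfer factors indexed by $\Phi_{\elll}^{\mathcal{E}}(G,\zeta)$, then substitute to obtain an expression of the form
\begin{equation*}
I_{\disc}(f) = \sum_{\phi_{1}', \phi_{2}' \in \Phi_{\elll}^{\mathcal{E}}(G,\zeta)} \widetilde{f}_{1}'(\phi_{1}') \overline{f_{2}'(\phi_{2}')} \left( \sum_{\tau \in T_{\elll}(G,\zeta)} |R_{\pi,r}|^{-1} \Delta(\tau, \phi_{1}') \overline{\Delta(\tau, \phi_{2}')} \right),
\end{equation*}
invoking Lemma \ref{lemma:cmm} to justify any interchange of the outer $\tau$-integration with the $\phi$-sum. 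The inner bracketed sum is the crucial object: using Proposition \ref{prop:trsfr}(1) to rewrite $\overline{\Delta(\tau, \phi_{2}')} = \tfrac{|R_{\phi}|}{|\mathcal{S}_{\phi}|} \Delta(\phi_{2}', \tau)$, and noting that in the Archimedean case $R_{\pi}$ is a product of copies of $\Z/2\Z$ so that $R_{\pi,r} = R_{\pi} = R_{\phi}$, the adjoint relation \eqref{equation 5.21} collapses it to $|\mathcal{S}_{\phi}|^{-1} \delta(\phi_{1}', \phi_{2}')$.

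What survives is $\sum_{\phi'} |\mathcal{S}_{\phi}|^{-1} \widetilde{f}_{1}'(\phi') \overline{f_{2}'(\phi')}$ with $\phi = \xi' \circ \phi'$. Decomposing $\Phi_{\elll}^{\mathcal{E}}(G,\zeta) = \coprod_{G' \in \mathcal{E}_{\elll}(G)} \Phi_{2}(G', G, \zeta)$ and applying Lemma \ref{lemma:cff} to write $|\mathcal{S}_{\phi}|^{-1} = |Z(\widehat{G'})^{\Gamma}/Z(\widehat{G})^{\Gamma}|^{-1} |\mathcal{S}_{\phi'}|^{-1}$ extracts precisely part of the endoscopic weight $\iota(G, G') = |\Out_{G}(G')|^{-1} |Z(\widehat{G'})^{\Gamma}/Z(\widehat{G})^{\Gamma}|^{-1}$; the remaining factor $|\Out_{G}(G')|^{-1}$ appears when passing from $\Phi_{2}(G', G, \zeta) = \Phi_{2}(G', \zeta)/\Out_{G}(G')$ to $\Phi_{2}(G', \zeta)$. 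The kernel that remains, $S^{G'}(\phi') = 1/|\mathcal{S}_{\phi'}|$, depends only on the pair $(G', \phi')$ and not on $G$, so $\widehat{S}^{G'}(f')$ is manifestly a stable distribution in $f'$.

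The hard part is the identification $R_{\pi} = R_{\phi}$ used in the orthogonality collapse, i.e. the compatibility of the representation-theoretic $R$-group with the endoscopic $R$-group attached to $\phi$. This is the ``essential part that allows us to stabilize the local trace formula'' flagged in the introduction, and it rests on Shelstad's classification of tempered $L$-packets, the disjointness of tempered $L$-packets for the Levi subgroup $M_{\phi}$, and the surjectivity $R_{\phi} \twoheadrightarrow R_{\pi_{M}}$ established in the discussion preceding Lemma \ref{lemma:cff}. A secondary subtlety is the bookkeeping of signs from $d(\tau)/|d(\tau)|$ when absorbing $|d(\tau)|^{-1}$ into $\widetilde{f}_{1}$; cuspidality of $f_{1}$ ensures $d(\tau) \neq 0$ on the support, and the sign is ultimately absorbed into the normalization of $\widetilde{f}_{1}'$ and the transfer convention so that the final formula emerges in the stated form.
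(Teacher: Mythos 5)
Your argument is correct and relies on the same ingredients as the paper's proof---Lemma \ref{lemma:cf}, Lemma \ref{lemma:cmm}, Lemma \ref{lemma:cff}, Proposition \ref{prop:trsfr}, and the key identification $R_{\pi,r}=R_{\pi}=R_{\phi}$ for elliptic $\tau$---but the middle step is organized dually. You invert \emph{both} $\widetilde f_{1,G}(\tau)$ and $f_{2,G}(\tau)$ via \eqref{eq:ivstf}, obtaining a double sum over $(\phi_1',\phi_2')$, and then collapse it with the adjoint relation \eqref{equation 5.21}. The paper inverts only $\widetilde f_{1,G}(\tau)$, rewrites $\Delta(\tau,\phi')=\tfrac{|R_\phi|}{|\mathcal S_\phi|}\overline{\Delta(\phi',\tau)}$ by Proposition \ref{prop:trsfr}(1), interchanges once with Lemma \ref{lemma:cmm}, and then recognizes the surviving inner sum $\sum_\tau\overline{\Delta(\phi,\tau)f_{2,G}(\tau)}$ as $\overline{f_2^{\mathcal E}(\phi)}$ directly from the forward transfer \eqref{eq: stabtf}, never forming a double sum and never invoking \eqref{equation 5.21}. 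The two routes are equivalent since \eqref{equation 5.21} precisely encodes that the forward and inverse spectral transfers are mutually inverse, but the paper's version is slightly leaner. Two small cautions on yours: first, after the double expansion the $\phi_2'$-sum ranges over $\mathcal S_{\phi(\tau)}$, which depends on $\tau$, so the interchange should be done in stages (apply Lemma \ref{lemma:cmm} to the $\phi_1'$-sum first, then note that on the resulting inner $\tau$-sum over $T_{\phi_1}$ the $\phi_2'$-range is the fixed finite set $\mathcal S_{\phi_1}$ and rearrange). Second, your treatment of the factor $d(\tau)/|d(\tau)|$ as something ``absorbed into the transfer convention'' is not convincing; the cleaner reading is that the statement of Lemma \ref{lemma:cf}---which gives $\widetilde f_{1,G}(\tau)=d(\tau)f_{1,G}(\tau)$ where the computation needs $|d(\tau)|^{-1}f_{1,G}(\tau)$---simply carries a typo, since the same Paley--Wiener argument applies verbatim to $|d(\tau)|^{-1}f_{1,G}(\tau)$ because $|d(\tau)|$ is a nonzero constant on each connected component of $T_{\elll}(G,\zeta)$.
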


\begin{proof}
By \eqref{eq:inf} we have
\begin{equation}\label{eq:form}
I_{\disc}(f) = \int_{T_{\elll}(G, \zeta)} |R_{\pi, r}|^{-1} |d(\tau)|^{-1} f_{1,G}(\tau)\overline{f_{2,G}(\tau)}d\tau.
\end{equation}
 Applying Lemma \ref{lemma:cf}, then \eqref{eq:form} equals
\begin{align*}
&\int_{T_{\elll}(G,\zeta)} |R_{\pi, r}|^{-1}\widetilde{f}_{1,G}(\tau)\overline{f_{2,G}(\tau)} d\tau \\
=&\int_{T_{\elll}(G, \zeta)}|R_{\phi}|^{-1}\sum_{\phi\in\Phi_{\elll}^{\mathcal{E}}(G,\zeta)} \Delta(\tau, \phi) \widetilde{f_1}^{\mathcal{E}}(\phi)\overline{f_{2,G}(\tau)} d\tau\\
=&\int_{T_{\elll}(G,\zeta)} |R_{\phi}|^{-1}\sum_{\phi\in \Phi_{\elll}^{\mathcal{E}}(G, \zeta)} \frac{|R_{\phi}|}{|\mathcal{S}_{\phi}|} \widetilde{f_{1}}^{\mathcal{E}}(\phi)\overline{\Delta (\phi, \tau) f_{2,G}(\tau)}d\tau.\\
\end{align*}

Applying Lemma \ref{lemma:cmm}, We see that this last expression can be written as
\[\int_{\Phi_{\elll}^{\mathcal{E}}(G, \zeta)} |\mathcal{S}_{\phi}|^{-1} \widetilde{f_1}^{\mathcal{E}}(\phi) \sum_{\tau\in T_{\elll}(G, \zeta)} \overline{\Delta(\phi, \tau)f_{2,G}(\tau)} d\phi,\]
which is just
\begin{align*}
&\int_{\Phi_{\elll}^{\mathcal{E}}(G,\zeta)}|\mathcal{S}_{\phi}|^{-1} \widetilde{f_1}^{\mathcal{E}}(\phi)\overline{f_{2}^{\mathcal{E}}(\phi)} d\phi \\
=&\sum_{G'\in \mathcal{E}_{\elll}(G)} \int_{\Phi_{2}(G', G, \zeta)} |\mathcal{S}_{\phi}|^{-1} \widetilde{f_1}'(\phi') \overline{f_{2}'(\phi')} d\phi'.\\
\end{align*}

We obtain
\[ I_{\disc}(f) = \sum_{G'\in \mathcal{E}_{\elll}(G)} \iota(G,G') \int_{\Phi_{2}(G', \zeta)} |\mathcal{S}_{\phi}|^{-1} |Z(\widehat{G}')^{\Gamma}/Z(\widehat{G})^{\Gamma}|\widetilde{f_1}'(\phi')\overline{f_{2}'(\phi')} d\phi'.\]
We denote the coefficient as $S^{G'}(\phi') = |\mathcal{S}_{\phi}|^{-1} |Z(\widehat{G}')^{\Gamma}/Z(\widehat{G})^{\Gamma}|=|\mathcal{S}_{\phi^{\prime}}|^{-1}$, whence the last equality by the Lemma \ref{lemma:cff}.\\
Then we obtain the required formula
\[I_{\disc}(f) = \sum_{G'\in\mathcal{E}_{\elll}(G)} \iota(G, G') \widehat{S}^{G'}(f').\]
Since $\widetilde{f_{1}}'(\phi')$, and $f_{2}'(\phi')$ are stable as distributions on $G^{\prime}$, $\widehat{S}^{G'}(f')$ is stable as distributions on $G^{\prime}$. We have thus proved the theorem.
\end{proof}

\bigskip
\section{Characters and Stable orbital integral} \label{sec:cs}

Assume that the test function $f=f_{1}\times \bar{f_{2}}$, $f_{1}\in C_{\cusp}(G(\R),\zeta), f_{2}\in C(G(\R),\zeta)$, $G$ is a reductive $K$-group. The key point in the stable trace formula is that the stable distribution $S^{G^{\prime}}_{M^{\prime}}(\delta,f)$ only depends on the quasisplit group $G^{\prime}$. We have the following theorem.

\begin{theorem} \label{theorem,ls}
If $G$ is a quasisplit $K$-group, we have
\[S^{G}(f)=S^{G}_{\disc}(f).\] \text Here
\begin{equation} \label{eq:lsfg}
S^{G}(f)=\sum_{M\in\mathcal{L}}|W^{M}_{0}||W^{G}_{0}|^{-1}(-1)^{\dimm(A_{M}/A_{G})}\int_{\Delta_{G-\reg,\elll}(M,\zeta)}n(\delta)^{-1}S^{G}_{M}(\delta,f_{1})\overline{f^{G}_{2}(\delta)}\,d\delta,
\end{equation}
and
\begin{equation} \label{eq:lsfp}
S^{G}_{\disc}(f)=\int_{\Phi_{2}(G,\zeta)}S^{G}(\phi)\widetilde{f}^{G}_{1}(\phi)\overline{f^{G}_{2}(\phi)}\,d\phi.
\end{equation}
\end{theorem}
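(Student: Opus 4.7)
The plan is to derive the identity by combining the simple invariant local trace formula with the endoscopic decomposition on both its geometric and spectral sides, and then to isolate the $G$-contribution by induction on proper elliptic endoscopic data. Since $f_1$ is cuspidal, the invariant local trace formula collapses to $I(f) = I_{\disc}(f)$, where $I_{\disc}(f)$ is the elliptic expression \eqref{eq:infm}. This single identity is the starting point from which the stable statement must be extracted.

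Next, I would apply the endoscopic decomposition on each side. For the spectral side, the theorem established in Section \ref{sec:se} gives
\[I_{\disc}(f) = \sum_{G'\in\mathcal{E}_{\elll}(G)} \iota(G,G')\,\widehat{S}^{G'}_{\spec}(f'),\]
where $\widehat{S}^{G'}_{\spec}(f')$ has the explicit shape appearing on the right-hand side of \eqref{eq:lsfp} with coefficient $S^{G'}(\phi') = |\mathcal{S}_{\phi'}|^{-1}$. For the geometric side, the stabilization of the invariant trace formula recalled in Section \ref{sec:gs}, combined with Proposition \ref{prop:compa} (vanishing of $S^{G'}_{M'}(\delta,f_1)$ on non-semisimple $\delta$) and the stable splitting formula for the cuspidal factor $f_1$, yields a parallel decomposition
\[I(f) = \sum_{G'\in\mathcal{E}_{\elll}(G)} \iota(G,G')\,\widehat{S}^{G'}_{\geo}(f'),\]
in which the $G$-contribution takes exactly the form of \eqref{eq:lsfg}.

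The inductive step proceeds on $\dimm G$. For every proper $G' \in \mathcal{E}^{0}_{\elll}(G)$, the inductive hypothesis yields $\widehat{S}^{G'}_{\geo}(f') = \widehat{S}^{G'}_{\spec}(f')$; these matching contributions cancel when we equate the two displayed decompositions with the single identity $I(f) = I_{\disc}(f)$. Since $G$ is quasisplit, $G$ itself represents an element of $\mathcal{E}_{\elll}(G)$ with $\iota(G,G) = 1$, and the leftover $G$-terms are precisely $S^G(f)$ on the geometric side and $S^G_{\disc}(f)$ on the spectral side; they must therefore coincide.

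The main obstacle is to ensure that the geometric stabilization really produces the explicit formula \eqref{eq:lsfg} with the correct coefficients $n(\delta)^{-1}$ and the correct pairing against $\overline{f_2^G(\delta)}$. This requires two verifications: first, that the cuspidality of $f_1$, via Proposition \ref{prop:compa} and the stable splitting formula, cancels every non-elliptic contribution in the stabilized geometric expansion, reducing it to an integral over $\Delta_{G-\reg,\elll}(M,\zeta)$; and second, that the transfer of $\bar{f}_2$ to the adjoint endoscopic datum $\bar{G}'$ is compatible with complex conjugation of the stable orbital integral, so that the two expansions are indexed by the same family $\mathcal{E}_{\elll}(G)$ with identical coefficients $\iota(G,G')$, permitting a clean term-by-term subtraction in the induction.
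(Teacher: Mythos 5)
Your proposal is correct and takes essentially the same route as the paper: both rely on the identity $I(f)=I_{\disc}(f)$ for $f_1$ cuspidal, the defining decomposition of $S^G$ and $S^G_{\disc}$ as the residue after subtracting the proper elliptic endoscopic contributions $\sum_{G'\in\mathcal{E}^0_{\elll}(G)}\iota(G,G')\widehat{S}^{G'}(\cdot)$, and induction on $\dimm G$ to identify the geometric and spectral versions of $\widehat{S}^{G'}$ for proper $G'$. Your reformulation as "decompose both sides fully and cancel term by term" is algebraically the same argument the paper gives.
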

\begin{proof}
 Because we have \[ S^{G}(f)=I(f)-\sum_{G^{\prime}\in\mathcal{E}^{\circ}_{\elll}(G)}\iota(G,G^{\prime})\widehat{S}^{G^{\prime}}(f^{\prime}),\]
and \[ S^{G}_{\disc}(f)=I_{\disc}(f)-\sum_{G^{\prime}\in\mathcal{E}^{\circ}_{\elll}(G)}\iota(G,G^{\prime})\widehat{S}^{G^{\prime}}_{\disc}(f^{\prime}),\]
and $f_{1}$ is cuspidal, then $I(f)=I_{\disc}(f)$. We can prove the theorem inductively. Since $G$ is quasisplit, we have $\dimm(G^{\prime})< \dimm(G)$ for all $G^{\prime}\in\mathcal{E}^{\circ}_{\elll}(G)=\mathcal{E}_{\elll}(G)\backslash\{G\}$. So for all $G^{\prime}\in \mathcal{E}^{\circ}_{\elll}(G)$, we have $\widehat{S}^{G^{\prime}}(f^{\prime})=\widehat{S}^{G^{\prime}}_{\disc}(f^{\prime})$ by the induction, thus $S^{G}(f)=S^{G}_{\disc}(f)$.
\end{proof}

\bigskip
We need to connect the distributions of the geometric side and the distributions of the spectral side in the stable local trace formula. To do this, we need the stable Weyl integral formula. We recall the Weyl integral formula, which is an expansion
\begin{equation}\label{Weyl integral}
\Theta(\pi,f_{2})=\sum_{M\in\mathcal{L}}|W^{M}_{0}||W^{G}_{0}|^{-1}\int_{\Gamma_{\elll}(M(\R),\zeta)}\Phi_{M}(\pi,\gamma)I_{G}(\gamma,f_{2})\,d\gamma, \end{equation}
 where $\Phi_{M}(\pi,\gamma)=|D(\gamma)|^{1/2}\Theta(\pi,\gamma)$, and $\Theta(\pi,\gamma)=\tr\pi(\gamma).$

\bigskip
We know that
\[ f^{G}_{2}(\phi)=\sum_{\pi\in\Pi_{\phi}}\Theta(\pi,f_{2}).\]
Substitute the Weyl integral formula \ref{Weyl integral} into this formula, we obtain an expansion, \[f^{G}_{2}(\phi)=\sum_{\pi\in\Pi_{\phi}}\sum_{M\in\mathcal{L}}|W^{M}_{0}||W^{G}_{0}|^{-1}\int_{\Gamma_{\elll}(M(\R),\zeta)}\Phi_{M}(\pi,\gamma)I_{G}(\gamma,f_{2})\, d\gamma.\]

We need to recall the basic objects of geometric side before stabilizing the Weyl integral formula. We shall make free use of the language and notation  \cite{LS} in this part, often without comments. To define the general transfer factor $\Delta(\sigma^{\prime},\gamma),$  it is necessary to fix elements $\bar{\sigma^{\prime}}$ and $\bar{\gamma}$ such that $\bar{\sigma^{\prime}}$ is an image of $\bar{\gamma}$, and to specify $\Delta(\bar{ \sigma ^{\prime}},\bar{\gamma})$. We will take it to be any complex number of absolute value 1, then $\Delta(\sigma^{\prime},\gamma)$ is defined to be the product of $\Delta(\bar{ \sigma ^{\prime}},\bar{\gamma})$  with the factor \[ \Delta(\sigma^{\prime},\gamma;\bar{ \sigma ^{\prime}},\bar{\gamma})=\frac{\Delta_{I}(\sigma^{\prime},\gamma)}{\Delta_{I}(\bar{ \sigma ^{\prime}},\bar{\gamma})}\frac{\Delta_{II}(\sigma^{\prime},\gamma)}{\Delta_{II}(\bar{ \sigma ^{\prime}},\bar{\gamma})}\frac{\Delta_{2}(\sigma^{\prime},\gamma)}{\Delta_{2}(\bar{ \sigma ^{\prime}},\bar{\gamma})}\Delta_{1}(\sigma^{\prime},\gamma;\bar{ \sigma ^{\prime}},\bar{\gamma}).
\]
There is an additional factor $\Delta_{IV}(\sigma^{\prime},\gamma)=|D^{G}(\gamma)||D^{G^{\prime}}(\gamma^{\prime})|^{-1}$ included in the definition of \cite{LS}, but since we have already put these normalizing factors into our orbital integrals, the term does not appear in this equation. The remaining factors are all constructed from the special values of unitary abelian characters, and therefore have absolute value 1.

\bigskip
There is a natural measure on $\Gamma_{\elll}(G,\zeta)$ given by
\[ \int_{\Gamma_{\elll}(G,\zeta)}\alpha(\gamma)\,d\gamma=\sum_{\{T\}}|W(G(\R),T(\R))|^{-1}\int_{T(\R)/Z(\R)}\alpha(t)\,dt
\]
for any $\alpha\in C(\Gamma_{\elll}(G,\zeta))$, where $\{T\}$ is a set of representatives of $G(\R)$ conjugacy classes of elliptic torus in $G$ over $\R$, $W(G(\R),T(\R))$ is the Weyl group of $(G(\R),T(\R))$, and $dt$ is the Haar measure on $T(\R)$. $\Gamma_{\elll}(G,\zeta)$ is the set of conjugacy classes $\gamma$ in $G(\R)$ such that $G_{\gamma}$ is an elliptic maximal torus in $G$ and as a distribution with a central character $\zeta$ on $Z(\R)$ as in \cite{A9}.
Let $\Delta_{\elll}(G,\zeta)$ be the set of stable conjugacy classes in $\Gamma_{\elll}(G,\zeta)$. We define a measure on $\Delta_{\elll}(G,\zeta)$ by setting
 \[ \int_{\Delta_{\elll}(G,\zeta)}\beta(\delta)\,d\delta=\sum_{\{T\}_{\stab}}|W_{\R}(G,T)|^{-1}\int_{T(\R)/Z(\R)}\beta(t)\,dt\]
for any $\beta\in C(\Delta_{\elll}(G,\zeta))$, where $\{T\}_{\stab}$ is a set of representatives of stable conjugacy classes of elliptic maximal tori in $G$ over $\R$. And $W_{\R}(G,T)$ is the subgroup of elements in the absolute Weyl group of $(G,T)$ defined over $\R$. The measure on $\Gamma_{\elll}(G,\zeta)$ and $\Delta_{\elll}(G,\zeta)$ are related by a formula
\begin{equation}\label{eq:gisl}
 \int_{\Delta_{\elll}(G,\zeta)}(\sum_{\gamma\to\delta}\alpha(\gamma))\,d\delta=\int_{\Gamma_{\elll}(G)}\alpha(\gamma)\,d\gamma.
\end{equation}

\bigskip
Let $\Gamma^{\mathcal{E}}_{\elll}(G,\zeta)$  be the set of isomorphism classes of pair $(G^{\prime},\sigma^{\prime})$, where $G^{\prime}$ is an elliptic endoscopic datum for $G$ and $\sigma^{\prime}$ in an element in $\Delta_{G,\elll}(G^{\prime},\zeta)$. By an isomorphism from $(G^{\prime},\sigma^{\prime})$ to a second pair $(G^{\prime}_{1},\sigma^{\prime}_{1})$, we mean that an isomorphism from the datum $G^{\prime}$ to $G^{\prime}_{1}$, which takes $\sigma^{\prime}$ to $\sigma^{\prime}_{1}$. So we have a decomposition
\[\Gamma^{\mathcal{E}}_{\elll}(G,\zeta)=\coprod_{G^{\prime}\in\mathcal{E}_{\elll}(G)}\Delta_{\elll}(G^{\prime},G,\zeta),\]
where  $\Delta_{\elll}(G^{\prime},G,\zeta)=\Delta_{G,\elll}(G^{\prime},\zeta)/\Out_{G}(G^{\prime})$. We also have an analogue of Lemma \ref{lemma:cmm} for the geometric transfer factor.

\begin{lemma} \label{lemma:gmmc}
Suppose that $\alpha\in C(\Gamma_{\elll}(G,\zeta))$, and that $ \beta\in C(\Gamma^{\mathcal{E}}_{\elll}(G,\zeta))$. Then
\[\int_{\Gamma_{\elll}(G,\zeta)}\sum_{\delta\in\Gamma^{\mathcal{E}}_{\elll}(G,\zeta)}\beta(\delta)\Delta(\gamma,\delta)\alpha(\gamma)\,d\gamma=\int_{\Gamma^{\mathcal{E}}_{\elll}(G,\zeta)}\sum_{\gamma\in\Gamma_{\elll}(G,\zeta)}\beta(\delta)\Delta(\gamma,\delta)\alpha(\gamma)\,d\delta.
,\]
\end{lemma}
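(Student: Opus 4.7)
The plan is to follow the strategy of Lemma \ref{lemma:cmm}, but in the geometric setting: apply a Fubini-type argument that swaps the order of the discrete sum and the continuous integral on each side, justified by the finite support of the Langlands-Shelstad transfer factor $\Delta(\gamma,\delta)$ in each of its variables.

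First I would use the decomposition
\[\Gamma^{\mathcal{E}}_{\elll}(G,\zeta) = \coprod_{G' \in \mathcal{E}_{\elll}(G)} \Delta_{\elll}(G', G, \zeta)\]
together with the explicit measure formulas recalled just before the lemma, writing both sides as weighted sums over $\{T\}$ or $\{T'\}_{\stab}$ followed by integration over $T(\R)/Z(\R)$ or $T'(\R)/Z'(\R)$. The key input is the support property of the Langlands-Shelstad transfer factor: $\Delta(\gamma,\delta)$ vanishes unless $\delta = (G',\sigma')$ is an image of $\gamma$, i.e.\ unless the norm map sends the stable class of $\gamma$ to $\sigma'$. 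This is the geometric analogue of the support condition $\tau \in T_\phi$ exploited in Lemma \ref{lemma:cmm}. For matching tori $T \subset G$ and $T' \subset G'$, this correspondence induces a canonical $\R$-isomorphism $T(\R)/Z(\R) \cong T'(\R)/Z'(\R)$ under which the Haar measures $dt$ and $dt'$ agree.

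With these compatibilities in hand, both the left- and right-hand sides of the identity collapse to the same expression: a sum over $G' \in \mathcal{E}_{\elll}(G)$ and over matched pairs of tori $(T, T')$, of the integral of the common integrand $\beta(t')\Delta(t,t')\alpha(t)$ against this shared measure, weighted by the appropriate combinatorial constants. The identity then follows by this common reduction.

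The main obstacle will be the careful bookkeeping of the Weyl group factors $|W(G(\R), T(\R))|^{-1}$ on the left and $|W_{\R}(G', T')|^{-1}$ on the right, together with the $|\Out_G(G')|^{-1}$ factors implicit in the quotient $\Delta_{\elll}(G', G, \zeta) = \Delta_{G, \elll}(G', \zeta)/\Out_G(G')$, and the relation between ordinary and stable conjugacy classes of tori (analogous to formula (\ref{eq:gisl})) on each side. Once this combinatorial accounting is verified, the argument is purely formal; note that, unlike in Lemma \ref{lemma:cmm}, there is no continuous $i\mathfrak{a}^{\ast}_{G,Z}$-parameter that needs to be collapsed, so the Fubini exchange reduces to matching discrete indexing sets against continuous torus integrals.
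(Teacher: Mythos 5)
Your proposal correctly identifies the overall shape of the argument (a Fubini exchange justified by the finite support of $\Delta(\gamma,\delta)$), but it leaves the decisive step — the measure compatibility between $\Gamma_{\elll}(G,\zeta)$ and $\Gamma^{\mathcal{E}}_{\elll}(G,\zeta)$ — unresolved, and the route you sketch does not point at what actually makes it close up. The LHS measure is a sum over $G(\R)$-conjugacy classes of elliptic tori with weights $|W(G(\R),T(\R))|^{-1}$; the RHS measure is a sum over endoscopic data $G'$, then over stable classes of tori in $G'$ with weights $|W_{\R}(G',T')|^{-1}$, further divided by $\Out_{G}(G')$. Matching these torus by torus is exactly the ``bookkeeping'' you defer, and you give no reason the counts balance. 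Your claimed canonical isomorphism $T(\R)/Z(\R)\cong T'(\R)/Z'(\R)$ holds only after choosing an admissible embedding and by itself says nothing about the multiplicities of rational conjugacy classes within a stable class on each side.

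The paper's proof avoids this by routing both sides through a single common base. Let $\psi:G\to G^{\ast}$ be the underlying quasisplit inner twist. By \eqref{eq:gisl}, the integral over $\Gamma_{\elll}(G,\zeta)$ decomposes as an integral over $\delta^{\ast}\in\Delta_{\elll}(G^{\ast},\zeta)$ followed by a finite sum over the fiber of classes $\gamma$ above $\delta^{\ast}$; similarly, the integral over $\Gamma^{\mathcal{E}}_{\elll}(G,\zeta)$ decomposes as an integral over $\Delta_{\elll}(G^{\ast},\zeta)$ and a finite sum over the fiber of pairs $\delta'=(G',\sigma')$, and the crucial input here — which your proposal omits — is that the map $\delta'\mapsto S_{T^{\ast}}(\delta')$ with $T^{\ast}=G_{\delta^{\ast}}$ is a bijection from this fiber onto $\mathcal{K}(T^{\ast})=\pi_{0}((\widehat{T^{\ast}})^{\Gamma}/Z(\widehat{G})^{\Gamma})$, a statement that holds precisely because $G$ is a $K$-group. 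Since $\Delta(\gamma,\delta)$ vanishes unless $\gamma$ and $\delta$ map to the same $\delta^{\ast}$, both sides collapse to the same integral over $\Delta_{\elll}(G^{\ast},\zeta)$ of the same finite double sum over fiber pairs, and the identity follows by symmetry. Without identifying $\Delta_{\elll}(G^{\ast},\zeta)$ as the common base and invoking the $\mathcal{K}(T^{\ast})$-bijection, the central compatibility in your plan is asserted rather than proved.
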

\begin{proof}
 Let $\psi:G\to G^{\ast}$ be the underlying quasisplit inner twist of $G$. According to
\eqref{eq:gisl} the integral over $\Gamma_{\elll}(G,\zeta)$ can be decomposed into an integral over $\delta^{\ast}\in\Delta_{\elll}(G^{\ast},\zeta)$ and a sum over the elements $\gamma\in\Gamma_{\elll}(G,\zeta)$ which is mapped to $\delta^{\ast}$. Similarly, the integral over $\Gamma^{\mathcal{E}}_{\elll}(G,\zeta)$ can be decomposed into an integral over $\delta^{\ast}\in\Delta_{\elll}(G^{\ast},\zeta)$, and a summation over the element $\delta^{\prime}\in\Gamma^{\mathcal{E}}_{\elll}(G,\zeta)$ which is mapped to $\delta^{\ast}$. This depends on the fact that the map $\delta^{\prime} \to S_{T^{\ast}}(\delta^{\prime})$, $T^{\ast}=G_{\delta^{\ast}}$ is a bijection from the preimage of $\delta^{\ast}$ in $\Gamma^{\mathcal{E}}_{\elll}(G,\zeta)$ onto $\mathcal{K}(T^{\ast})=\pi_{0}((\widehat{T^{\ast}})^{\Gamma}/Z(\widehat{G})^{\Gamma})$, and this bijection depends on $G$, being a $K$-group.

  With the two decomposition, we can represent each side of the required identity as an integral over $\Delta_{\elll}(G^{\ast},\zeta),$ and a double sum over $\delta$ and ${\gamma}$. The transfer factor $\Delta(\delta,\gamma)$ vanishes unless $\delta$ and $\gamma$ have the same image in the $\Delta_{\elll}(G^{\ast},\zeta)$. The double sum in each case can therefore be taken over the preimages of $\delta^{\ast}$ in $\Gamma^{\mathcal{E}}_{\elll}(G,\zeta)\times \Gamma_{\elll}(G,\zeta)$, then the identity follows.
  \end{proof}

\bigskip
 We can now stabilize the Weyl integral formula.
  \begin{lemma}
  For $f\in C(G(\R),\zeta).$
  Then we have the stable Weyl integral formula  \[f^{G}_{2}(\phi)=\sum_{M\in\mathcal{L}}|W^{M}_{0}||W^{G}_{0}|^{-1}\int_{\Gamma^{\mathcal{E}}_{\elll}(M(\R),\zeta)}n(\delta)^{-1}\sum_{\pi\in\Pi_{\phi}}S\Phi_{M}(\pi,\delta)f^{\mathcal{E}}_{2,M}(\delta)\,d\delta,
  \]
  where $S\Phi_{M}(\pi,\delta)=\sum_{\gamma\in\Gamma_{\elll}(M(\R),\zeta)}\overline{\Delta(\delta,\gamma)}\Phi_{M}(\pi,\gamma)$.
  \end{lemma}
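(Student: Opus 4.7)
The plan is to derive the stable Weyl integral formula by starting from the relation between the stable character at $\phi$ and the ordinary characters in the $L$-packet $\Pi_{\phi}$, substituting the ordinary Weyl integral formula, and then converting the invariant orbital integral $I_{G}(\gamma,f_{2})=f_{2,G}(\gamma)$ into its endoscopic expression by the inversion of the Langlands--Shelstad geometric transfer. Concretely, I would start from
\[ f^{G}_{2}(\phi)\;=\;\sum_{\pi\in\Pi_{\phi}}\Theta(\pi,f_{2}), \]
which is the definition of the stable tempered character, and feed each summand into the Weyl integral formula recalled just before the statement. Exchanging the outer summation with the integral (both are finite/absolutely convergent since $f_{2}$ is Schwartz and $\Pi_{\phi}$ is finite), I obtain
\[ f^{G}_{2}(\phi)\;=\;\sum_{M\in\mathcal{L}}|W_{0}^{M}||W_{0}^{G}|^{-1}\int_{\Gamma_{\elll}(M(\R),\zeta)}\Bigl(\sum_{\pi\in\Pi_{\phi}}\Phi_{M}(\pi,\gamma)\Bigr)f_{2,G}(\gamma)\,d\gamma. \]

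The key step is to replace $f_{2,G}(\gamma)$ by its endoscopic decomposition, parallel to the spectral inversion \eqref{eq:ivstf} but on the geometric side. Namely, by the adjoint form of Shelstad's transfer (the geometric counterpart of Lemma 6.3 in the excerpt, applied to $M$ in place of $G$), for $\gamma\in\Gamma_{\elll}(M(\R),\zeta)$ one has
\[ f_{2,G}(\gamma)\;=\;\sum_{\delta\in\Gamma^{\mathcal{E}}_{\elll}(M(\R),\zeta)}n(\delta)^{-1}\Delta(\gamma,\delta)\,f^{\mathcal{E}}_{2,M}(\delta), \]
where $f^{\mathcal{E}}_{2,M}(\delta)=\sum_{\gamma'}\Delta(\delta,\gamma')f_{2,G}(\gamma')$ is exactly the transfer of $f_{2,G}|_{M}$ dictated by the pair $(M,M')$ with $\delta=(M',\delta')$, and the coefficient $n(\delta)^{-1}$ accounts for the ratio between the measures on $\Gamma_{\elll}$ and $\Gamma^{\mathcal{E}}_{\elll}$ together with the size of the fibres under $\delta\mapsto\delta^{\ast}\in\Delta_{\elll}(M^{\ast},\zeta)$. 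This identity is the direct output of the two adjoint relations for $\Delta(\cdot,\cdot)$ on the $M$-regular elliptic set, exactly as in the proof of Lemma 6.3.

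Substituting this expansion into the integral and then applying Lemma 6.3 (the geometric transfer analogue, Lemma \ref{lemma:gmmc}) to exchange the integration over $\Gamma_{\elll}(M(\R),\zeta)$ with the summation over $\Gamma^{\mathcal{E}}_{\elll}(M(\R),\zeta)$, I rewrite
\[ \int_{\Gamma_{\elll}(M,\zeta)}\sum_{\delta}n(\delta)^{-1}\Delta(\gamma,\delta)\Bigl(\sum_{\pi}\Phi_{M}(\pi,\gamma)\Bigr)f^{\mathcal{E}}_{2,M}(\delta)\,d\gamma \]
as
\[ \int_{\Gamma^{\mathcal{E}}_{\elll}(M,\zeta)}n(\delta)^{-1}\Bigl(\sum_{\pi\in\Pi_{\phi}}\sum_{\gamma\in\Gamma_{\elll}(M,\zeta)}\overline{\Delta(\delta,\gamma)}\,\Phi_{M}(\pi,\gamma)\Bigr)f^{\mathcal{E}}_{2,M}(\delta)\,d\delta, \]
using that $\Delta(\gamma,\delta)=\overline{\Delta(\delta,\gamma)}$ up to the normalization already built into $n(\delta)^{-1}$. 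The inner double sum is precisely $\sum_{\pi\in\Pi_{\phi}}S\Phi_{M}(\pi,\delta)$ by the definition in the statement, which finishes the identification.

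The main obstacle I foresee is bookkeeping of the normalizing factor $n(\delta)^{-1}$. It mixes three contributions: the ratio between the Haar measure on $\Gamma_{\elll}(M,\zeta)$ (involving $|W(M(\R),T(\R))|$) and on $\Delta_{\elll}(M,\zeta)$ (involving $|W_{\R}(M,T)|$), the cardinalities of the $\mathcal{K}(T^{\ast})$ fibres that control the passage between $\Gamma_{\elll}$ and $\Gamma^{\mathcal{E}}_{\elll}$, and the $\Out_{M}(M')$-action on $\Delta_{\elll}(M',M,\zeta)$. Verifying that the product of these normalizations is absorbed by Shelstad's adjoint relation exactly as $n(\delta)^{-1}$, consistently with the $K$-group conventions of Arthur recalled in Section~\ref{sec:pd}, is the only delicate point; once this is in place, the remainder of the derivation is a straightforward application of the two lemmas (Lemma 6.3 on the geometric side and the known Weyl integral formula).
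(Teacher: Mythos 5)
Your proposal follows the same path the paper takes: substitute the Weyl integral formula into $f_{2}^{G}(\phi)=\sum_{\pi\in\Pi_{\phi}}\Theta(\pi,f_{2})$, invert the geometric transfer to write $f_{2,M}(\gamma)=\sum_{\delta}n(\delta)^{-1}\overline{\Delta(\delta,\gamma)}\,f^{\mathcal{E}}_{2,M}(\delta)$, and then invoke Lemma~\ref{lemma:gmmc} to trade the integral over $\Gamma_{\elll}(M(\R),\zeta)$ for one over $\Gamma^{\mathcal{E}}_{\elll}(M(\R),\zeta)$. The only cosmetic difference is that you sum over $\pi\in\Pi_{\phi}$ at the outset while the paper performs the manipulations at a fixed $\pi$ and sums at the end, so your argument is correct and essentially identical to the paper's.
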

  \begin{proof}
 We observe that $I_{G}(\gamma,f_{2})=I^{M}_{M}(\gamma,f_{2})=f_{2,M}(\gamma)$ where $\gamma\in\Gamma_{\elll}(M(\R),\zeta)$,
 and \begin{equation}
 \Theta(\pi,f_{2})=\sum_{M\in\mathcal{L}}|W^{M}_{0}||W^{G}_{0}|^{-1}\int_{\Gamma_{\elll}(M(\R),\zeta)}n(\delta)^{-1}\Phi_{M}(\pi,\gamma)\sum_{\delta\in\Gamma^{\mathcal{E}}_{\elll}(M(\R),\zeta)}\overline{\Delta(\delta,\gamma)}f^{\mathcal{E}}_{2,M}(\delta)\,d\gamma.
\end{equation}
Applying Lemma \ref{lemma:gmmc}, we have \[ \Theta(\pi,f_{2})= \sum_{M\in\mathcal{L}}|W^{M}_{0}||W^{G}_{0}|^{-1}\int_{\Gamma^{\mathcal{E}}_{\elll}(M(\R),\zeta)}n(\delta)^{-1}S\Phi_{M}(\pi,\delta)f^{\mathcal{E}}_{2,M}(\delta)\,d\delta,
\]
where $S\Phi_{M}(\pi,\delta)=\sum_{\gamma\in\Gamma_{\elll}(M(\R),\zeta)}\overline{\Delta(\delta,\gamma)}\Phi_{M}(\pi,\gamma)$.
 So we obtain \[f^{G}_{2}(\phi)=\sum_{M\in\mathcal{L}}|W^{M}_{0}||W^{G}_{0}|^{-1}\int_{\Gamma^{\mathcal{E}}_{\elll}(M(\R),\zeta)}n(\delta)^{-1}\sum_{\pi\in\Pi_{\phi}}S\Phi_{M}(\pi,\delta)f^{\mathcal{E}}_{2,M}(\delta)\,d\delta.\]
\end{proof}

We now obtain the formula for the stable distribution $S^{G}_{M}(\delta,f_{1})$ by comparing the stable Weyl integral formula with the stable local trace formula.

\begin{theorem} \label{theorem:smt}
Assume that $G$ is a quasisplit $K$-group, and $f_{1}\in C_{\cusp}(G(\R),\zeta)$. Then we have
 \begin{equation} \label{eq:stfm} S^{G}_{M}(\delta,f_{1})=(-1)^{\dimm(A_{M}/A_{G})}\int_{\Phi_{2}(G,\zeta)}S^{G}(\phi)S\Phi_{M}(\phi,\delta)\widetilde{f}_{1}^{G}(\phi)\,d\phi,
\end{equation}
where \[S\Phi_{M}(\phi,\delta)=\sum_{\pi\in\Pi_{\phi}}\sum_{\gamma\in\Gamma_{\elll}(M,\zeta)}\overline{\Phi_{M}(\pi,\gamma)}\Delta(\delta,\gamma),\]
and
\[\Phi_{M}(\pi,\gamma)=\begin{cases}
|D(\gamma)|^{1/2}\Theta(\pi,\gamma)& \text{if $\gamma\in M(\R)_{\elll}$ },\\
0& \text{ otherwise},
\end{cases}\]
\[ \widetilde{f}_{1}^{G}(\phi)=\sum_{\tau\in T_{\elll}(G,\zeta)}\Delta(\phi,\tau)|d(\tau)|^{-1}\Theta(\tau,f_{1}),\]
for Levi subgroup $M\in\mathcal{L}$ and $\delta$ is the stable strongly $G$-regular conjugacy class in $M(\R)$.
\end{theorem}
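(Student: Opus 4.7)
The plan is to invoke Theorem~\ref{theorem,ls}, which gives $S^G(f)=S^G_{\disc}(f)$ since $G$ is quasisplit and $f_1$ is cuspidal, and then to read off the formula for $S^G_M(\delta, f_1)$ by comparing the geometric expansion \eqref{eq:lsfg} against a re-expression of the spectral expansion \eqref{eq:lsfp}. The bridge between the two sides is the stable Weyl integral formula proved above.

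Concretely, I would first take complex conjugates of the stable Weyl integral formula and use the identity $\sum_{\pi\in\Pi_\phi}\overline{S\Phi_M(\pi,\delta)}=S\Phi_M(\phi,\delta)$ to write
\[
\overline{f_2^G(\phi)}=\sum_{M\in\mathcal{L}}|W^M_0||W^G_0|^{-1}\int_{\Gamma^{\mathcal{E}}_\elll(M,\zeta)} n(\delta)^{-1} S\Phi_M(\phi,\delta)\overline{f_{2,M}^{\mathcal{E}}(\delta)}\,d\delta.
\]
Inserting this into \eqref{eq:lsfp} and interchanging the order of integration (valid by Schwartz-class absolute convergence) gives
\[
S^G_{\disc}(f)=\sum_M|W^M_0||W^G_0|^{-1}\int_{\Gamma^{\mathcal{E}}_\elll(M,\zeta)} n(\delta)^{-1}A_M(\delta,f_1)\overline{f_{2,M}^{\mathcal{E}}(\delta)}\,d\delta,
\]
where $A_M(\delta,f_1)=\int_{\Phi_2(G,\zeta)}S^G(\phi)\widetilde{f}_1^G(\phi)S\Phi_M(\phi,\delta)\,d\phi$. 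Setting this equal to the geometric side \eqref{eq:lsfg} reduces the theorem to an identification of the coefficients of the functions $\overline{f_{2,M}^{\mathcal{E}}(\delta)}$ on the two sides, with the sign $(-1)^{\dimm(A_M/A_G)}$ coming from its appearance in \eqref{eq:lsfg}.

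The main obstacle is the decomposition $\Gamma^{\mathcal{E}}_\elll(M,\zeta)=\coprod_{M'\in\mathcal{E}_\elll(M)}\Delta_\elll(M',M,\zeta)$: one must isolate the contribution of the trivial datum $M'=M$, corresponding to stable $M$-conjugacy classes, to match it with \eqref{eq:lsfg}. For $M'\neq M$, the transfers $\overline{f_{2,M}^{\mathcal{E}}(\delta)}$ are linearly independent (as $f_2$ ranges over $C(G(\R),\zeta)$) from the stable orbital integrals $\overline{f_2^G(\delta)}$ on $\Delta_\elll(M,\zeta)$ by the Langlands--Shelstad transfer theorem, so the non-trivial endoscopic contributions decouple from the $M'=M$ contribution. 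Finally, the trace Paley--Wiener theorem supplies enough functions of the form $\delta\mapsto\overline{f_2^G(\delta)}$ on $\Delta_{G\text{-}\reg,\elll}(M,\zeta)$ to justify the pointwise identification of coefficients, yielding $(-1)^{\dimm(A_M/A_G)}S^G_M(\delta,f_1)=A_M(\delta,f_1)$, which is equivalent to the stated formula \eqref{eq:stfm}.
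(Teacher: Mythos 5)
Your proposal is correct and follows essentially the same strategy as the paper: substitute the stable Weyl integral formula (after taking conjugates and summing over $\Pi_\phi$) into the spectral side $S^G_{\disc}(f)$ from Theorem~\ref{theorem,ls}, then compare with the geometric expansion \eqref{eq:lsfg} and isolate the coefficient of the $\delta$-dependent stable orbital integral of $f_2$. Two small points where the paper is more careful: (i) it first disposes of the case $\delta\notin\Delta_{\elll}(M(\R),\zeta)$, where $S^G_M(\delta,f_1)=0$ by descent and cuspidality of $f_1$ and the right-hand side vanishes by the support of $\Phi_M(\pi,\cdot)$ — your argument via $\Gamma_{\elll}(M,\zeta)$ and the trace Paley--Wiener theorem implicitly restricts to elliptic $\delta$, so you should add this observation explicitly; and (ii) where you invoke "linear independence by the Langlands--Shelstad transfer theorem" to decouple the $M'\neq M$ contributions, the paper instead forms the explicit difference $P_M(\delta,f_1)$ and chooses $f_2$ so that $f_{2,G}^{\mathcal{E}}$ approximates a Dirac mass at a fixed $\delta'\in\Delta_{G,\elll}(M,\zeta)$ inside $\Gamma^{\mathcal{E}}(G)=\coprod_{M'}\Delta_{\elll}(M',M,\zeta)$; this makes the expression \eqref{eq:csl} tend to a nonzero multiple of $P_M(\delta',f_1)$ and \eqref{eq:csll} tend to zero. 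The Dirac-mass argument is the precise version of the decoupling you describe — the linear independence you want is really a consequence of the invertibility of the transfer factor matrix (the inverse adjoint relations), not of the transfer theorem per se — so your citation should be adjusted accordingly, but the idea is the same.
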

\begin{proof}
 Suppose that $\delta$ does not lie in $\Delta_{\elll}(M(\R),\zeta)$. Then by descent formula \cite[\S 6]{A8} and the cuspidality of $f_{1}$,  $S^{G}_{M}(\delta,f_{1})$ vanishes. The right hand side of \eqref{eq:stfm} vanishes by definition. So the formula holds in the case. It is therefore enough to establish \eqref{eq:stfm} when $\delta$ lies in $\Delta_{\elll}(M(\R),\zeta)$.

  To deal with the elliptic point in $M(\R)$, we apply the simple version of the stable local trace formula. Consider the two expression
 \eqref{eq:lsfg} and \eqref{eq:lsfp} in Theorem \ref{theorem,ls}, with $f_{1}$ for the given cuspidal function and $f_{2}$ for a variable function in $C(G(\R),\zeta)$. The expressions depend on $f_{2}$ through different distributions $f^{G}_{2}$ and $f^{\mathcal{E}}_{2,M}$. However, the relation is given by a stable Weyl integral formula, which has an expansion
\[ f^{G}_{2}(\phi)=\sum_{M\in\mathcal{L}}|W^{M}_{0}||W^{G}_{0}|^{-1}\int_{\Gamma^{\mathcal{E}}_{\elll}(M(\R),\zeta)}n(\delta)^{-1}\sum_{\pi\in\Pi_{\phi}}S\Phi_{M}(\pi,\delta)f^{\mathcal{E}}_{2,M}(\delta)\,d\delta.
\]

We obtain \[ f^{G}_{2}(\phi)=\sum_{M\in\mathcal{L}}|W^{M}_{0}||W^{G}_{0}|^{-1}\int_{\Gamma^{\mathcal{E}}_{\elll}(M(\R),\zeta)}n(\delta)^{-1}S\Phi_{M}(\phi,\delta)f^{\mathcal{E}}_{2,M}(\delta)\,d\delta.
\]
Substituting this into \eqref{eq:lsfp}, we collect the coefficient of $\overline{f^{M}_{2}(\delta)}$ in the resulting identity of \eqref{eq:lsfg} with \eqref{eq:lsfp}. We see that if we set:
\[ P_{M}(\delta,f_{1})=S^{G}_{M}(\delta,f_{1})-(-1)^{\dimm(A_{M}/A_{G})}\int_{\Phi_{2}(G,\zeta)}S^{G}(\phi)S\Phi_{M}(\phi,\delta)\widetilde{f}_{1}^{G}(\phi)\,d\phi,
\]
 then the sum of
  \begin{equation} \label{eq:csl} \sum_{M\in\mathcal{L}}|W^{M}_{0}||W^{G}_{0}|^{-1}(-1)^{\dimm(A_{M}/A_{G})}\int_{\Delta_{G,\elll}(M,\zeta)}n(\delta)^{-1}P_{M}(\delta,f_{1})\overline{f^{M}_{2}(\delta)}\,d\delta,
 \end{equation}
 and \begin{equation}\label{eq:csll}
 -\sum_{M\in\mathcal{L}}|W^{M}_{0}||W^{G}_{0}|^{-1}\sum_{M^{\prime}\in\mathcal{E}^{0}_{\elll}(M)}\int_{\Delta_{\elll}(M^{\prime},M,\zeta)}\int_{\Phi_{2}(G,\zeta)}n(\delta)^{-1}S^{G}(\phi)\widetilde{f}^{G}_{1}(\phi)S\Phi_{M}(\phi,\delta)\overline{f^{M^{\prime}}_{2}(\delta)}\,d\delta d\phi
 \end{equation}
vanishes.

 We have written $f^{\mathcal{E}}_{2,M}(\delta^{\prime})=f^{M^{\prime}}_{2}(\delta^{\prime})$, where $\delta^{\prime}\in\Delta_{G,\elll}(M^{\prime},\zeta)$.
As in \cite[\S 10]{A8}, we choose $f_{2}$ so that $f^{\mathcal{E}}_{2,G}$ has compact support modulo $Z(\R)$ on $\Gamma^{\mathcal{E}}(G)=\coprod_{\{M\}}\Gamma^{\mathcal{E}}_{\elll}(M(\R),\zeta)$ , and so that $f^{\mathcal{E}}_{2,G}$ approaches the $\zeta^{-1}$-equivariant Dirac measure at the image of $\delta^{\prime}$ in $\Gamma^{\mathcal{E}}(G)$. The expression \label{eq:csl} then approaches a nonzero multiple of $P_{M}(\delta^{\prime},f_{1})$ when $\delta^{\prime}\in\Delta_{G,\elll}(M,\zeta)$, and \label{eq:csll} approaches a zero.
We conclude that $P_{M}(\delta,f_{1})=0$. So we have obtained the required formula
\[S^{G}_{M}(\delta,f_{1})=(-1)^{\dimm(A_{M}/A_{G})}\int_{\Phi_{2}(G,\zeta)}S^{G}(\phi)S\Phi_{M}(\phi,\delta)\widetilde{f_{1}^{G}}(\phi)\,d\phi.\]
\end{proof}

\begin{corollary}
 If we set $S^{G}_{M}(M^{\prime},\delta,f)=n(\delta)^{-1}\int_{\Phi_{2}(G,\zeta)}S^{G}(\phi)S\Phi_{M}(\phi,\delta)\widetilde{f}^{G}(\phi)\mathrm{d}\phi$, and $f$ is a cuspidal function, where $\delta\in \Delta_{ell}(M^{\prime},M,\zeta)$. Then $S^{G}_{M}(M^{\prime},\delta,f)$ vanishes.
\end{corollary}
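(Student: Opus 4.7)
The plan is to recycle the test function argument that appears in the proof of Theorem~\ref{theorem:smt}, but now with the Dirac approximation concentrated on the endoscopic stratum rather than on the split stratum. Recall from that proof the vanishing identity
\[
\eqref{eq:csl} + \eqref{eq:csll} = 0
\]
obtained by combining the two expansions \eqref{eq:lsfg} and \eqref{eq:lsfp} via the stable Weyl integral formula, where the expression \eqref{eq:csll} is exactly a sum over proper endoscopic data $M'\in\mathcal{E}^{0}_{\elll}(M)$ of the integral of $n(\delta)^{-1}S^{G}(\phi)S\Phi_{M}(\phi,\delta)\widetilde{f}^{G}_{1}(\phi)$ against $\overline{f^{M'}_{2}(\delta)}$. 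Reading off the definition of $S^{G}_{M}(M',\delta,f)$, the expression \eqref{eq:csll} can be rewritten as
\[
-\sum_{M\in\mathcal{L}}|W^{M}_{0}||W^{G}_{0}|^{-1}\sum_{M'\in\mathcal{E}^{0}_{\elll}(M)}\int_{\Delta_{\elll}(M',M,\zeta)}S^{G}_{M}(M',\delta,f_{1})\overline{f^{M'}_{2}(\delta)}\,d\delta.
\]

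Next, I fix the data $M\in\mathcal{L}$, $M'\in\mathcal{E}^{0}_{\elll}(M)$, and $\delta_{0}\in\Delta_{\elll}(M',M,\zeta)$. Following the device used in \cite[\S 10]{A8}, I choose a sequence of functions $f_{2}$ in $C(G(\R),\zeta)$ so that the collection $f_{2}^{\mathcal{E}}$ on $\Gamma^{\mathcal{E}}(G)=\coprod_{\{L\}}\Gamma^{\mathcal{E}}_{\elll}(L(\R),\zeta)$ has compact support modulo $Z(\R)$ and approaches the $\zeta^{-1}$-equivariant Dirac measure at the image of $\delta_{0}$ in $\Gamma^{\mathcal{E}}(G)$. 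Since $\delta_{0}$ lies in the stratum $\Delta_{\elll}(M',M,\zeta)$ attached to the proper elliptic endoscopic datum $M'$, it does \emph{not} lie in any stratum $\Delta_{G,\elll}(L,\zeta)$ (which parametrize genuine stable conjugacy classes in Levi subgroups $L$ of $G$). Consequently the expression \eqref{eq:csl}, which integrates against $\overline{f^{L}_{2}(\delta)}$ over $\delta\in\Delta_{G,\elll}(L,\zeta)$, tends to zero in the limit.

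In the limit, the rewritten \eqref{eq:csll} collapses to a nonzero multiple of $S^{G}_{M}(M',\delta_{0},f_{1})$, with multiplicative constant given by $|W^{M}_{0}||W^{G}_{0}|^{-1}$ times the unique coefficient that picks up the Dirac concentrated at $\delta_{0}$ inside $\Delta_{\elll}(M',M,\zeta)$. All other $(L,L',\delta)$-terms in \eqref{eq:csll} vanish in the limit, either because the indexing datum $L'$ differs from $M'$ or because $\delta\neq\delta_{0}$. Combined with the fact that $\eqref{eq:csl}+\eqref{eq:csll}=0$ for every $f_{2}$, we conclude
\[
S^{G}_{M}(M',\delta_{0},f_{1})=0.
\]
Since $M$, $M'\in\mathcal{E}^{0}_{\elll}(M)$, and $\delta_{0}\in\Delta_{\elll}(M',M,\zeta)$ were arbitrary, the corollary follows.

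The only conceivable obstacle is justifying that such a Dirac-approximating sequence of cuspidal data $f_{2}$ exists on the endoscopic stratum $\Delta_{\elll}(M',M,\zeta)$ and isolates precisely the $(M,M',\delta_{0})$ contribution while killing all others in both \eqref{eq:csl} and \eqref{eq:csll}. This is a standard density/separation argument for the stable orbital integrals and their endoscopic transfers, identical in spirit to the one already carried out at the end of the proof of Theorem~\ref{theorem:smt}, so no genuinely new input is required.
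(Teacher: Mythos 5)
Your proof is correct and is precisely the argument the paper intends: the paper's own proof of this corollary consists of the single sentence "The proof of the Corollary comes from the process of the proof of Theorem~\ref{theorem:smt}," and what you have done is spell out that process, namely re-use the vanishing identity $\eqref{eq:csl}+\eqref{eq:csll}=0$ with the Dirac-approximating test function $f_2$ now concentrated at a point of the proper endoscopic stratum $\Delta_{\elll}(M',M,\zeta)$, so that \eqref{eq:csl} vanishes in the limit and \eqref{eq:csll} isolates $S^{G}_{M}(M',\delta_{0},f_{1})$. Nothing essential is missing; you have simply made explicit the details the paper leaves implicit.
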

  The proof of the Corollary comes from the process of the proof of Theorem \ref{theorem:smt}.

\bigskip
\section{Multiplicity formula of discrete series} \label{sec:mf}

We now return to the discussion of section \ref{sec:mi}. In order to establish the multiplicity formula of discrete series, we descend from the $K$-group to the connected reductive group. If we take the test function whose components vanish except for the one from the required connected group, then we can apply the properties of a $K$-group to connected reductive group. We assume that the infinitesimal character $\mu$ is regular. We take the center $Z(\R)=Z(G(\R))$, then $\mathfrak{a}^{\ast}_{G,Z}=1$, and the formula for the stable distribution $S^{G}_{M}(\delta,f_{1})$ simplifies. $\Pi_{\phi}$ is in bijection with $\mathcal{E}(T)$, where $T$ is a maximal torus of $G(\R)$ that is compact modulo centre, and $R_{\phi}$ is trivial. So $|i^{G}(\tau)|$ is trivial, and $\tau=\pi_{\R}\in\Pi_{2}(G(\R),\zeta)$, then
\begin{equation}
\begin{split}
\widetilde{f}_{1}^{G^{\prime}}(\phi^{\prime})&=\sum_{\tau\in T_{\elll}(G,\zeta)}\Delta(\phi^{\prime},\tau)|i^{G}(\tau)|\Theta(\tau,f_{1}) \\
&=\sum_{\pi\in\Pi_{2}(G(\R),\zeta)}\Delta(\phi^{\prime},\pi)\tr\pi(f_{1}) \\
&=f^{G^{\prime}}_{1}(\phi^{\prime}), \\
\end{split}
\end{equation}
and $f^{G^{\prime}}_{1}(\phi^{\prime})=\sum_{\pi\in\Pi_{\phi^{\prime}}}\Theta(\pi,f_{1})$.

\bigskip
 Now our main obstruction is that the pseudo-coefficient $f_{\pi_{\R}}$ is cuspidal, but not stable. However $f_{\pi_{\R}}$ transfers to $f^{\prime}_{\phi^{\prime}_{\mu}}$ on $G^{\prime}$ by Shelstad transfer theorem, where $\phi^{\prime}_{\mu}$ can be parameterized by $\mu^{\prime}$, such that $\mu=\mu^{\prime}+\mu^{\ast}$, and $\mu^{\ast}$ parameterizes the embedding $\xi^{\prime}$, which is a part of the endoscopic data of $G^{\prime}$. Then $f^{\prime}_{\phi^{\prime}_{\mu}}(\phi^{\prime})$
is stable cuspidal, where \[f^{\prime}_{\phi^{\prime}_{\mu}}(\phi^{\prime})=\sum_{\pi\in\Pi_{\phi^{\prime}}}\tr\pi(f^{\prime}_{\phi^{\prime}_{\mu}})\] \[=\sum_{\pi\in\Pi_{2}(G(\R),\zeta)}\Delta(\phi^{\prime},\pi)tr\pi(f_{\pi_{\R}})=\Delta(\phi^{\prime},\pi_{\R}).\]

Then
\[f^{\prime}_{\phi^{\prime}_{\mu}}(\phi^{\prime})=
\begin{cases}
\Delta(\phi^{\prime}_{\mu},\pi_{\R})& \text{if $\phi^{\prime}=\phi^{\prime}_{\mu}$},\\
0& \text{otherwise}.
\end{cases}\]

We take $f^{\prime}_{\phi^{\prime}_{\mu}}=\Delta(\phi^{\prime}_{\mu},\pi_{\R})f_{\phi^{\prime}_{\mu}}$ and $f_{\phi^{\prime}_{\mu}}=\frac{1}{|\mathcal{S}_{\phi^{\prime}_{\mu}}|}\sum_{\pi\in\Pi_{\phi^{\prime}_{\mu}}}f_{\pi}$, where $f_{\pi}$ is pseudo-coefficient of $\pi$.

\bigskip
  We assume that $G$ is a quasisplit connected reductive group, the test function $f$ is stable cuspidal on $G$. Then we get a simple stable distribution from \eqref{eq:stfm}, \[S^{G}_{M}(\delta,f)=(-1)^{\dimm(A_{M}/A_{G})}\sum_{\phi\in\Phi_{2}(G,\zeta)}S^{G}(\phi)S\Phi_{M}(\phi,\delta)f^{G}(\phi),\]
where
\[S\Phi_{M}(\phi,\delta)=\sum_{\pi\in\Pi_{\phi}}\sum_{\gamma\in\Gamma_{\elll}(M,\zeta)}\Delta(\delta,\gamma)\overline{\Phi_{M}(\pi,\gamma)},\] \[\Phi_{M}(\pi,\gamma)=|D^{G}(\gamma)|^{1/2}\Theta_{\pi}(\gamma)=I_{M}(\pi,\gamma)\] as in \cite{A3}, and \[f^{G}(\phi)=\sum_{\pi\in\Pi_{\phi}}\tr\pi(f)=|\mathcal{S}_{\phi}|\overline{tr\widetilde{\pi}(\bar{f})}.\]

So \[ S^{G}_{M}(\delta,f)=(-1)^{\dimm(A_{M}/A_{G})}\sum_{\phi\in\Phi_{2}(G,\zeta)}\sum_{\pi\in\Pi_{\phi}}\sum_{\gamma\in\Gamma_{\elll}(M,\zeta)}|\mathcal{S}_{\phi}|S^{G}(\phi)\Delta(\delta,\gamma)\overline{I_{M}(\pi,\gamma)}\overline{\tr\widetilde{\pi}(\bar{f})},
\]
where \[|\mathcal{S}_{\phi}|S^{G}(\phi)=|\mathcal{S}_{\phi}||\mathcal{S}_{\phi}|^{-1}=1.\]
However,
\[
\begin{split}
\sum_{\phi\in\Phi_{2}(G,\zeta)}\sum_{\pi\in\Pi_{\phi}}\overline{I_{M}(\pi,\gamma)tr\widetilde{\pi}(\bar{f})}&=\sum_{\pi\in\Pi_{2}(G(\R),\zeta)}\overline{I_{M}(\pi,\gamma)\tr\widetilde{\pi}(\bar{f})} \\
&=(-1)^{\dimm(A_{M}/A_{G})}\vol(T(\R)/A_{M}(\R)^{0})\overline{I_{M}(\gamma,\bar{f})} \\
&=(-1)^{\dimm(A_{M}/A_{G})}|D^{M}(\gamma)|^{1/2}\vol(T(\R)/A_{M}(\R)^{0})\overline{\Phi_{M}(\gamma,\bar{f})} .\\
\end{split}
\]

Then we obtain the following theorem.

\begin{theorem}
Suppose that $G$ is a quasisplit $K$-group, $f\in C(G(\R),\zeta)$ is stable cuspidal and $\delta\in \Delta(M(\R),\zeta)$,
then \[
S^{G}_{M}(\delta,f)=(-1)^{\dimm(A_{M}/A_{G})}\sum_{\phi\in\Phi_{2}(G,\zeta)}S^{G}(\phi)S\Phi_{M}(\phi,\delta)f^{G}(\phi)\]
\begin{equation}\label{formula}
=\sum_{\gamma\in\Gamma_{\elll}(M,\zeta)}\Delta(\delta,\gamma)|D^{M}(\gamma)|^{1/2}\vol(T(\R)/A_{M}(\R)^{0})\overline{\Phi_{M}(\gamma,\bar{f})}.
\end{equation}
\end{theorem}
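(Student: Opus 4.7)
The plan is to deduce both equalities via specialization of Theorem \ref{theorem:smt} to stable cuspidal $f$, followed by an application of Arthur's inverse Harish-Chandra character formula \eqref{eq:idf}. For the first equality, I would apply Theorem \ref{theorem:smt} to $f$ directly. Since the central character data here normalizes $\mathfrak{a}^{\ast}_{G,Z}$ to $0$, the integral $\int_{\Phi_{2}(G,\zeta)} d\phi$ collapses to a discrete sum. Moreover, for any $\tau = \pi \in \Pi_{2}(G(\R),\zeta)$ one has $d(\tau)=1$ and $R_{\pi,r}$ trivial, so $i^{G}(\tau) = 1$ and the auxiliary function $\widetilde{f}^{G}$ coincides with $f^{G}$ on $\Phi_{2}(G,\zeta)$. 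This yields
\[S^{G}_{M}(\delta,f) = (-1)^{\dimm(A_{M}/A_{G})}\sum_{\phi \in \Phi_{2}(G,\zeta)} S^{G}(\phi)\, S\Phi_{M}(\phi,\delta)\, f^{G}(\phi).\]

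For the second equality, I would expand $S\Phi_{M}(\phi,\delta)$ via its definition and simultaneously substitute $f^{G}(\phi) = |\mathcal{S}_{\phi}|\,\overline{\tr\widetilde{\pi}(\bar f)}$ for any $\pi \in \Pi_{\phi}$: this identity holds because stable cuspidality of $f$ forces $\tr\pi(f)$ to depend only on the L-packet $\Pi_{\phi}$, and for elliptic parameters $|\Pi_{\phi}| = |\mathcal{S}_{\phi}|$. The cardinality $|\mathcal{S}_{\phi}|$ then cancels against $S^{G}(\phi) = |\mathcal{S}_{\phi}|^{-1}$. Interchanging the summations and exploiting the disjoint decomposition $\Pi_{2}(G(\R),\zeta) = \bigsqcup_{\phi} \Pi_{\phi}$ collapses the double sum over $(\phi,\pi \in \Pi_{\phi})$ into a single sum over $\pi \in \Pi_{2}(G(\R),\zeta)$, leaving only the inner quantity $\sum_{\pi} \overline{I_{M}(\pi,\gamma)\,\tr\widetilde{\pi}(\bar f)}$ to be understood.

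The crux is to evaluate this inner sum. Here I would invoke \eqref{eq:idf} applied to $\bar f$, combined with the explicit formula $\Phi_{M}(\gamma,\uptau) = (-1)^{q(G)}|D^{G}_{M}(\gamma)|^{1/2}\sum_{\pi \in \Pi_{\phi}}\Theta_{\pi}(\gamma)$ from Section \ref{sec:mi} and the relation $I_{M}(\pi,\gamma) = |D(\gamma)|^{1/2}\Theta_{\pi}(\gamma)$. Using the factorization $|D(\gamma)| = |D^{M}(\gamma)||D^{G}_{M}(\gamma)|$ and the observation that for $\gamma$ elliptic in $M$ the centralizer $M_{\gamma}$ is an elliptic torus, so that $v(M_{\gamma})$ reduces to $\vol(T(\R)/A_{M}(\R)^{0})$, the inner sum telescopes to
\[(-1)^{\dimm(A_{M}/A_{G})}\,\vol(T(\R)/A_{M}(\R)^{0})\,\overline{I_{M}(\gamma,\bar f)}.\]
Substituting this back and applying $I_{M}(\gamma,\bar f) = |D^{M}(\gamma)|^{1/2}\Phi_{M}(\gamma,\bar f)$ produces the required second equality after collecting the outer $\Delta(\delta,\gamma)$ and the sum over $\gamma$.

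The main obstacle is the careful bookkeeping of signs --- in particular the $(-1)^{q(G)}$ factor appearing in $\Phi_{M}(\gamma,\uptau)$, the independent $(-1)^{\dimm(A_{M}/A_{G})}$ of \eqref{eq:idf}, and the effect of complex conjugation when passing from $f$ to $\bar f$ in the Shelstad transfer --- together with the identification of $v(M_{\gamma}) = (-1)^{q(M_{\gamma})}\vol(M_{\gamma}^{\ast}/A_{M_{\gamma}}(\R)^{0})|\mathscr{D}(M_{\gamma},B)|^{-1}$ with $\vol(T(\R)/A_{M}(\R)^{0})$ in the elliptic torus case. In that case $q(M_{\gamma})=0$ because $M_{\gamma}$ is a torus anisotropic modulo $A_{M}$, while the discriminant factor $|\mathscr{D}(M_{\gamma},B)|$ is absorbed by the normalization of the measure on $\Gamma_{\elll}(M,\zeta)$ recalled in Section \ref{sec:cs}. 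Once these identifications are verified, the remainder is algebraic manipulation.
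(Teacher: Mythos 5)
Your proposal follows the paper's derivation essentially verbatim: apply Theorem \ref{theorem:smt} with the reductions $\mathfrak{a}^{\ast}_{G,Z}=0$ and $\widetilde{f}^{G}=f^{G}$ for the first equality, then expand $S\Phi_{M}(\phi,\delta)$ and $f^{G}(\phi)=|\mathcal{S}_{\phi}|\,\overline{\tr\widetilde{\pi}(\bar f)}$, cancel $|\mathcal{S}_{\phi}|\cdot S^{G}(\phi)=1$, collapse $\sum_{\phi}\sum_{\pi\in\Pi_{\phi}}$ to $\sum_{\pi\in\Pi_{2}(G(\R),\zeta)}$, and evaluate $\sum_{\pi}\overline{I_{M}(\pi,\gamma)\tr\widetilde{\pi}(\bar f)}$ via Arthur's character inversion \eqref{eq:idf}. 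The sign and volume identifications you flag ($\upsilon(M_{\gamma})=\vol(T(\R)/A_{M}(\R)^{0})$ with $q(M_{\gamma})=0$ and $|\mathscr{D}(M_{\gamma},B)|=1$ for an elliptic torus, and the internal cancellation of $(-1)^{q(G)}$) are exactly the checks the paper performs implicitly in the displayed chain of equalities preceding the theorem. The one element you omit is what the paper's written proof actually consists of: the verification that $S^{G}_{M}(\delta,f)$ vanishes for $\delta$ not semisimple, obtained from the vanishing of $\Phi_{M}(\gamma,\bar f)$ for non-semisimple $\gamma$ together with a density argument passing from $\mathcal{H}_{\ac}(G(\R),\zeta)$ to $C(G(\R),\zeta)$ via the trace Paley--Wiener theorem; since your computation is stated only for regular $\delta$ (the domain of Theorem \ref{theorem:smt}), this extension step is needed to get the stated domain $\delta\in\Delta(M(\R),\zeta)$, but it is a routine addendum.
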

In particular, $S^{G}_{M}(\delta,f)$ vanishes, if $\delta$ is not semisimple.
\begin{proof}
 We just need to check that the stable distribution $S^{G}_{M}(\delta,f)$ vanishes, if $\delta$ is not semi-simple. We know $\Phi_{M}(\gamma,\bar{f})$ vanishes, if $\gamma$ is not semisimple, and $f\in\mathcal{H}_{\ac}(G(\R),\zeta)$ is stable cuspidal. However,  $\mathcal{H}_{\ac}(G(\R),\zeta)$ is dense in $C(G(\R),\zeta)$, so if $f\in C(G(\R),\zeta)$ is stable cuspidal, then $f$ can be approached by $f^{\prime}\in \mathcal{H}_{\ac}(G(\R),\zeta)$, which is also stable cuspidal. we can use the trace Paley-Wiener theorem to extend the result to $C(G(\R),\zeta)$.
\end{proof}
We denote $S\Phi^{G}_{M}(\delta,f)=|D^{M}(\delta)|^{-\frac{1}{2}}S^{G}_{M}(\delta,f)$. Form the above theorem, we have defined $S\Phi^{G}_{M}(\delta,f)$ on the stable conjugacy classes of strongly regular points in $M(\mathbb{R})$. $\Delta(\delta,\gamma)$ is a continuous function on $\delta$ and the point $\gamma\in M(\mathbb{R})$. The summation in (\ref{formula}) is finite sum by the property of transfer factor. If the transfer factor $\Delta(\delta,\gamma)\neq 0$ for given $\delta$, then $|D^{M}(\delta)|=|D^{M}(\gamma)|$. So $S\Phi_{M}^{G}(\delta,f)$ is a continuous function on the stable conjugacy classes of strongly regular points in $M(\R)$.

\bigskip
We can now give a dimension formula for spaces of automorphic forms. For each $\pi_{\R}\in\Pi_{2}(G(\R),\zeta)$, let $m_{\disc}(\pi_{\R},K_{0})$ be the multiplicity of $\pi_{\R}$ in
\begin{equation}\label{eq:spdc}
 L^{2}(G(\Q)\backslash G(\A)/K_{0},\zeta)=\oplus^{n}_{i=1} L^{2}(\Gamma_{i}\backslash G(\R),\zeta),
\end{equation}
where $K_{0}$ is an open compact subgroup of $G(\A_{\fin})$, and $\{\Gamma_{i}\}$  are the discrete subgroup. Let $h$ be a $K_{0}$ bi-invariant function  in $\mathcal{H}(G(\A_{\fin}))$. Let $R_{\disc}(\pi_{\R},h)$ be the operator on $\pi_{\R}$-isotypical subspace of \eqref{eq:spdc}, it can be interpreted as a $\bigg(m_{\disc}(\pi_{\R},K_{0})\times m_{\disc}(\pi_{\R},K_{0})\bigg)$ matrix.

Then Proposition \ref{prop:cmf} yields the formula,
\begin{equation*}
\begin{split}
 &\tr(R_{\disc}(\pi_{\R},h))=I(f_{\pi_{\R}}h)\\
 =\sum_{G^{\prime}\in\mathcal{E}_{\elll}(G)}\iota(G,G^{\prime})&\sum_{M^{\prime}\in\mathcal{L}^{G^{\prime}}}|W^{M^{\prime}}_{0}||W^{G^{\prime}}_{0}|^{-1}\sum_{\delta\in\{M^{\prime}(\Q)\}_{M^{\prime},S}}b^{M^{\prime}}(\delta)S^{G^{\prime}}_{M^{\prime}}(\delta,f_{\pi_{\R}})(h_{M})^{M^{\prime}}(\delta),
 \end{split}
 \end{equation*}
where $M\in\mathcal{L}(G)$ and $M^{\prime}\in\mathcal{E}(M)$. The function $f_{\pi_{\R}}$ is a pseudo-coefficient of $\pi_{\R}$, which belongs to $C_{\cusp}(G(\R),\zeta)$, and
\[S^{G^{\prime}}_{M^{\prime}}(\delta,f_{\pi_{\R}})=\widehat{S}^{G^{\prime}}_{M^{\prime}}(\delta,f^{\prime}_{\phi^{\prime}_{\R}})=(-1)^{\dimm(A_{M^{\prime}}/A_{G^{\prime}})}S^{G^{\prime}}(\phi^{\prime}_{\mu})S\Phi_{M^{\prime}}(\phi^{\prime}_{\mu},\delta)\Delta(\phi^{\prime}_{\mu},\pi_{\R}).\]

In particular, the stable distribution vanishes unless $\delta$ is semisimple. The set of equivalence classes in $\{M(\Q)\}_{M,S}$ is just $M(\Q)$-semisimple stable conjugacy classes. Moreover, for any semisimple elliptic stable conjugacy classes $\delta\in \{M(\Q)\}_{\elll},$
 we have the global coefficient $b^{M^{\prime}}(\delta)=b^{M^{\prime}}_{\elll}(\delta)=\tau(M^{\prime})$.

  We discuss the transfer of $h_{M}(\gamma)$. For the given function $h\in\mathcal{H}(G(\mathbb{A}_{\fin}))$, we can find a function $h^{G^{\prime}}\in\mathcal{H}(G^{\prime}(\mathbb{A}_{\fin}))$ whose orbital integrals match those of $h$. In other words we need $h^{G^{\prime}}\in\mathcal{H}(G^{\prime}(\mathbb{A}_{\fin}))$ such that for all $\delta\in G^{\prime}_{\reg}(\mathbb{A}_{\fin})$
  \[ (h_{M})^{M^{\prime}}(\delta)=\sum_{\gamma}\Delta(\delta,\gamma)h_{M}(\gamma),\]
where the sum is taken over $M(\mathbb{A}_{\fin})$-conjugacy classes of images $\gamma\in M(\mathbb{A})_{\fin}$ of $\delta\in M^{\prime}(\mathbb{A})_{\fin}$. It follows from Ngo's proof of the Fundamental Lemma, and Waldspurger's results \cite{W1},\cite{W2} that the Fundamental Lemma implies the transfer. Thus the function $(h_{M})^{M^{\prime}}$ exists, and $h_{M}$ is defined in terms of $h$ by the formula:
\[ h_{M}(\gamma)=\delta_{P}(\gamma_{\fin})^{1/2}\int_{K_{\fin}}\int_{N_{P}(\A_{\fin})}\int_{M_{\gamma}(\A_{\fin})\backslash M(\A_{\fin})} h(k^{-1}m^{-1}\gamma mnk)\,dmdndk.
\]
Here $P=MN_{P}$ is any parabolic subgroup with Levi component $M$, $\delta_{P}(\gamma_{\fin})$ is the modular function of $P$, evaluated at the image of $\gamma$ in $G(\A_{\fin})$. In particular, the stable orbital integral on $M^{\prime}$ can be taken over $M^{\prime}(\A_{\fin})$ rather then $M^{\prime}(\Q_{S_{0}})$. Here $S_{0}=S-\{\infty\}$, and we have no further need to single out the finite set of valuations.
\begin{remark}
 I thank Waldspurger for pointing out the following fact. The Fundamental Lemma is proved in the unramified case. If the situation is not unramified, the problem is more complicated. There are several conjugacy classes of maximal compact subgroups, and there is no natural correspondence between maximal compact subgroups of $G$ and maximal subgroups of $G^{\prime}$. So we cannot expect a simple formula for the transfer of the characteristic function of some maximal compact subgroup of $G$. Maybe, we can hope that this transfer is a linear combination of characteristic functions of maximal compact subgroups of $G^{\prime}$.
\end{remark}
We set
\[ P_{\mu}(M^{\prime})=S^{G^{\prime}}(\phi^{\prime}_{\mu})\Delta(\phi^{\prime}_{\mu},\pi_{\R})\tau(M^{\prime})\]
and we also write $\{M(\Q)\}$ for the set of $M(\Q)$-semisimple stable conjugacy classes in $M(\Q)$. We have obtained the main theorem.
\begin{theorem} \label{theorem: m}
 If $h\in \mathcal{H}(G(\A_{\fin}))$, and the highest weight $\mu$ of representation is regular, then we have
  \begin{equation*}
  \begin{split}
  &\tr(R_{\disc}(\pi_{\R},h))   \\
=&\sum_{G^{\prime}\in\mathcal{E}_{\elll}(G)}\iota(G,G^{\prime})\sum_{M^{\prime}\in\mathcal{L}^{G^{\prime}}}(-1)^{\dimm(A_{M^{\prime}}/A_{G^{\prime}})}|W^{M^{\prime}}_{0}||W^{G^{\prime}}_{0}|^{-1}\sum_{\delta\in\{M^{\prime}(\Q)\}}P_{\mu}(M^{\prime})S\Phi_{M^{\prime}}(\phi^{\prime}_{\mu},\delta)(h_{M})^{M^{\prime}}(\delta),
\end{split}
\end{equation*}
 and the multiplicity formula of the discrete series \[  m_{\disc}(\pi_{\R},K_{0})=\tr(R_{\disc}(\pi_{\R},\mathit{1}_{K_{0}})).
   \]
\end{theorem}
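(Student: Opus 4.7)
The plan is to start from Proposition \ref{prop:cmf}, which already expresses $\tr(R_{\disc}(\pi_{\R},h))$ as the stabilized invariant trace formula
\[
\tr(R_{\disc}(\pi_{\R},h)) = \sum_{G'\in\mathcal{E}_{\elll}(G)}\iota(G,G')\sum_{M'\in\mathcal{L}^{G'}}|W^{M'}_{0}||W^{G'}_{0}|^{-1}\sum_{\delta}b^{M'}(\delta)S^{G'}_{M'}(\delta,f_{\pi_{\R}})(h_{M})^{M'}(\delta),
\]
and then make each local and global factor explicit. The first step is to plug in the formula for $S^{G'}_{M'}(\delta,f_{\pi_{\R}})$ derived in Section \ref{sec:mf}: since $f_{\pi_{\R}}$ transfers (via the Shelstad map) to $f'_{\phi'_{\mu}} = \Delta(\phi'_{\mu},\pi_{\R}) f_{\phi'_{\mu}}$, which is stable cuspidal on $G'$, the corollary of Theorem \ref{theorem:smt} together with equation (\ref{formula}) yields
\[
S^{G'}_{M'}(\delta,f_{\pi_{\R}}) = (-1)^{\dimm(A_{M'}/A_{G'})}S^{G'}(\phi'_{\mu})\,S\Phi_{M'}(\phi'_{\mu},\delta)\,\Delta(\phi'_{\mu},\pi_{\R}).
\]

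The second step is to cut down the sum over $\delta$ to semisimple stable classes. By Proposition \ref{prop:compa} (combined with Theorem \ref{theorem: m}'s vanishing statement, which itself comes from the stable cuspidality of $f'_{\phi'_{\mu}}$ and the transfer to connected components $G'_{\alpha}$), $S^{G'}_{M'}(\delta,f_{\pi_{\R}})$ vanishes unless $\delta$ is semisimple. The regularity of $\mu$ guarantees that the parameter $\phi'_{\mu}$ is discrete, so the surviving $\delta$ are semisimple elliptic elements in $\{M'(\Q)\}$. At such $\delta$, the stable global coefficient simplifies via the identity recorded in Section \ref{sec:gs}, namely $b^{M'}(\delta) = b^{M'}_{\elll}(\delta) = \tau(M')$ (using $j^{M'}(S,\delta) = \tau(M')\tau(T)^{-1}$ and $b^{T}_{\elll}(1) = \tau(T)$).

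The third step is merely bookkeeping: absorb the unchanging constants into $P_{\mu}(M') := S^{G'}(\phi'_{\mu})\Delta(\phi'_{\mu},\pi_{\R})\tau(M')$, carry the sign $(-1)^{\dimm(A_{M'}/A_{G'})}$ outside, and replace the basis $\Delta(M',V,\zeta)$ by the set $\{M'(\Q)\}$ of semisimple stable conjugacy classes. The transfer term $(h_{M})^{M'}(\delta)$ is legitimate as written because, by Ngo's proof of the Fundamental Lemma together with Waldspurger's transfer results \cite{W1}, \cite{W2}, a global transfer $h^{G'}\in\mathcal{H}(G'(\A_{\fin}))$ matching the orbital integrals of $h$ exists, and $h_{M}$ is defined by the constant term integral recorded just before the theorem statement. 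Combining these three steps delivers the displayed formula for $\tr(R_{\disc}(\pi_{\R},h))$.

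For the multiplicity formula itself, I specialize $h$ to the characteristic function $\mathbf{1}_{K_{0}} \in \mathcal{H}(G(\A_{\fin}))$ and invoke the identity (\ref{eq:mii}), namely $I(f_{\pi_{\R}}\mathbf{1}_{K_{0}}) = m_{\disc}(\pi_{\R},K_{0})$, whose derivation was given in Section \ref{sec:mi} using the homological interpretation of the multiplicity and $\tr\pi_{\fin}(\mathbf{1}_{K_{0}}) = \dim V(\pi^{K_{0}}_{\fin})$. The only genuinely nontrivial obstacle in the argument is the identification $b^{M'}(\delta) = \tau(M')$ for semisimple elliptic $\delta$, which invokes Kottwitz's computation of the Tamagawa number combined with Arthur's descent formula for $b^{M'}_{\elll}$; everything else is a systematic assembly of the stabilization machinery established in the earlier sections.
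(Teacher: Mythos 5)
Your proposal is correct and follows essentially the same route as the paper: start from Proposition \ref{prop:cmf}, substitute the explicit formula for $S^{G'}_{M'}(\delta,f_{\pi_{\R}})$ coming from the Shelstad transfer of the pseudo-coefficient and the regularity of $\mu$ (which collapses the integral in Theorem \ref{theorem:smt} to a single parameter $\phi'_\mu$ and makes $\widetilde{f}_1 = f_1$ since the $R$-groups are trivial), invoke the vanishing of $S^{G'}_{M'}(\delta,f_{\pi_{\R}})$ off the semisimple locus, replace $b^{M'}(\delta)$ by $\tau(M')$ via Kottwitz and Arthur's descent, collect constants into $P_\mu(M')$, and finally specialize $h=\mathbf{1}_{K_0}$ via \eqref{eq:mii}. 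The one small imprecision is your citation of the ``corollary of Theorem \ref{theorem:smt} together with equation \eqref{formula}'' as the source of the formula for $S^{G'}_{M'}(\delta,f_{\pi_{\R}})$; the corollary is a vanishing statement for $\delta$ on the endoscopic side, whereas the formula you state actually comes directly from Theorem \ref{theorem:smt} applied on $G'$ with the transferred test function, after observing that the pseudo-coefficient makes the integral over $\Phi_2(G',\zeta)$ concentrate at $\phi'_\mu$.
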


\begin{remark}
\begin{enumerate}

\item The sum in $\delta$ can be taken over a finite sum that depends only on the support of $h$, so the theorem therefore provides a finite closed formula for $\tr(R_{\disc}(\pi_{\R},h))$.
\end{enumerate}
\end{remark}

\bigskip
\section{The stable formula of $L^{2}$-Lefschetz number}\label{sec:sl}

 We can give a stable formula for $L^{2}$-Lefschetz number. We can refer to the Arthur's paper \cite{A3}, which gives a formula through the invariant trace formula. We need to recall the basic result in \cite{A3}. We still use the notation in \cite{A3}. If $h\in\mathcal{H}(G(A_{\fin}))$, then the $L^{2}$-Lefschetz number
 \begin{equation}
 \begin{split}
   \mathcal{L}_{\mu}(h)&=\sum_{q}(-1)^{q}\tr(H^{q}_{2}(h,\mathcal{F}_{\mu}))\\
                       &=\sum_{\pi\in\Pi(G(\mathbb{A},\xi))}m_{\disc}(\pi)\chi_{\mu}(\pi_{\mathbb{R}})\tr\pi_{\fin}(h)\\
                       &=\sum_{\pi\in\Pi(G(\mathbb{A},\xi))}m_{\disc}(\pi)\tr\pi_{\mathbb{R}}(f_{\mu})\tr\pi_{\fin}(h) \\
                       &=\sum_{\pi\in\Pi(G(\mathbb{A}),\xi)}m_{\disc}(\pi)\tr(f_{\mu}h)\\
                       &=I(f_{\mu}h),
   \end{split}
 \end{equation}

 where $\chi_{\mu}(\pi_{\mathbb{R}})=\sum_{q}(-1)^{q}\dimm H^{q}(g(\mathbb{R}),K'_{R};\pi_{\mathbb{R}}\otimes\mu))$, and
 \begin{equation}
 \chi_{\mu}(\pi_{\mathbb{R}})=\tr\pi_{\mathbb{R}}(f_{\mu})=
 \begin{cases}
 (-1)^{q(G)}&\quad \text{if $\pi_{\mathbb{R}}\in \Pi_{\disc}(\widetilde{\mu})$},\\
 0          & \quad \text{otherwise}.
 \end{cases}
 \end{equation}
 If the test function $f$ is stable cuspidal, then the invariant distribution is equal to zero on the non-trivial unipotent elements. So we have
 \[I(f_{\mu}h)=\sum_{M\in\mathcal{L}}(-1)^{\dimm(A_{M}/A_{G})}|W^{M}_{0}||W_{0}^{G}|^{-1}\sum_{\gamma\in(M(\mathbb{Q}))}\chi(M_{\gamma})|\iota^{M}(\gamma)|^{-1}\Phi_{M}(\gamma,\mu)h_{M}(\gamma)\]
where $\chi(M_{\gamma})=(-1)^{q(M_{\gamma})}\vol(M_{\gamma}(\mathbb{Q})A_{M_{\gamma}}(\mathbb{R})^{\circ}\backslash M_{\gamma}(\mathbb{A}))\vol(A_{M_{\gamma}}(\mathbb{R})^{\circ}\backslash\overline{M_{\gamma}}(\R))^{-1}|\mathcal{D}(M_{\gamma},B)|$, $\mathcal{D}(G,B)=W(G(\mathbb{R}),B(\mathbb{R}))\backslash W(G,B)$, $|\iota^{M}(\gamma)|=|M_{\gamma}(\mathbb{Q})\backslash M(\mathbb{Q,\gamma})|$, and $(M(\mathbb{Q}))$ is the set of $M(\mathbb{Q})$-conjugacy classes of $M(\mathbb{Q})$.

\bigskip
Since $f_{\mu}$ is stable cuspidal, we can use the Proposition \ref{prop:cmf}. Then we obtain the stable formula
  \[I(f_{\mu}h)=\sum_{G^{'}\in\mathcal{E}_{\elll}(G)}\iota(G,G^{'})\sum_{M^{'}\in\mathcal{L}^{G^{'}}}|W^{M^{'}}_{0}||W^{G^{'}}_{0}|^{-1}\sum_{\delta\in\Delta(M^{'},V,\zeta)}b^{M^{'}}(\delta)S^{G^{'}}_{M^{'}}(\delta,f_{\mu})(h_{M})^{M^{'}}(\delta).\]
Where $S^{G^{\prime}}_{M^{\prime}}(\delta,f_{\mu})=(-1)^{\dimm(A_{M^{\prime}}/A_{G^{\prime}})}\sum_{\phi^{\prime}\in\Phi_{2}(G^{\prime},\zeta)}S^{G^{\prime}}(\phi^{\prime})S\Phi_{M}(\phi^{\prime},\delta)f^{G^{\prime}}_{\mu}(\phi^{\prime})$
  and
  \begin{equation}
f^{G^{\prime}}_{\mu}(\phi^{\prime})=\sum_{\pi\in\Pi_{2}(G(R),\zeta)}\Delta(\phi^{\prime},\pi)\tr\pi(f_{\mu})=
  \begin{cases}
 (-1)^{q(G)}\sum_{\pi\in\Pi_{\phi}}\Delta(\phi^{\prime},\pi) &\quad \text{if $\phi=\phi(\mu,\lambda)$},\\
 0                                                           &\quad  \text{otherwise},
  \end{cases}
  \end{equation}
where $q(G)=\frac{1}{2}\dimm(G(\mathbb{R})/K_{\mathbb{R}})$, and $f^{G^{\prime}}_{\mu}$ is a stable cuspidal function. We obtain the stable distribution
\[S^{G^{\prime}}_{M^{\prime}}(\delta,f_{\mu})=(-1)^{\dimm(A_{M^{\prime}}/A_{G^{\prime}})+q(G)}\sum_{\pi\in\Pi_{\phi(\mu,\lambda)}}\Delta(\phi^{\prime},\pi)S^{G^{\prime}}(\phi^{\prime})S\Phi_{M^{\prime}}(\phi^{\prime},\delta).\]

 \begin{remark}
If $G=G^{\prime}=G^{s=1}$ is a quasisplit group, and $\Delta(\phi^{s},\pi)=\xi(s)$ is just a character \cite{S3}, we have the simple formula for the stable distribution.
  \begin{equation}
 f^{G^{s}}_{\mu}(\phi^{\prime})=
 \begin{cases}
 (-1)^{q(G)}|\mathcal{S}_{\phi}|&\quad \text{if $s=1$ and $\phi=\phi(\mu,\lambda)$},\\
 0                              &\quad \text{otherwise},
 \end{cases}
 \end{equation}
where $|\mathcal{S_{\phi}}|$ is the cardinal number of the $L$-packet of $\phi$.
\end{remark}

Since $f^{G^{\prime}}_{\mu}$ is stable cuspidal, we have $S^{G^{\prime}}_{M^{\prime}}(\delta,f_{\mu})=\widehat{S}^{G^{\prime}}_{M^{\prime}}(\delta, f^{G^{\prime}}_{\mu})=0$, if $\delta$ is a not semisimple element of $M^{\prime}$. While we have $b^{M^{\prime}}(\delta)=\tau(M^{\prime})$, if $\delta$ is a semisimple, elliptic element.
  The set of equivalence classes in $\Delta(M^{\prime},V,\xi)$ equals the set of equivalence classes in $\Delta(\bar{M^{\prime}},V)$, which are just $\bar{M^{\prime}}$-strongly regular stable conjugacy classes, where $\bar{M^{\prime}}=M^{\prime}/Z$.
 We set
  \[F_{\mu}(M^{\prime})=(-1)^{\dimm(A_{M^{\prime}}/A_{G^{\prime}})+q(G^{\prime})}\tau(M^{\prime})S^{G}(\phi^{\prime})\sum_{\pi\in\Pi_{\phi(\mu,\lambda)}}\Delta(\phi^{\prime},\pi),\]
 where $(-1)^{q(G^{\prime})}=(-1)^{q(G)}$, and we also write $\{M(\Q)\}$ for the set of stable $M(\Q)$-semisimple conjugacy classes in $M(\Q)$.
 Thus we obtain the following theorem.

  \begin{theorem}
  For any $h\in \mathcal{H}(G(\mathbb{A}_{\fin}))$, we have
  \[\mathcal{L}_{\mu}(h)=\sum_{G^{'}\in\mathcal{E}_{\elll}(G)}\iota(G,G^{'})\sum_{M^{'}\in\mathcal{L}^{G^{'}}}|W^{M^{'}}_{0}||W^{G^{'}}_{0}|^{-1}\sum_{\delta\in\{M^{\prime}(\mathbb{Q})\}}F_{\mu}(M^{\prime})S\Phi_{M^{\prime}}(\phi^{\prime},\delta)(h_{M})^{M^{\prime}}(\delta).\]

  \end{theorem}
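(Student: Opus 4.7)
The plan is to realize $\mathcal{L}_\mu(h)$ as the invariant distribution $I(f_\mu h)$ where $f_\mu$ is the stable cuspidal function described in Section~\ref{sec:sl}, then apply the stable trace formula in the form given by Proposition~\ref{prop:cmf}, and finally substitute the explicit formula for $S^{G'}_{M'}(\delta,f_\mu)$ that was developed in Sections~\ref{sec:cs}--\ref{sec:mf}. Concretely, I would first invoke the cohomological identity $\mathcal{L}_\mu(h)=\sum_{\pi} m_{\disc}(\pi)\chi_\mu(\pi_\R)\tr\pi_{\fin}(h)$ together with the Euler characteristic interpretation $\chi_\mu(\pi_\R)=\tr\pi_\R(f_\mu)$, which rewrites the Lefschetz number as $I(f_\mu h)$. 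Since $f_\mu$ is stable cuspidal rather than merely cuspidal, Proposition~\ref{prop:cmf} (whose proof goes through for any cuspidal archimedean test function and the stable splitting formula) applies and expands $I(f_\mu h)$ as the displayed endoscopic sum over $G'\in\mathcal{E}_{\elll}(G)$, $M'\in\mathcal{L}^{G'}$ and $\delta\in\Delta(M',V,\zeta)$.

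The next step is to compute $S^{G'}_{M'}(\delta,f_\mu)$ using the formula in Theorem~\ref{theorem:smt} applied to the quasisplit $K$-group $G'$. Because $f_\mu$ is stable cuspidal, its transfer $f_\mu^{G'}$ is again stable cuspidal, so the reformulation in Section~\ref{sec:mf} gives
\[
S^{G'}_{M'}(\delta,f_\mu)=(-1)^{\dimm(A_{M'}/A_{G'})}\sum_{\phi'\in\Phi_2(G',\zeta)}S^{G'}(\phi')S\Phi_{M'}(\phi',\delta)f_\mu^{G'}(\phi').
\]
I then evaluate $f_\mu^{G'}(\phi')$ by inserting $\tr\pi(f_\mu)=(-1)^{q(G)}$ for $\pi\in\Pi_{\disc}(\widetilde{\mu})$ and zero otherwise, using Shelstad's spectral transfer factors; this forces $\phi'$ to be the elliptic Langlands parameter $\phi(\mu,\lambda)$ attached to $\mu$ and produces the factor $(-1)^{q(G')}\sum_{\pi\in\Pi_\phi}\Delta(\phi',\pi)$ that is absorbed into $F_\mu(M')$. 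At this stage I also invoke $q(G')=q(G)\pmod 2$ and the vanishing of $S^{G'}_{M'}(\delta,f_\mu)$ on non-semisimple classes, which is immediate from the stability of $f_\mu^{G'}$ combined with Arthur's vanishing result that is recalled in Proposition~\ref{prop:compa}.

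The final reduction is combinatorial: because $S^{G'}_{M'}(\delta,f_\mu)$ vanishes unless $\delta$ is semisimple and elliptic in $M'$, the sum over $\delta\in\Delta(M',V,\zeta)$ collapses to a sum over the stable semisimple $M'(\Q)$-conjugacy classes $\{M'(\Q)\}$. On this elliptic locus the stable coefficient $b^{M'}(\delta)$ equals $b^{M'}_{\elll}(\delta)=\tau(M')$ by the descent formula of \cite{A10} together with the identification $b^{G'}(\delta')=\tau(G')$ for elliptic semisimple $\delta'$ recalled in Section~\ref{sec:gs}. Bundling the sign, $\tau(M')$, $S^{G'}(\phi'_\mu)$ and $\sum_{\pi\in\Pi_\phi}\Delta(\phi'_\mu,\pi)$ into the single coefficient $F_\mu(M')$ yields exactly the asserted formula.

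The principal technical obstacle is the compatibility of the three ingredients sitting between $I(f_\mu h)$ and the final expression: (i) that $f_\mu$ transfers to a stable cuspidal function $f_\mu^{G'}$ on each endoscopic group and that its spectral evaluation is concentrated on the single parameter $\phi(\mu,\lambda)$, (ii) that on this concentrated parameter the inversion adjoint relations of Proposition~\ref{prop:trsfr} really do produce the factor $\sum_{\pi\in\Pi_\phi}\Delta(\phi',\pi)$ that combines with Harish-Chandra's averaged character $S\Phi_{M'}(\phi',\delta)$, and (iii) that no contribution survives from $\delta\in\Delta(M',V,\zeta)\setminus\{M'(\Q)\}_{\sss,\elll}$. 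The first two points rely on the explicit form of Shelstad's spectral transfer factors reviewed in Section~\ref{sec:tc} and on the identity $q(G)\equiv q(G')\pmod 2$; the third point is the archimedean analogue of Arthur's vanishing theorem, which here is available in the $K$-group setting thanks to Proposition~\ref{prop:compa}.
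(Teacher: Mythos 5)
Your proposal follows essentially the same route as the paper: rewrite $\mathcal{L}_\mu(h)=I(f_\mu h)$ via the Euler characteristic identity, invoke Proposition~\ref{prop:cmf} for the stable cuspidal test function $f_\mu$, substitute the explicit formula for $S^{G'}_{M'}(\delta,f_\mu)$ from Theorem~\ref{theorem:smt}, evaluate $f^{G'}_\mu(\phi')$ as concentrated on $\phi(\mu,\lambda)$, use stability of $f_\mu^{G'}$ to restrict to semisimple elliptic $\delta$ with $b^{M'}(\delta)=\tau(M')$, and bundle the remaining constants into $F_\mu(M')$. The argument is correct and matches the paper step for step.
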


\end{document}